\numberwithin{equation}{section}
\newtheorem*{property*}{Property \csname @currentlabel\endcsname}
\newtheorem{theorem}{Theorem}[section]
\newtheorem{lemma}[theorem]{Lemma}
\newtheorem{proposition}[theorem]{Proposition}
\newtheorem{corollary}[theorem]{Corollary}
\newtheorem{conjecture}[theorem]{Conjecture}
\theoremstyle{definition}
\newtheorem{remark}[theorem]{Remark}
\newtheorem*{ack}{Acknowledgement}
\theoremstyle{remark}
\newenvironment{romenumerate}{\begin{enumerate}% gives (i), (ii) etc.
 }{\end{enumerate}}
\newcounter{oldenumi}
\newcounter{thmenumerate}
\newenvironment{thmenumerate}
{\setcounter{thmenumerate}{0}%
 \def\item{\par% \ifnum\thethmenumerate=0\else\par\fi %\noindent\fi
 \refstepcounter{thmenumerate}\textup{(\roman{thmenumerate})\enspace}}
}
{}
\newcounter{xenumerate}   %no left indentation; thus wider lines
\newcommand\pfitem[1]{\par(#1):}
\newcommand{\refT}[1]{Theorem~\ref{#1}}
\newcommand{\refC}[1]{Corollary~\ref{#1}}
\newcommand{\refL}[1]{Lemma~\ref{#1}}
\newcommand{\refR}[1]{Remark~\ref{#1}}
\newcommand{\refS}[1]{Section~\ref{#1}}
\newcommand{\refP}[1]{Proposition~\ref{#1}}
\newcommand{\refApp}[1]{Appendix~\ref{#1}}
\newcommand{\refTab}[1]{Table~\ref{#1}}
\newcommand{\refand}[2]{\ref{#1} and~\ref{#2}}
\xdef\klockan{\the\count1.0\the\count255}
\xdef\klockan{\the\count1.\the\count255}\fi
\DeclareMathOperator*{\sumx}{\sum\nolimits^{*}}
\newcommand\set[1]{\ensuremath{\{#1\}}}
\newcommand\bigpar[1]{\bigl(#1\bigr)}
\newcommand\Bigpar[1]{\Bigl(#1\Bigr)}
\newcommand\biggpar[1]{\biggl(#1\biggr)}
\newcommand\lrpar[1]{\left(#1\right)}
\newcommand\Bigabs[1]{\Bigl|#1\Bigr|}
\def\rompar(#1){\textup(#1\textup)}    % usage: \rompar(...)
\newcommand\parfrac[2]{\Bigpar{\frac{#1}{#2}}}
\newcommand\parfracc[2]{\biggpar{\frac{#1}{#2}}}
\newcommand\parfracx[2]{\lrpar{\frac{#1}{#2}}}
\def\xexp(#1){e^{#1}}
\newcommand\ceil[1]{\lceil#1\rceil}
\newcommand\ntoo{\ensuremath{{n\to\infty}}}
\newcommand\norm[1]{\|#1\|}
\newcommand\Bignorm[1]{\Bigl\|#1\Bigr\|}
\newcommand\downto{\searrow}
\newcommand\ie{i.e.\spacefactor=1000}
\newcommand\eg{e.g.\spacefactor=1000}
\newcommand\cf{cf.\spacefactor=1000}
\newcommand\whp{{w.h.p.\spacefactor=1000}}
\newcommand{\tend}{\longrightarrow}
\newcommand\dto{\overset{\mathrm{d}}{\tend}}
\newcommand\pto{\overset{\mathrm{p}}{\tend}}
\newcounter{CC}
\newcommand\E{\operatorname{\mathbb E{}}}
\renewcommand\P{\operatorname{\mathbb P{}}}
\newcommand\Var{\operatorname{Var}}
\newcommand\Cov{\operatorname{Cov}}
\newcommand\Po{\operatorname{Po}}
\newcommand\Bo{\operatorname{Bo}}
\newcommand\AsN{\operatorname{AsN}}
\newcommand\ga{\alpha}
\newcommand\gD{\Delta}
\newcommand\gam{\gamma}
\newcommand\gk{\varkappa}
\newcommand\gl{\lambda}
\newcommand\gs{\sigma}
\newcommand\gss{\sigma^2}
\newcommand\eps{\varepsilon}
\newcommand\cA{\mathcal A}
\newcommand\cC{\mathcal C}
\newcommand\cG{\mathcal G}
\newcommand\tM{{\tilde M}}
\newcommand\tS{{\tilde S}}
\newcommand\tV{{\tilde V}}
\newcommand\tW{{\tilde W}}
\newcommand\ett[1]{\boldsymbol1[#1]} 
\def\[#1]{[\![#1]\!]}
\newcommand\qq{^{1/2}}
\newcommand\qqq{^{1/3}}
\newcommand\qqa{^{2/3}}
\newcommand\qqaw{^{-2/3}}
\newcommand\qqc{^{3/2}}
\newcommand\qqw{^{-1/2}}
\newcommand\qqqw{^{-1/3}}
\newcommand\qw{^{-1}}
\newcommand\qww{^{-2}}
\renewcommand{\=}{:=}
\newcommand\oi{[0,1]}
\newcommand\dd{\,\textup{d}}
\newcommand{\pgf}{probability generating function}
\newcommand\sus{\chi}
\newcommand\cc[1]{\mathcal C_{#1}}
\newcommand\cci{\cc{i}}
\newcommand\ccj{\cc{j}}
\newcommand\ccc[1]{|\cc{#1}|}
\newcommand\ccci{\ccc{i}}
\newcommand\cccj{\ccc{j}}
\newcommand{\bigrestr}[1]{\big|_{#1}}
\newcommand\sumiK{\sum_{i=1}^K}
\newcommand\sumlki{\sum_{l=1}^{k-1}}
\newcommand\sumlkii{\sum_{l=2}^{k-2}}
\newcommand\summki{\sum_{m=1}^{k-1}}
\newcommand\sumij{\sum_{i\neq j}}
\newcommand\s[1]{S_{#1}}
\newcommand\sk{\s k}
\newcommand\skl{\s {k,l}}
\newcommand\skt{\sk(t)}
\newcommand\slt{\s l(t)}
\newcommand\sklt{\skl(t)}
\newcommand\es[1]{s_{#1}}
\newcommand\esk{\es k}
\newcommand\esl{\es l}
\newcommand\eskt{\esk(t)}
\newcommand\eslt{\esl(t)}
\newcommand\ts[1]{\tS_{#1}}
\newcommand\tsk{\ts k}
\newcommand\tskt{\tsk(t)}
\newcommand\tvk{\tV_k}
\newcommand\tmk{\tM_k}
\newcommand\tmkt{\tmk(t)}
\newcommand\ttM{\Tilde{\tM}}
\newcommand\ttmk{\ttM_k}
\newcommand\ttmkt{\ttmk(t)}
\newcommand\ttW{\Tilde{\tW}}
\newcommand\gnp{\ensuremath{G(n,p)}}
\newcommand\gnm{\ensuremath{G(n,m)}}
\newcommand\gnt{\ensuremath{\cG(n,t)}}
\newcommand\et{e^{-t}}
\newcommand\intot{\int_0^t}
\newcommand\qmk{[M_k,M_k]}
\newcommand\qmkt{\qmk_t}
\newcommand\qtmk{[\tM_k,\tM_k]}
\newcommand\qtmkt{\qtmk_t}
\newcommand\qttmk{[\ttM_k,\ttM_k]}
\newcommand\qttmkt{\qttmk_t}
\newcommand\subp{_{\mathrm{p}}}
\newcommand\op{o\subp}
\newcommand\Op{O\subp}
\newcommand\simp{\sim\subp}
\newcommand\untx{\frac1{1-nt}}
\newcommand\unt{\Bigpar{\untx}}
\newcommand\untz{\bigpar{\untx}}
\newcommand\unux{\frac1{1-nu}}
\newcommand\unu{\Bigpar{\unux}}
\newcommand\ulx{\frac1{1-\gl}}
\newcommand\ul{\Bigpar{\ulx}}
\newcommand\unyx{\frac1{1-y}}
\newcommand\uny{\Bigpar{\unyx}}
\newcommand\bax[1]{B_{#1}}
\newcommand\ba{\bax{\gl}}
\newcommand\hba{\widehat B_{\gl}}
\newcommand\hbnt{\widehat B_{nt}}
\newcommand\psitl{\psi(t;\gl)}
\newcommand\TT{T(\gl e^{-\gl}e^t)}
\newcommand\xl{_{L^1}}
\newcommand\OL{O_{L^1}}
\newcommand\OLii{O_{L^2}}
\newcommand\OLp{O_{L^p}}
\newcommand\olp{o_{L^p}}
\newcommand\ol{o_{L^1}}
\newcommand\yx[1]{Y_{#1}}
\newcommand\xkt{\yx k(t)}
\newcommand\xlt{\yx l(t)}
\newcommand\mn{M^{(n)}}
\newcommand\tnx{t_n(x)}
\newcommand\pp[1]{p_{#1}}
\newcommand\ppk{\pp k}
\newcommand\pq[1]{q_{#1}}
\newcommand\pqk{\pq k}
\newcommand\pr[1]{r_{#1}}
\newcommand\prm{\pr m}
\newcommand\px[1]{\tilde P_{#1}}
\newcommand\pxk{\px k}
\newcommand\py[1]{\tilde Q_{#1}}
\newcommand\pyk{\py k}
\newcommand\qm{\hat P}
\newcommand\qmkk{\qm_{k}}
\newcommand\qmkl{\qm_{k,l}}
\newcommand\pc[1]{\qm_{#1}}
\newcommand\qc[1]{Q_{#1}}
\newcommand\pmkj{\bar P_{k,j}}
\newcommand\pdx{p^*}
\newcommand\pd[1]{\pdx_{#1}}
\newcommand\pdk{\pd k}
\newcommand\pdl{\pd l}
\newcommand\hpkl{\hpxx{k}{l}}
\newcommand\hpxx[2]{P_{#1,#2}}
\newcommand\skkm{S_{k_1,\dots,k_m}}
\newcommand\aaa{|A|}
\newcommand\cx{c}
\newcommand\ckl{\cx_{k,l}}
\newcommand\nap{(n-\ceil{\ga n},p)}
\newcommand\sknp{\sk(n,p)}
\newcommand\sknap{\sk\nap}
\newcommand\nna{n-\ceil{\ga n}}
\newcommand\ct{C^\tau}
\newcommand\Xitau{\Xi^{(\tau)}}
\newcommand\Latau{\Lambda^{(\tau)}}
\newcommand\CS{Cauchy--Schwarz}
\newcommand\CSineq{\CS{} inequality}
\newcommand{\Takacs}{Tak\'acs}
\newcommand\ER{Erd\H os--R\'enyi}
\newcommand{\maple}{\texttt{Maple}}
\begin{document}
\title%[]
{Susceptibility in subcritical random graphs}

\date{31 May, 2008} % (typeset \today{} \klockan)} %; revised ...

\author{Svante Janson}
\address{Department of Mathematics, Uppsala University, PO Box 480,
SE-751~06 Uppsala, Sweden}
\email{svante.janson@math.uu.se}
\urladdr{http://www.math.uu.se/\~{}svante/}

\author{Malwina J. Luczak}
\address{Department of Mathematics, London School of Economics,
  Houghton Street, London WC2A 2AE, United Kingdom}
\email{malwina@planck.lse.ac.uk}
\urladdr{http://www.lse.ac.uk/people/m.j.luczak@lse.ac.uk/}

\keywords{susceptibility, law of large numbers, central limit theorem}
\subjclass[2000]{05C80, 60C05, 60F05} 
%{60C05 (68P10,68W40)} %%{Primary: <subject>; Secondary: <subject>}

\begin{abstract} 
We study the evolution of the susceptibility in the subcritical random
graph $G(n,p)$ as $n$ tends to infinity. We obtain precise asymptotics
of its expectation and variance, and show it obeys a law of large
numbers. We also prove that the scaled fluctuations of the susceptibility
around its deterministic limit converge to a Gaussian law. We further
extend our results to higher moments of the component size of a
random vertex, and prove that they are jointly asymptotically normal.  
\end{abstract}

\maketitle

\section{Introduction}\label{S:intro}

The \emph{susceptibility} $\sus(G)$ of a graph $G$ (deterministic or
random)
%\marginal{$\sus$ not so good since usually means chromatic number in
%graph theory, but it is standard in physics}
is defined as the mean size of the component containing a random
vertex. (As is well known, for random graphs of the random-cluster model, 
this, or rather its expectation, corresponds to the magnetic
susceptibility in Ising and Potts models.) 
If $G$ has $n$ vertices and 
components $\cc1,\dots,\cc K$, where $K$ is the number of
components, then thus
\begin{equation}
  \sus(G)=\sumiK \frac{\ccci}{n}\ccci
=\frac1n\sumiK \ccci^2.
\end{equation}
We define, for integers $k\ge1$,
\begin{equation}\label{skg}
\sk(G)\=\sumiK \ccci^k.
\end{equation}
Thus $\sus(G)=n\qw\s2(G)$, and similarly $n\qw\s{m+1}$ is the $m$th
moment of the size of the component containing a random vertex.
(Note that by choosing a uniform random vertex, we bias the components
by their sizes. The mean size of a uniformly chosen random component
is $n/K$, which is different and which will not be treated here.)

The purpose of this paper is to study $\sus(\gnp)$, or equivalently
$\s2(\gnp)$ for the standard \ER{}
random graph $\gnp$ with $n$ vertices where 
each possible edge appears with probability $p$, independently of all
other edges; we will also give extensions to $\sk(\gnp)$ for larger
$k$.

We consider asymptotics as $\ntoo$, with $p=p(n)$ a function of $n$.
(All unspecified limits are as \ntoo.) 

It is well-known, see \eg{} 
\citet{Bollobas} and \citet{JLR}, that if $np$ is a little larger than
1, $np-1\gg n\qqqw$ to be precise, then \gnp{} has \whp{} a giant component
which is much larger than the others (the \emph{supercritical case}).
It is easily seen that then the giant component will dominate all
other terms in the sum \eqref{skg}; 
hence, if the largest component is $\cC_1$,
then $S_k(\gnp)=(1+\op(1))\ccc1^k$ and
$\sus(\gnp)=(1+\op(1))\ccc1^2/n$.
See \refApp{Asuper} for a more precise statement (and proof).

Similarly, if $np=1+O(n\qqqw)$ (the \emph{critical case}), then there
are several components of the order $n^{2/3}$; in this case $S_k$ will
be of order $n^{2k/3}$, and thus $\sus$ of order $n\qqq$, and it
follows from \citet{Aldous}
that these quantities, properly
normalized, converge in distribution to some random variables but not
to constants. 
See \refApp{Acritical} for details.

In this paper we therefore concentrate on the case $np<1$, and in
particular $1-np\gg n\qqqw$ (the \emph{subcritical case}).
We will prove the following results for $\sus(\gnp)$, together with
similar results for $\sk(\gnp)$ stated later.

We use $\Op$ and $\op$ in the standard sense,
see \eg{} \cite[pp.\ 10--11]{JLR}, and write $X_n\simp a_n$ for
$X_n=a_n+\op(a_n)$ or, equivalently, $X_n/a_n\pto1$.
We will also write $X_n=\OLp(a_n)$ if
$\norm{X_n}_{L^p}\=(\E|X_n|^p)^{1/p}=O(a_n)$, and, 
similarly, $X_n=\olp(a_n)$ if $\norm{X_n}_{L^p}=o(a_n)$.
(Here, $X_n$ and $a_n$ are sequences of random variables and positive
numbers.)

\begin{theorem}  \label{T1}
Uniformly, for all $n\ge1$ and $0\le p<n\qw$,
  \begin{align}
\E\sus(\gnp)&=\frac1{1-np}\Bigpar{1+O\Bigpar{\frac1{n(1-np)^3}}},
\label{t1e}
\\
\Var\sus(\gnp)&=O\Bigpar{\frac1{n(1-np)^5}},
\label{t1var}
\intertext{and}
\sus(\gnp)&=\frac1{1-np}\Bigpar{1+\Op\Bigpar{\bigpar{n(1-np)^3}\qqw}}.
\label{t1sus}
  \end{align}
In particular, if\/ $1-np\gg n\qqqw$, then $\sus(\gnp)\simp 1/(1-np)$.
\end{theorem}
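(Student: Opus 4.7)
The plan is to establish the expectation bound~\eqref{t1e} and the variance bound~\eqref{t1var} separately, and then derive the probabilistic bound~\eqref{t1sus} and the stated law of large numbers from them by Chebyshev's inequality. Throughout I use the identity $n\sus(\gnp) = \s2(\gnp) = \sum_v|\cC(v)|$, so that by exchangeability of vertices $\E\sus(\gnp) = \E|\cC(1)|$.

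For \eqref{t1e}, I would analyse $|\cC(1)|$ through the breadth-first exploration of $\cC(1)$: each vertex removed from the queue exposes $\Bi(n-R, p)$ new neighbours, where $R$ is the number of currently revealed vertices. Sandwiching this exploration between Galton--Watson branching processes with offspring $\Bi(n-1,p)$ from above and $\Bi(n-1-K,p)$ from below (the latter coupling valid until $R$ exceeds $K$), for a cut-off $K$ of order $(1-np)\qww$, yields
\begin{equation*}
(1-(n-1)p)\qw - O\bigl((n(1-np)^3)\qw\bigr) \le \E|\cC(1)| \le (1-(n-1)p)\qw .
\end{equation*}
Expanding $(1-(n-1)p)\qw = (1-np+p)\qw = (1-np)\qw\bigl(1+O\bigl(p/(1-np)\bigr)\bigr)$ then gives \eqref{t1e}.

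For \eqref{t1var}, set $a = \E|\cC(1)|$, $b = \E|\cC(1)|^2$ and $c = \E\bigl[|\cC(1)||\cC(2)|\bigr]$. Expanding $\sus(\gnp)^2 = n\qww\sum_{v,w}|\cC(v)||\cC(w)|$ and using exchangeability of $(1,2)$ gives $\Var\sus(\gnp) = n\qw(b-c) + \Cov(|\cC(1)|,|\cC(2)|)$. The first summand is $O((n(1-np)^3)\qw)$ via the standard variance bound $\Var|\cC(1)| = O((1-np)^{-3})$ on the subcritical branching-process total progeny, and is absorbed into the claimed order. For the covariance, decompose according to whether $1\leftrightarrow 2$: when $1\leftrightarrow 2$, exchangeability within components gives
\begin{equation*}
\E\bigl[|\cC(1)|^2\mathbf1\{1\leftrightarrow 2\}\bigr] = (n-1)\qw\E\bigl[|\cC(1)|^2(|\cC(1)|-1)\bigr] = O\bigl((n(1-np)^5)\qw\bigr),
\end{equation*}
using $\E|\cC(1)|^3 = O((1-np)^{-5})$. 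When $1\not\leftrightarrow 2$, condition on $\cC(1)$; the induced subgraph on $V\setminus\cC(1)$ is distributed as $G(n-|\cC(1)|,p)$, so \eqref{t1e} applied to it yields $\E\bigl[|\cC(2)|\bigm|\cC(1), 1\not\leftrightarrow 2\bigr] = (1-np+|\cC(1)|p)\qw$ plus lower-order corrections, and a first-order Taylor expansion in $|\cC(1)|p$ combined with exact cancellation against the $a^2$ term leaves an error of the same order.

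Combining \eqref{t1e} and \eqref{t1var} via Chebyshev's inequality then yields \eqref{t1sus}: the standard deviation $(n(1-np)^5)\qqw$ equals $(1-np)\qw\cdot(n(1-np)^3)\qqw$, while the bias from \eqref{t1e} is of smaller order whenever $n(1-np)\to\infty$, in particular whenever $n(1-np)^3\to\infty$. The law of large numbers is then immediate: when $1-np\gg n\qqqw$, the error factor $(n(1-np)^3)\qqw$ tends to zero, hence $(1-np)\sus(\gnp)\pto 1$. I expect the main technical obstacle to be extracting the precise cancellation in the covariance estimate to reach the sharp order $(n(1-np)^5)\qw$: a naive bound via $\E|\cC(1)|^2 = O((1-np)^{-3})$ and Cauchy--Schwarz would only give $\Var\sus(\gnp) = O((1-np)^{-3})$, off by a factor of $n(1-np)^2$, and one must carefully exploit the cancellation between the $|\cC(1)|p(1-np)^{-2}$ correction in $\E\bigl[|\cC(2)|\bigm|\cC(1),1\not\leftrightarrow 2\bigr]$ and the moment structure of $|\cC(1)|$.
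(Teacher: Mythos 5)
Your route is genuinely different from the paper's. The paper works with the continuous-time process $\gnt$, writes $S_k(t)$ minus its accumulated drift as a martingale, and turns the drift identity into differential inequalities for $\E S_k(t)$ (Lemmas \ref{L1} and \ref{Lesk}, Theorem \ref{Tes2}), which give \eqref{t1e}; its proof of \eqref{t1var} is then a one-line negative-correlation argument: $\E S_{2,2}(t)\le(\E S_2(t))^2$ by conditioning on one component plus monotonicity, whence $\Var S_2(t)\le \E S_4(t)=O\bigpar{n(1-nt)^{-5}}$, with no cancellation analysis at all. Your covariance computation --- conditioning on $\cC(1)$, using that the rest is $G(n-|\cC(1)|,p)$, and expanding $\E|\cC(2)|$ in $|\cC(1)|p$ --- is instead close in spirit to the paper's sharper Theorem \ref{Tvar2} in Section \ref{Svar2}, where Lemma \ref{Ladd} plays exactly the role of your Taylor expansion; carried out carefully it even yields the constant $2p/(1-np)^5$ of Theorem \ref{T2}, which the paper's short proof of \eqref{t1var} does not. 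That part of your plan is sound, with two provisos: you need $\E|\cC(1)|^3=O\bigpar{(1-np)^{-5}}$ (the analogue of Lemma \ref{Lesk} with $k=4$, available from your Galton--Watson domination) and the mean estimate uniformly in the number of vertices so that it may be applied to $G(n-|\cC(1)|,p)$; and no cancellation between the events $1\leftrightarrow2$ and $1\not\leftrightarrow2$ is actually needed --- each contributes $O\bigpar{1/(n(1-np)^5)}$ separately, provided you cancel against $a^2$ by writing $g(n-|\cC(1)|)-g(n)$, where $g(m)$ denotes $\E|\cC(v)|$ in $G(m,p)$, rather than against $1/(1-np)$.

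The one step that, as written, does not deliver the claimed precision is the lower half of your sandwich for \eqref{t1e}. The coupling with the $\Bi(n-1-K,p)$ process is valid only while at most $K$ vertices are revealed, so you pay an overflow cost of order $\E\bigsqpar{T_-\ett{T_->K}}$, which for a subcritical progeny is only exponentially small in $(1-np)^2K$; with $K\asymp(1-np)\qww$ this is a constant (depending on the implied constant in $K$) times a negative power of $1-np$, far larger than the absolute error $O\bigpar{1/(n(1-np)^4)}$ that \eqref{t1e} permits, throughout the regime $n(1-np)^3\to\infty$ where the theorem has content. Enlarging $K$ to kill the overflow forces $K\gtrsim(1-np)\qww\log\bigpar{n(1-np)^3}$, and then the shift $Kp/(1-np)^2$ in the mean of the lower process exceeds the allowed error by that logarithmic factor; so the literal cut-off sandwich proves \eqref{t1e} only up to a log factor. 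A repair inside your framework is to bound the coupling defect directly rather than by shrinking the vertex pool: the expected number of collisions of the exploration with already revealed vertices is $O\bigpar{p\,\E|\cC(1)|^2}$, and each costs an expected missing subtree of size $O\bigpar{(1-np)\qw}$, giving $\E T_+-\E|\cC(1)|=O\bigpar{p\,\E|\cC(1)|^2/(1-np)}=O\bigpar{1/(n(1-np)^4)}$, which suffices. Finally, since the theorem is asserted uniformly for all $n\ge1$ and $0\le p<n\qw$, you should dispose of the range $n(1-np)^3=O(1)$ separately; as the paper notes, \eqref{t1e}--\eqref{t1sus} are trivial there, using $\E S_2\le n/(1-np)$ and $\sus\le n$.
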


One way to handle to explosion at $p=1/n$ is to consider
$1/\E\sus$ or $1/\sus$. 
In this form we can obtain uniform estimates for all $p$.

\begin{corollary}\label{C1}
Uniformly, for all $n\ge1$ and $0\le p\le 1$,
  \begin{align}
\frac1{\E\sus(\gnp)}&=(1-np)_++O\bigpar{n\qqqw},
\label{c1e}
\\
\frac1{\sus(\gnp)}&=(1-np)_++\Op\bigpar{n\qqqw}.
\label{c1sus}
  \end{align}
\end{corollary}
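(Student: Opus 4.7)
The plan is to combine \refT{T1} with a monotone coupling argument. Under the standard coupling $G(n,p_1)\subset G(n,p_2)$ for $p_1\le p_2$, the components of $G(n,p_2)$ are disjoint unions of those of $G(n,p_1)$, and merging two components of sizes $a$ and $b$ changes $\sum_i\ccci^2$ by $(a+b)^2-a^2-b^2=2ab\ge0$. Hence $\sus(\gnp)$ is a.s.\ non-decreasing in $p$, and so is $\E\sus(\gnp)$. Fix a large constant $K$ (to be chosen at the end) and split the range at $p^*=1/n-Kn^{-4/3}$, so that $1-np^*=Kn\qqqw$.

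For $1-np\ge Kn\qqqw$ (the deep subcritical range, where \refT{T1} has small relative error) I invert the expansions directly. Inverting \eqref{t1e} yields
\[
\frac1{\E\sus(\gnp)}=(1-np)+O\Bigpar{\frac1{n(1-np)^2}},
\]
and $1/(n(1-np)^2)\le K^{-2}n\qqqw$ in this range. Similarly, inverting \eqref{t1sus} on the event where its $\Op$-factor is bounded by a fixed constant (an event whose probability, by the uniformity of the $\Op$-estimate, can be made arbitrarily close to $1$ once that constant is taken large enough) gives
\[
\sus(\gnp)\qw=(1-np)+\Op\bigpar{n\qqw(1-np)\qqw}=(1-np)+\Op\bigpar{K^{-1/2}n\qqqw}.
\]

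For $p>p^*$ (which covers all of $p\ge 1/n$) one has $(1-np)_+<Kn\qqqw$, so it suffices to show that $1/\E\sus(\gnp)$ and $1/\sus(\gnp)$ are themselves $O(n\qqqw)$ in the appropriate sense. This is where monotonicity does the real work: $\E\sus(\gnp)\ge\E\sus(G(n,p^*))$ and, under the coupling, $\sus(\gnp)\ge\sus(G(n,p^*))$ a.s.; and applying the previous paragraph at $p^*$ gives $\E\sus(G(n,p^*))\ge n\qqq/(2K)$ and, with probability at least $1-\eps$, $\sus(G(n,p^*))\ge n\qqq/(2K)$, once $K$ has been taken large enough. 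Combined with $(1-np)_+<Kn\qqqw$, the triangle inequality then bounds both $|1/\E\sus(\gnp)-(1-np)_+|$ and (on the good event) $|1/\sus(\gnp)-(1-np)_+|$ by $3Kn\qqqw$.

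I do not anticipate a real obstacle: the argument is essentially a patching exercise, with monotonicity transporting the deep-subcritical estimate at $p^*$ across the critical threshold to all larger~$p$. The main care needed is in the uniformity of the $\Op$-statement across regimes and in choosing $K$ large enough to absorb the constants coming from \refT{T1}.
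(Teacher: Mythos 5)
Your proof is correct and follows essentially the same route as the paper's: invert the expansions of \refT{T1} in the regime $1-np\ge\text{const}\cdot n\qqqw$, then use monotonicity of $\sus$ in $p$ (the paper cites \refL{L0}; you re-derive it via the coupling and the $2ab$ computation) to transport the resulting lower bound on $\sus$ across the critical window to all larger $p$. The only cosmetic difference is that you spell out more explicitly how the $\Op$-statement is handled on a high-probability event.
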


The last statement of \refT{T1} can be sharpened to asymptotic normality.
We will also find the variance more precisely. 
We write $X_n\sim\AsN(\mu_n,\gss_n)$ if $(X_n)$ is a sequence of random
variables and $\mu_n$ and $\gs_n>0$ are real numbers such that
$(X_n-\mu_n)/\gs_n\dto N(0,1)$.

\begin{theorem} \label{T2}
If $p=p(n)<n\qw$ and further $1-np\gg n\qqqw$, then
  \begin{align*}
\sus(\gnp)\sim\AsN\Bigpar{\frac1{1-np},\;\frac{2p}{(1-np)^5}}
  \end{align*}
and 
$\Var\sus(\gnp)\sim 2p/(1-np)^5$.
\end{theorem}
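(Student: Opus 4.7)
The plan is to work with the continuous-time construction $\gnt$ in which each potential edge appears independently at rate $1$, so that $\gnt \eqd G(n, 1-e^{-t})$, and to prove a CLT for $\s2(\gnt)$ at $t = T := -\log(1-p)$. Writing $\sk(t) := \sk(\gnt)$, note that when a new edge joins two previously distinct components of sizes $a,b$, $\s2$ changes by $(a+b)^2 - a^2 - b^2 = 2ab$. This gives the Doob--Meyer decomposition
\[
\s2(t) = n + \int_0^t \bigpar{\s2(s)^2 - \s4(s)}\dd s + M_2(t),
\]
with $M_2$ a purely discontinuous martingale whose predictable quadratic variation is
\[
\langle M_2, M_2\rangle(t) = \int_0^t 2\bigpar{\s3(s)^2 - \s6(s)}\dd s.
\]

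Set $X(t) := \s2(t) - \E\s2(t)$. Expanding the centred drift as
\[
\s2(s)^2 - \s4(s) - \E\bigpar{\s2(s)^2 - \s4(s)} = 2\E\s2(s)\cdot X(s) + \bigpar{X(s)^2 - \Var\s2(s)} - \bigpar{\s4(s) - \E\s4(s)}
\]
and arguing that in the regime $1-np\gg n\qqqw$ the last two terms are of strictly smaller order than the first leads to the linear stochastic differential equation
\[
dX(t) = 2\,\E\s2(t)\,X(t)\dd t + dM_2(t) + dR(t)
\]
with a negligible remainder $R$. Solving via the integrating factor $\phi(t) := \exp\bigpar{-\int_0^t 2\E\s2(s)\dd s}$, and using $\E\s2(s)\sim n/(1-\gl(s))$ with $\gl(s) := n(1-e^{-s})$ to get $\phi(t)\sim(1-\gl(t))^2$, yields
\[
X(T) = \phi(T)\qw\int_0^T \phi(s)\dd M_2(s) + \op(\sigma_n), \qquad \sigma_n^2 := \frac{2pn^2}{(1-np)^5}.
\]

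The main term is a martingale with predictable quadratic variation $\phi(T)\qww\int_0^T\phi(s)^2\dd\langle M_2,M_2\rangle(s)$, which, using $\s3(s)/n \simp (1-\gl(s))^{-3}$ (the second moment of the Borel total-progeny distribution arising from the subcritical Galton--Watson approximation), integrates to $\sigma_n^2$ to leading order. Rebolledo's martingale CLT then gives $X(T)/\sigma_n \dto N(0,1)$, subject to the Lindeberg condition. For this, the jumps of $M_2$ are bounded by $2\ccc1^2$, and in the regime $1-np\gg n\qqqw$ the largest component $\ccc1$ of $\gnt$ for $t\le T$ is $\Op\bigpar{(1-np)\qww\log(n(1-np)^3)}$, so after truncation the jumps of the normalized stochastic integral are negligible on the scale $\sigma_n$.

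The main obstacle I anticipate is controlling the remainder $R$: both the $\s4$-fluctuation and the quadratic term $X^2 - \Var\s2$ contribute to $R$, and the integrating factor amplifies the resulting error by $\phi(T)\qw\sim(1-np)\qww$ at the terminal time. One therefore needs $L^2$ bounds on $R$ whose size decays like $(1-\gl(s))^4$, which should follow from a self-consistent bootstrap combined with the higher-moment versions of \refT{T1} for $\sk$ with $k\le 4$ that the paper announces.
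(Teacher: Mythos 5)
Your proposal is, in substance, the paper's proof. The integrating factor $\phi(t)=\exp\bigpar{-\int_0^t 2\E\s2(s)\dd s}\sim(1-nt)^2$ reproduces exactly the paper's rescaled process $\tS_2(t)\=(1-nt)^2\s2(t)$, whose associated martingale $\tM_2(t)=\int_0^t(1-ns)^2\dd M_2(s)$ is the stochastic integral your method isolates; your remainder $R$ is precisely what the paper writes as $\tV_2(t)+n^2=\bigpar{(1-nt)\s2(t)-n}^2-(1-nt)^2\s4(t)$, and it is controlled in $L^1$ from Theorems~\ref{Tvar} and~\ref{Tes2} (the bootstrap through $\sk$ for $k\le4$ that you anticipate is exactly \refL{Lesk} and \refT{Tesk}). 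The one small but genuine technical difference is the choice of martingale CLT: you invoke Rebolledo, so you must verify a Lindeberg condition, which you sketch via a bound on $\ccc1$ and the remark that the jumps of $M_2$ are at most $2\ccc1^2$ (this does work throughout $1-np\gg n\qqqw$, since the worst-case normalized jump is of order $\go\qqc\log^2\go$ with $1-np=\go n\qqqw$ and $\go\to\infty$); the paper instead uses \refP{P:JS}, a Jacod--Shiryaev--type CLT phrased in terms of the \emph{optional} quadratic variation $[\tM_2,\tM_2]_t=\sum_{u\le t}(\gD\tM_2(u))^2$, whose convergence in probability to a continuous deterministic limit (\refL{Lqtm}) already forces the jumps to vanish and so dispenses with the separate Lindeberg step. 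Either route is sound; the optional--QV formulation is a bit cleaner because it folds the jump condition into the convergence one is computing anyway. One cosmetic point: replacing the exact integrating factor by the deterministic $(1-nt)^2$, as the paper does, saves you from carrying the $\exp\bigpar{O(1/(n(1-nt)^3))}$ correction coming from the error term in \refT{Tes2}, although that correction is harmless under the standing hypothesis $n(1-nt)^3\to\infty$.
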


It follows easily from $\sus(\gnp)>0$ that 
the asymptotic normality in \refT{T2} cannot hold for
$1-np=O(n\qqqw)$. 

The proof of \refT{T1} (given in Sections \ref{Sexp}--\ref{Svar})
is fairly simple and is
based on studying how $\s k$ evolves for the 
\ER{} random
graph process \gnt{} (defined in  \refS{Sprel}). Heuristically, it is
easy to see that (ignoring the difference between a random variable
and its mean), $\sk$ ought to be an approximative solution to the 
differential equation $f'(t)=f^2(t)$, which (with the initial value
$f(0)=n$) is solved by $f(t)=n/(1-nt)$. We make this precise and
rigorous below.
This simple idea has presumably been noticed by several people, and at
least the leading terms in \eqref{t1e} and \eqref{t1sus} are more or
less known folk theorems.
However, we do not know of any rigorous treatments, except
\cite{SW} which uses the susceptibility to study a class of more complicated
random graph process. Their processes include the \ER{} process
studied here, so their results include the leading term asymptotics in
\eqref{t1e} and \eqref{t1sus} in the case where $p\le (1-\eps)/n$ for some
constant $\eps>0$. Their analysis involves branching processes
approximation, as well as differential equations, and seems contingent
on the fact that the component distribution (excluding the giant in
the supercritical case) has exponentially decaying tails.

The proof of \refT{T2} 
is more involved; the asymptotic normality is based on using a
martingale central limit theorem for a suitable modification of the
process $S_k(\gnt)$
(\refS{Snormal}), 
while the variance is estimated directly (\refS{Svar2}).

In \refS{Sborel}, the asymptotic results for $\sk$ are interpreted 
using the Borel distribution and its moments. 

\begin{remark}
  It is seen from \refT{T1} that the susceptibility blows up at
  $p=1/n$, which of course is another sign of the phase transition
  there, with the emergence of a giant component.
In fact, our results give a new proof that there is no giant
  component for smaller $p$. In the opposite direction, 
the explosion of the susceptibility at (or close to) $p=1/n$ shows
  that there are large components at that stage;
it is tempting  to conclude that a giant component emerges around this
instance (as we know by other argumants), but a formal proof based on
  this seems to require some additional work. See \citet{SW} where
  this type of arguments is used for a class of more complicated
  random graph processes. 
\end{remark}

\begin{remark}
An alternative approach to at least some of our results is to use
the standard branching process approximation of the neighbourhood
exploration process; this will be treated elsewhere.  
\end{remark}

\begin{remark}
In this paper we study the random graph \gnp. 
Most or all of our results transfer easily to the random graph \gnm{}
with a fixed number  of edges by monotonicity (\refL{L0}) and the
standard device of coupling \gnm{} with \gnp{} for a suitable $p$ such
that the expected number of edges is slightly smaller or larger
than $m$. We leave the details to the reader.
\end{remark}

\begin{ack}
This work was initiated during the programme
``Combinatorics and Statistical Mechanics'' at the Isaac Newton
Institute, Cambridge, 2008, where SJ was supported by a Microsoft fellowship.
\end{ack}

\section{Preliminaries}\label{Sprel}

We first note a simple monotonicity.
\begin{lemma}
  \label{L0}
If $H$ is a subgraph of $G$, then $\sk(H)\le\sk(G)$ for every $k\ge1$.  
\end{lemma}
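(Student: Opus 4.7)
The plan is to reduce to the case of a single added edge and then invoke the elementary superadditivity of $x \mapsto x^k$ on $\ooo$ for $k \ge 1$. Since $H$ is a subgraph of $G$, I can write a chain $H = G_0 \subset G_1 \subset \cdots \subset G_m = G$ where $G_{j+1}$ arises from $G_j$ by adding a single edge. It therefore suffices to check that $\sk$ is nondecreasing under the addition of one edge.

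Adding an edge $e = \{u,v\}$ either joins two distinct components of sizes $a$ and $b$ into a single component of size $a+b$, or it has no effect on the component structure (when $u,v$ already lie in the same component). In the second case $\sk$ is unchanged. In the first case the net change in $\sk$ is $(a+b)^k - a^k - b^k$, which is nonnegative for all $a,b \ge 0$ and all integers $k \ge 1$, since the binomial expansion of $(a+b)^k$ contains $a^k$ and $b^k$ together with nonnegative cross terms. Hence $\sk(G_j) \le \sk(G_{j+1})$ for every $j$, and chaining these inequalities yields $\sk(H) \le \sk(G)$.

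Equivalently, one could give a one-shot argument: each component $\mathcal{D}_j$ of $G$ is a disjoint union $\bigsqcup_{i \in I_j} \cci$ of components of $H$, so using $\bigpar{\sum_{i \in I_j} \ccci}^k \ge \sum_{i \in I_j} \ccci^k$ and summing over $j$ gives $\sk(G) = \sum_j |\mathcal{D}_j|^k \ge \sum_j \sum_{i \in I_j} \ccci^k = \sk(H)$. Either route is entirely elementary; there is no serious obstacle, as the only input is the superadditivity of $x^k$ on the nonnegative reals.
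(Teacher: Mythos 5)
Your argument is correct in spirit and matches the paper's approach (reduce to a single elementary modification and use superadditivity of $x\mapsto x^k$), but both of your formulations quietly assume that $H$ and $G$ have the same vertex set. In the paper, ``subgraph'' is used in the general sense, and the lemma is applied (in the proof of Lemma~\ref{L1}(i)) to compare $\cG(n-|A|,t)$ with $\cG(n,t)$, where the vertex sets genuinely differ. Your chain $H=G_0\subset\cdots\subset G_m=G$ by single edge additions only reaches $G$ if $H$ is a spanning subgraph; otherwise you must also allow a step that adds an isolated vertex. That step is harmless --- it creates a new singleton component and increases $\sk$ by $1^k=1$ --- but it needs to be said. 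Similarly, your one-shot argument claims each component $\mathcal D_j$ of $G$ is a \emph{disjoint union} of components of $H$; when $G$ has extra vertices, $\mathcal D_j$ merely \emph{contains} such a disjoint union, so the correct statement is $|\mathcal D_j|\ge\sum_{i\in I_j}\ccci$, from which the same chain of inequalities follows. The paper's own proof explicitly singles out both elementary operations (``adding a single edge or adding a single vertex''), and you should do the same; with that one-line addition either of your arguments is complete.
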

\begin{proof}
  It suffices to consider the case when $G$ is obtained from $H$ by
  either adding a single edge or adding a single vertx (and no edges);
  both cases are immediate.
\end{proof}

\smallskip
%\subsection{The random graph process}
The random graph process \gnt{} starts at $t=0$ with
$n$ vertices and no edges, and where edges are added randomly and
independently to every possible pair of vertices with rate 1, \ie, the
time edge $ij$ is added has an exponential distribution with mean 1.
Hence, at a given time $t$, each possible edge is present with
probability $1-\et$, so \gnt{} is a random graph $G(n,1-\et)$. We are
interested in the subcritical case where $t<1/n$; then the difference
between $1-\et$ and $t$ is 
$O(t^2)=O(n\qww)$ which is
negligible, and we can see $\gnt$ as a
convenient version of $G(n,t)$. More precisely, \gnp{} can be
obtained as $\cG(n,-\log(1-p))$;
this slight reparametrization is annoying but harmless, and it will be
convenient in the proofs below.

We write $\skt$ for $\sk(\gnt)$.
(These and other quantities introduced below depend on $n$, but we
choose not to show this explicitly in the notation.)

\smallskip

We further define, for a graph $G$ with components $\cci$ and $k,l\ge1$,
\begin{equation}\label{skl}
\skl(G)\=\sumij \ccci^k\cccj^l
=\sk(G)\s l(G)-\s{k+l}(G).
\end{equation}
We write $\sklt$ for $\skl(\gnt)$.

\section{The expectation}\label{Sexp}

We may and will assume that the edges are added to $\gnt$ at distinct times.
If a new edge joins two different components $\cci$ and $\ccj$ in
\gnt, then $\skt$ increases by a jump
\begin{equation}
  \label{dsk}
\gD\skt
=
\bigpar{\ccci+\cccj}^k-\ccci^k-\cccj^k
=\sumlki\binom kl\ccci^{l}\cccj^{k-l}.
\end{equation}
For each unordered pair $(i,j)$, the intensity of such jumps equals the
number of possible edges joining the two components, \ie~$\ccci\cccj$.
We consider ordered pairs of components and therefore divide this by
2, and summing over all pairs we find that the drift of $\skt$ is
\begin{equation}
  \label{v}
V_k(t)
\=
\sumij\frac12\ccci\cccj\sumlki\binom kl\ccci^{l}\cccj^{k-l}
=
\sumlki\frac12\binom kl\s{l+1,k+1-l}(t);
\end{equation}
in other words,
noting that $S_k(0)=n$,
\begin{equation}
  M_k(t)\=\skt-n-\intot V_k(u)\dd u
\end{equation}
is a martingale on $[0,\infty)$ with $M_k(0)=0$.
(Note that $M_k(t)$ %, as well as other martingales
is bounded for each fixed $n$ and $t$ in a finite interval $[0,T]$;
hence, there are no problems with integrability of this martingale.
The same holds for all similar martingales below.)

We define $\esk(t)\=\E\skt$, 
noting that $\esk(0)=n$, and conclude from the martingale property that
$\E M_k(t)=\E M_k(0)=0$ and thus
\begin{equation}\label{eskt}
  \eskt=\E S_k(t)=n+\intot\E V_k(u)\dd u.
\end{equation}

In order to use this, we need information on $\E\sklt$.

\begin{lemma}
  \label{L1}
For all $k,l\ge1$:
  \begin{romenumerate}
\item
$\E\sklt \le\eskt\eslt$,
\item
$\E\sklt \ge\eskt\eslt-\es{k+l}(t)$.	
  \end{romenumerate}
\end{lemma}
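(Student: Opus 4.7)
My plan is to reduce both inequalities to the identity
\[
\E\skl(t)=\E\bigl[\sk(t)\,\s l(t)\bigr]-\es{k+l}(t),
\]
which is immediate from~\eqref{skl} after taking expectations in $\gnt=G(n,1-\et)$. Part~(ii) then becomes the covariance bound $\E[\sk(t)\s l(t)]\ge\eskt\eslt$, while part~(i) becomes the upper bound $\E\skl(t)\le\eskt\eslt$, which I will prove directly by a combinatorial argument rather than going through the covariance.

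For part~(ii), the edge indicators in $\gnt$ are independent Bernoullis, and by \refL{L0} both $\sk(t)$ and $\s l(t)$ are monotone nondecreasing functions of these indicators. The Harris/FKG inequality immediately gives $\E[\sk(t)\s l(t)]\ge\eskt\eslt$, which rearranges to~(ii).

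For part~(i), I would expand as a sum of indicators over ordered tuples: for $\mathbf{v}=(v_1,\dots,v_k)\in V^k$ and $\mathbf{w}=(w_1,\dots,w_l)\in V^l$,
\[
\skl(\gnt)=\sum_{\mathbf{v},\mathbf{w}}\ett{E_{\mathbf{v},\mathbf{w}}},
\]
where $E_{\mathbf{v},\mathbf{w}}$ is the event that the $v_i$ lie in a common component $C_{\mathbf v}$, the $w_j$ lie in a common component $C_{\mathbf w}$, and $C_{\mathbf v}\ne C_{\mathbf w}$. On $E_{\mathbf{v},\mathbf{w}}$ the edge sets of $C_{\mathbf v}$ and $C_{\mathbf w}$ are disjoint, so a canonical (e.g.\ lexicographically smallest) spanning-tree witness connecting the $v_i$ inside $C_{\mathbf v}$, together with the analogous witness for the $w_j$ inside $C_{\mathbf w}$, furnishes edge-disjoint certificates for the increasing events $A_{\mathbf v}=\{v_1,\dots,v_k\text{ in the same component}\}$ and $B_{\mathbf w}$ (defined analogously). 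Hence $E_{\mathbf{v},\mathbf{w}}\subseteq A_{\mathbf v}\circ B_{\mathbf w}$, and the van den Berg--Kesten inequality gives $\P(A_{\mathbf v}\circ B_{\mathbf w})\le\P(A_{\mathbf v})\P(B_{\mathbf w})$. Summing over tuples, and using $\sum_{\mathbf v}\P(A_{\mathbf v})=\eskt$ and likewise for $\mathbf w$, yields $\E\skl(t)\le\eskt\eslt$.

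The only delicate step is the disjoint-occurrence bookkeeping; once a canonical witness is fixed, the proof reduces to quoting FKG and BK. If one prefers to avoid BK, an alternative is to condition on the vertex partition into components and exploit the product structure of intra-component edges, but the BK route is noticeably cleaner and directly matches the bound $\eskt\eslt$ on the nose.
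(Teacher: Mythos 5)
Your proposal is correct, and part (ii) is exactly the paper's argument: the identity \eqref{skl} plus Harris' inequality applied to $\skt$ and $\slt$, which are increasing functions of the independent edge indicators by \refL{L0}. Part (i), however, takes a genuinely different route. The paper writes $\sklt=\sum_{A\neq B}|A|^k|B|^l I_A(t)I_B(t)$, with $I_A(t)$ the indicator that $A$ is a component, conditions on $I_A(t)=1$, uses that the restriction of $\gnt$ to $[n]\setminus A$ is then distributed as $\cG(n-|A|,t)$, and bounds $\E S_l(\cG(n-|A|,t))\le\eslt$ by the monotonicity \refL{L0}; no correlation inequality beyond Harris (for part (ii)) is needed. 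You instead expand $\sklt$ over ordered vertex tuples and invoke the van den Berg--Kesten inequality: on $E_{\mathbf v,\mathbf w}$ the two components are distinct, hence vertex- and edge-disjoint, so spanning trees inside them give disjoint witnesses for the increasing events $A_{\mathbf v}$ and $B_{\mathbf w}$, whence $E_{\mathbf v,\mathbf w}\subseteq A_{\mathbf v}\circ B_{\mathbf w}$ and $\P(A_{\mathbf v}\circ B_{\mathbf w})\le\P(A_{\mathbf v})\P(B_{\mathbf w})$; summing, with $\sum_{\mathbf v}\P(A_{\mathbf v})=\eskt$, gives (i). This is sound (canonicity of the witness is not needed, only existence of disjoint witnesses, and tuples with $v_i=w_j$ contribute zero on the left), and it relies only on the product structure of the edge measure and the monotonicity of connection events, so it is somewhat more robust; the paper's argument is more elementary, trading BK for the self-similarity of the \ER{} graph (the conditional law of the complement of a component is again a graph of the same family) together with \refL{L0}. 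One small expositional point: you announce that both parts reduce to $\E\sklt=\E[\skt\slt]-\es{k+l}(t)$, but your proof of (i) does not in fact use that identity; only (ii) does.
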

\begin{proof}
\pfitem{i}
Let $\cA_n$  be the set of all non-empty subsets of $[n]$. If
$A\in\cA_n$, let $I_A(t)\=\ett{A \text{ is a component of }\gnt}$.
Thus,
\begin{equation*}
  \skt=\sum_{A\in\cA_n}|A|^k I_A(t)
\end{equation*}
and,
since $I_AI_B=0$ if $A\cap B\neq\emptyset$ but $A\neq B$,
\begin{equation}\label{a1}
  \sklt=\sum_{A\neq B}|A|^k|B|^l I_A(t)I_B(t)
=\sum_{A\in\cA_n}|A|^k I_A(t) \sum_{B\subseteq [n]\setminus A}|B|^l I_B(t).
\end{equation}
Conditioned on $I_A(t)=1$, the
conditional distribution of the restriction of \gnt{} to 
$[n]\setminus A$ is a random graph with the same distribution as
$\cG(n-|A|,t)$, apart from a relabelling of the vertices. Hence, using
also \refL{L0},
\begin{equation*}
\E\Bigl(\sum_{B\subseteq [n]\setminus A}|B|^l I_B(t)\Bigm|I_A(t)=1\Bigr)
=\E S_l(\cG(n-|A|,t))
\le \E S_l(\cG(n,t))
=\esl(t).  
\end{equation*}
Consequently, taking the expectation in \eqref{a1} yields
\begin{equation*}
\E  \sklt
\le
\E\sum_{A\in\cA_n}|A|^k I_A(t) \eslt
=\eskt\eslt.
\end{equation*}

\pfitem{ii}
By \eqref{skl},
\begin{equation*}
\E\skl(t)
=
\E\bigpar{\sk(t)\s l(t)}-\es{k+l}(t),
\end{equation*}
and it remains to show that 
$\E\bigpar{\sk(t)\s l(t)}\ge\eskt\eslt$, \ie, that $\skt$ and $\slt$
are positively correlated. This follows by Harris' inequality
(a special case of the FKG inequality), since
$\skt$ and $\slt$ are (by \refL{L0})
increasing functions of the edge indicators of
$\gnt$, and these are independent.  
\end{proof}

We use this first to find an upper bound for $\eskt$.
Combining \eqref{eskt}, \eqref{v} and \refL{L1}(i), we find
\begin{equation}\label{esk'}
  \esk'(t)=\E V_k(t) \le \sumlki\frac12\binom kl \es{l+1}(t)\es{k-l+1}(t).
\end{equation}
The first cases are
\begin{align}
  \es2'(t) & \le \es2(t)^2,
\label{es2'}
\\
  \es3'(t) & \le 3\es2(t)\es3(t),
\label{es3'}
\\
  \es4'(t) & \le 4\es2(t)\es4(t)+3\es3(t)^2.
\label{es4'}
\end{align}

Integrating \eqref{es2'}, with the initial value $\es2(0)=n$, we find,
\eg{} via $(1/\es2(t))'\ge-1$ and 
thus $1/\es2(t)\ge 1/n-t$,
\begin{equation}
  \label{es2}
\es2(t)\le \frac{n}{1-nt},
\qquad 0\le t<1/n.
\end{equation}
Next, \eqref{es3'} and \eqref{es2} yield
$\bigpar{(1-nt)^3\es3(t)}'\le0$ and thus, since $\es3(0)=n$,
\begin{equation}
  \label{es3}
\es3(t)\le \frac{n}{(1-nt)^3},
\qquad 0\le t<1/n.
\end{equation}
We can continue recursively and obtain the following bounds.
\begin{lemma}\label{Lesk}
  For every $k\ge2$, there exists a constant $C_k$ such that, for all
  $n$,
\begin{equation*}
%  \label{es2}
\E\sk(t)=
\esk(t)\le C_k\frac{n}{(1-nt)^{2k-3}},
\qquad 0\le t<1/n.
\end{equation*}
\end{lemma}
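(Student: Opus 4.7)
The plan is to prove the bound by induction on $k$. The base cases $k=2$ and $k=3$ are already established as~\eqref{es2} and~\eqref{es3}. For the inductive step, suppose the stated bound holds for every $j\in\{2,\dots,k-1\}$ with constants $C_2,\dots,C_{k-1}$. My starting point is the differential inequality~\eqref{esk'}. I would split the sum $\tfrac12\sum_{l=1}^{k-1}\binom{k}{l}\es{l+1}(t)\es{k-l+1}(t)$ into the two \emph{boundary} terms $l=1$ and $l=k-1$, which together contribute $k\es2(t)\esk(t)$ and still involve the unknown $\esk(t)$, and the \emph{middle} terms $l=2,\dots,k-2$, for which both indices $l+1$ and $k-l+1$ lie in $\{3,\dots,k-1\}$ so that the inductive hypothesis applies to each factor.

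Applying~\eqref{es2} to the boundary terms and the inductive bounds to each middle factor (so that every middle product is at most a constant multiple of $n^2/(1-nt)^{2k-2}$, since $2(l+1)-3+2(k-l+1)-3=2k-2$), I obtain a differential inequality of the form
\begin{equation*}
\esk'(t)\le\frac{kn}{1-nt}\,\esk(t)+\frac{A_k\,n^2}{(1-nt)^{2k-2}},\qquad A_k\=\tfrac12\sum_{l=2}^{k-2}\binom{k}{l}C_{l+1}C_{k-l+1},
\end{equation*}
where $A_k$ depends only on constants produced at earlier levels of the induction (and $A_3=0$, since its sum is empty).

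The main obstacle is choosing the right integrating factor so as to avoid a spurious logarithm. A naive attempt to multiply through by $(1-nt)^{2k-3}$ (matching the target exponent) overcorrects the linear term and leaves a residual proportional to $n^2/(1-nt)$, whose integral produces an unwanted factor of $\log(1/(1-nt))$. The correct choice is $e_k(t)\=(1-nt)^k\esk(t)$: the derivative of the prefactor exactly cancels the self-term $kn\esk(t)/(1-nt)$, leaving the clean estimate $e_k'(t)\le A_k n^2/(1-nt)^{k-2}$. For $k=3$ the right-hand side vanishes and the conclusion is immediate; for $k\ge4$ the power $k-2\ge2$ integrates cleanly to a bounded multiple of $(1-nt)^{-(k-3)}$, and after dividing by $(1-nt)^k$ and using $k\le2k-3$ to absorb the constant term, I obtain $\esk(t)\le C_k n/(1-nt)^{2k-3}$ with $C_k=1+A_k/(k-3)$, closing the induction.
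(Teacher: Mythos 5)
Your proof is correct and takes essentially the same route as the paper: the same splitting of the sum in \eqref{esk'} into boundary terms ($l=1,\,k-1$) and middle terms, the same application of the inductive hypothesis to the middle terms, and the same integrating factor $(1-nt)^k$ leading to the same constant $C_k=1+A_k/(k-3)$. Your aside about why $(1-nt)^{2k-3}$ fails as an integrating factor is a helpful explanation but does not alter the argument.
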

\begin{proof}
  We have proven this for $k=2$ and $3$.
For $k\ge4$ we use induction and assume that the lemma holds for
smaller values of $k$; then \eqref{esk'} yields, for some constants
$C_k'$ and $C_k''$, 
taking the terms $l=1$ and $l=k-1$ separately and using \eqref{es2},
\begin{equation*}
  \begin{split}
\esk'(t)
&\le k\es2(t)\esk(t)
+
\sum_{l=2}^{k-2}
C_k'
\frac{C_{l+1}n}{(1-nt)^{2l-1}}	
\frac{C_{k-l+1}n}{(1-nt)^{2k-2l-1}}	
\\&
\le 
\frac{kn}{1-nt}\esk(t)
+
\frac{C''_{k}n^2}{(1-nt)^{2k-2}}.	
  \end{split}
\end{equation*}
Hence, 
$\bigpar{(1-nt)^k\esk(t)}'\le C''_kn^2(1-nt)^{-(k-2)}$ and thus
\begin{equation*}
  \begin{split}
(1-nt)^k\esk(t)
&\le
n+\intot \frac{C''_{k}n^2}{(1-nu)^{k-2}}\dd u
\le
n+\frac{C''_kn}{(k-3)(1-nt)^{k-3}}
\\&
\le \frac{C_kn}{(1-nt)^{k-3}}.	
  \end{split}
\qedhere
\end{equation*}
\end{proof}

We write the estimate in \refL{Lesk} as $\esk(t)=O(n(1-nt)^{3-2k})$
where, as in all similar estimates below, the implicit constant may
depend on $k$ (and later sometimes $l$), but not on $n$ or $t$ (in the
given range $0\le t<1/n$).

We can now use this upper bound in a more or less repetition of the
same argument 
to obtain more precise estimates.
By Lemmas \refand{L1}{Lesk}, for $0\le t<1/n$,
\begin{equation*}
\E\sklt =\eskt\eslt+O(\es{k+l}(t))
%=\eskt\eslt+O\bigpar{n(1-nt)^{-2k-2l+3}},
=\eskt\eslt+O\biggpar{\frac{n}{(1-nt)^{2k+2l-3}}}.
%\qquad 0\le t<1/n.
\end{equation*}
Hence, \eqref{esk'} and \eqref{v} yield
\begin{equation}
  \label{esk'b}
\esk'(t)
=\E V_k(t)
=
\sumlki\frac12\binom kl \es{l+1}(t)\es{k-l+1}(t)
+O\biggpar{\frac n{(1-nt)^{2k+1}}}.
\end{equation}
The first cases are
\begin{align}
  \es2'(t) & = \es2(t)^2+O\bigpar{n(1-nt)^{-5}},
\label{es2'b}
\\
  \es3'(t) & = 3\es2(t)\es3(t)+O\bigpar{n(1-nt)^{-7}},
\label{es3'b}
\\
  \es4'(t) & = 4\es2(t)\es4(t)+3\es3(t)^2+O\bigpar{n(1-nt)^{-9}}.
\label{es4'b}
\end{align}

We first treat $\es2(t)$.
\begin{theorem}\label{Tes2}
  \begin{equation*}
  \E\s2(t)=\es2(t)=\frac n{1-nt}\Bigpar{1+O\Bigpar{\frac{nt}{n(1-nt)^3}}},
\qquad 0\le t<1/n.	
  \end{equation*}
\end{theorem}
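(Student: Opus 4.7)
The upper bound $\es2(t) \le n/(1-nt)$ is already supplied by \eqref{es2}, so the task reduces to a matching lower bound. My plan is a Gronwall-type argument driven by the refined identity \eqref{es2'b}; specifically, I will use its lower half
\[
\es2'(t) \ge \es2(t)^2 - Cn(1-nt)^{-5}
\]
for a suitable constant $C$ (depending only on the implicit constant in \eqref{es2'b}).

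To exploit this cleanly I rescale: set $g(t) \= (1-nt)\es2(t)/n$, so that $g(0)=1$ and, by \eqref{es2}, $g(t) \le 1$ throughout $[0,1/n)$. Differentiating and substituting the lower bound on $\es2'(t)$ yields
\[
g'(t) \ge \es2(t)\bigpar{g(t)-1} - C(1-nt)^{-4}.
\]
The key observation---and the main non-routine step---is that since $g(t)-1 \le 0$ and $\es2(t) \le n/(1-nt)$, the product $\es2(t)(g(t)-1)$ is bounded below by $(n/(1-nt))(g(t)-1)$. Writing $h(t)\=1-g(t) \ge 0$ with $h(0)=0$, this linearizes the differential inequality to
\[
h'(t) \le \frac{n h(t)}{1-nt} + \frac{C}{(1-nt)^{4}}.
\]

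The rest is routine: multiplying by the integrating factor $(1-nt)$ collapses the left side to $\bigpar{(1-nt)h(t)}'$ and leaves $C/(1-nt)^{3}$ on the right. Integrating from $0$ to $t$ and using the elementary estimate $(1-nt)^{-2}-1 \le 2nt/(1-nt)^{2}$ yields $(1-nt)h(t) = O\bigpar{t/(1-nt)^{2}}$, hence $h(t) = O\bigpar{t/(1-nt)^{3}}$. Unfolding the definition of $g$ gives $\es2(t) \ge (n/(1-nt))\bigpar{1 - O(t/(1-nt)^{3})}$, and combining this with \eqref{es2} produces the claimed two-sided estimate (noting that the stated error $nt/(n(1-nt)^{3})$ is identically $t/(1-nt)^{3}$).
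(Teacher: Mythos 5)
Your proof is correct, and it takes a genuinely different route from the paper's. The paper works with the reciprocal $1/\es2(t)$: from \eqref{es2'b} it gets $(1/\es2(t))'=-1+O\bigpar{1/(n(1-nt)^3)}$, but this requires an a priori lower bound $\es2(t)\ge\tfrac12 n/(1-nt)$, which is arranged via a stopping-time device ($T\=\inf\set{t:(1-nt)\es2(t)=n/2}$); one then shows $1-nT=O(n^{-1/3})$ and disposes of the remaining range $1-nt<An^{-1/3}$ trivially using \eqref{es2}. You instead linearize the Riccati-type inequality: using only the lower half of \eqref{es2'b} together with the a priori upper bound \eqref{es2}, the substitution $g(t)=(1-nt)\es2(t)/n$, the sign information $g\le1$ (which lets you replace $\es2(t)$ by $n/(1-nt)$ in the term $\es2(t)(g(t)-1)$), and an integrating-factor/Gronwall step yield the matching lower bound $\es2(t)\ge\frac n{1-nt}\bigpar{1-O(t/(1-nt)^3)}$ directly on all of $[0,1/n)$. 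What your argument buys is the absence of the bootstrap: no stopping time and no separate case $1-nt<An^{-1/3}$, since where the error term exceeds a constant the lower bound is vacuous anyway; what the paper's reciprocal transform buys is that it is the most direct realization of the heuristic $f'=f^2\iff(1/f)'=-1$ and gives the two-sided estimate in a single integration. Both arguments consume exactly the same inputs, namely \eqref{es2} and \eqref{es2'b} (and the same mild regularity of $\es2$ coming from \eqref{eskt}), so the difference lies in the comparison technique rather than in the underlying estimates.
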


\begin{proof}
  Let $T\=\inf\set{t:(1-nt)\es2(t)= n/2}$. 
Since $f(t)\=(1-nt)\es2(t)$ is continuous with $f(0)=n$ and
$f(1/n)=0$,
then
$0<T<1/n$ and for
  $0\le t\le T$ we have $\es2(t)\ge\tfrac12n/(1-nt)$ and thus, by
  \eqref{es2'b},
\begin{equation*}
\biggpar{\frac1{\es2(t)}}'
=-1+O\biggpar{\frac n{(1-nt)^5\es2(t)^2}}	
=-1+O\biggpar{\frac1{n(1-nt)^3}}.
\end{equation*}
This implies, recalling $\es2(0)=n$ and noting that
$\intot(1-nu)^{-3}\dd u=O(t/(1-nt)^2)$
(which is, like similar integrals below, perhaps simplest seen by
considering the cases $nt\le1/2$ and $nt\ge1/2$ separately),
\begin{equation}\label{fdag}
  \begin{split}
{\frac1{\es2(t)}}
&=\frac1n+\intot \Bigpar{\frac1{\es2(u)}}'\dd u
=\frac1n-t+O\Bigpar{\frac {t}{n(1-nt)^2}}	
\\&
=\frac{1-nt}{n}\Bigpar{1+O\Bigpar{\frac{nt}{n(1-nt)^3}}}
.%=\frac{1-nt}{n}\Bigpar{1+O\Bigpar{\frac1{n(1-nt)^3}}}.	
  \end{split}
\end{equation}
Taking here $t=T$, we find $1=O\bigpar{1/(n(1-nT)^3)}$, and thus
$n(1-nT)^3=O(1)$ or
$1-nT=O(n\qqqw)$.
%$1-nT\le cn\qqqw$ for some $c>0$. 
Choosing $A$ large enough, we see
that if $1-nt\ge An\qqqw$, then $t\le T$, and further the $O$ term in 
\eqref{fdag} is, in absolute value, less than $1/2$. Thus \eqref{fdag} yields
the result for $1-nt\ge An\qqqw$. The result for $1-nt<An\qqqw$
follows trivially from the bound \eqref{es2}.
\end{proof}

\refT{Tes2} proves \eqref{t1e} by the change of variable
$t=-\log(1-p)=p+O(p^2)$ 
as discussed in \refS{Sprel}, noting that the result is utterly
trivial for $1-np=O(n\qw)$.

We continue with higher $k$.

\begin{theorem}\label{Tesk}
The following holds for $0\le t<1/n$.
  \begin{align*}
  \E\s3(t)&=\es3(t)=\frac n{(1-nt)^3}\Bigpar{1+O\Bigpar{\frac{nt}{n(1-nt)^3}}},
\\
  \E\s4(t)&=\es4(t)
=\frac {n\bigpar{3-2(1-nt)}}{(1-nt)^5}\Bigpar{1+O\Bigpar{\frac{nt}{n(1-nt)^3}}}.
  \end{align*}
More generally, for every $k\ge2$ there exists a polynomial $\pp k$ of
degree $2k-3$ such that
\begin{equation}\label{tesk}
  \begin{split}
  \E\sk(t)=\esk(t)
&=n\ppk\Bigpar{\frac1{1-nt}}+O\Bigpar{\frac{nt}{(1-nt)^{2k}}}	
\\
&=n\ppk\Bigpar{\frac1{1-nt}}\Bigpar{1+O\Bigpar{\frac{nt}{n(1-nt)^3}}}.
  \end{split}
\end{equation}
We have $\pp2(x)=x$, $\pp3(x)=x^3$, $\pp4(x)=3x^5-2x^4$. In general,
for $k\ge3$, 
$\ppk(x)=x^k\pqk(x)$ for a polynomial $\pqk(x)$ of degree $k-3$ that is
recursively defined by $\pqk(1)=1$ and
\begin{equation}\label{qk1}
  \pqk'(x)=\frac12\sumlkii\binom kl \pq{l+1}(x)\pq{k-l+1}(x),
\qquad k\ge3.
\end{equation}
Equivalently,
$\ppk(1)=1$ and
\begin{equation}\label{pk1}
  \ppk'(x)=\frac1{2x^2}\sumlki\binom kl \pp{l+1}(x)\pp{k-l+1}(x),
\qquad k\ge2.
\end{equation}
\end{theorem}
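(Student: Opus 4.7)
The plan is to proceed by strong induction on $k$, with base case $k=2$ given by \refT{Tes2}. For the polynomial structure, setting $\pq k(x)\=\pp k(x)/x^k$ for $k\ge 3$, a direct manipulation shows that \eqref{pk1} together with $\pp k(1)=1$ is equivalent to \eqref{qk1} with $\pq k(1)=1$; an easy induction then gives that $\pq k$ is a polynomial of degree $k-3$, hence $\pp k$ has degree $2k-3$. The explicit values for $\pp 2$, $\pp 3$, $\pp 4$ are read off the recursion directly.

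For the inductive step ($k\ge 3$), write $y\=1/(1-nt)$ and assume \eqref{tesk} for all $j$ with $2\le j<k$, so that $\es j(t)=n\pp j(y)+O(nt\,y^{2j})$. Separating out the $l=1$ and $l=k-1$ terms in \eqref{esk'b}, which together contribute $k\es2(t)\esk(t)$, rewrites \eqref{esk'b} as the \emph{linear} ODE
\begin{equation*}
\esk'(t)=k\es2(t)\esk(t)+\tfrac12\sum_{l=2}^{k-2}\binom{k}{l}\es{l+1}(t)\es{k-l+1}(t)+O\bigpar{ny^{2k+1}}.
\end{equation*}
Let $f(t)\=n\pp k(y)$. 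Since $\tfrac{dy}{dt}=ny^2$, the recursion \eqref{pk1} and $\pp 2(x)=x$ together yield
\begin{equation*}
f'(t)=k\cdot ny\cdot f(t)+\tfrac{n^2}{2}\sum_{l=2}^{k-2}\binom{k}{l}\pp{l+1}(y)\pp{k-l+1}(y).
\end{equation*}
Subtracting, $R(t)\=\esk(t)-f(t)$ satisfies $R(0)=0$ and a linear first-order ODE
\begin{equation*}
R'(t)=k\es2(t)R(t)+k\bigpar{\es2(t)-ny}f(t)+D(t)+O\bigpar{ny^{2k+1}},
\end{equation*}
where $D(t)$ is the difference between the two sums, controlled by the inductive hypothesis.

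The ODE for $R$ is solved via the integrating factor $\mu(t)\=\exp\bigpar{-k\intot\es2(u)\dd u}$. From \refT{Tes2}, $\mu(t)=(1-nt)^k\bigpar{1+O(1/(n(1-nt)^3))}$ in the subcritical regime $1-nt\gg n\qqqw$, so $\mu(u)\mu(t)^{-1}\asymp\bigpar{(1-nu)/(1-nt)}^k$ there. Using $\es2(t)-ny=O(nty^4)$ from \refT{Tes2} together with $f(t)=O(ny^{2k-3})$, the principal driving terms are $O(n^2ty^{2k+1})$ and $O(ny^{2k+1})$, and elementary integration after the change of variable $s=nu$ yields $R(t)=O(nty^{2k})$, which matches the error term in \eqref{tesk}. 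In the complementary regime $1-nt=O(n\qqqw)$, the statement \eqref{tesk} is immediate from \refL{Lesk}, since the relative error there is $\Theta(1)$.

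The main obstacle is the quadratic cross-term in $D(t)$: from the product $\bigpar{n\pp{l+1}(y)+O(nty^{2l+2})}\bigpar{n\pp{k-l+1}(y)+O(nty^{2k-2l+2})}$ one picks up an error of size $O((nt)^2y^{2k+4})$, which is a priori larger than the target $O(nty^{2k})$ by a factor of $nty^4$. The saving is that $ty^3=t/(1-nt)^3\le 1/(n(1-nt)^3)=o(1)$ in the subcritical regime, so after integration against $\mu$ this cross-term contributes only $o(nty^{2k})$. Careful bookkeeping of the exponents across the several driving terms, and of the integrating-factor estimate, is the primary care required.
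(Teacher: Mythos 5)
Your proposal is correct and follows essentially the same route as the paper: an induction on $k$ that treats \eqref{esk'b} as a linear ODE for $\esk$ (after isolating the $l=1,\,k-1$ terms) and solves it by an integrating factor, using the inductive hypothesis to evaluate the inhomogeneous part. The one notable difference is bookkeeping: the paper first replaces $\es2(t)$ by $n/(1-nt)$, absorbing the discrepancy $\es2(t)-n/(1-nt)=O((1-nt)^{-4})$ times $\esk(t)=O(n(1-nt)^{3-2k})$ into the $O(n(1-nt)^{-(2k+1)})$ error, and then uses the explicit integrating factor $(1-nt)^k$, which yields estimates that are uniform over the whole range $0\le t<1/n$ in one stroke. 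You instead keep the exact integrating factor $\mu(t)=\exp(-k\intot\es2)$ and approximate it by $(1-nt)^k$; since that approximation only holds when $n(1-nt)^3\gg1$, you are then forced into the regime split (subcritical versus $1-nt=O(n^{-1/3})$, the latter handled via \refL{Lesk}). Both are fine, and your treatment of the quadratic cross-term in $D(t)$ — noting that $ty^3<1/(n(1-nt)^3)$ makes it a lower-order contribution after integration — is correct, but the paper's pre-substitution of $\es2\approx n/(1-nt)$ avoids the case distinction entirely.
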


A probabilistic interpretation of $\ppk(x)$ and a simpler recursion
formula are given in \refS{Sborel}.
The polynomials $\ppk$ for small $k$ are given in \refTab{Tabpk}.

\begin{table}[ht]
\begin{align*}
\pp2(x)&=x,
\\
\pp3(x)&={x}^{3}, \\
\pp4(x)&=3\,{x}^{5}-2\,{x}^{4}, \\
\pp5(x)&=15\,{x}^{7}-20\,{x}^{6}+6\,{x}^{5}, \\
\pp6(x)&=105\,{x}^{9}-210\,{x}^{8}+130\,{x}^{7}-24\,{x}^{6}, \\
\pp7(x)&=945\,{x}^{11}-2520\,{x}^{10}+2380\,{x}^{9}-924\,{x}^{8}+120\,{x}^{7}
, \\
\pp8(x)&=10395\,{x}^{13}-34650\,{x}^{12}+44100\,{x}^{11}-26432\,{x}^{10}+7308\,
{x}^{9}-720\,{x}^{8}.
\end{align*}
\caption{The polynomials $\ppk(x)$ for $k\le8$.} \label{Tabpk}
\end{table}

\begin{proof}
We have shown the result for $k=2$, with $p_2(x)=x$ which satisfies
\eqref{pk1}.
For larger $k$, we use induction
and assume that \eqref{tesk} is true for smaller values of $k$.
Then, by   \eqref{esk'b}, taking the terms $l=1$ and $l=k-1$
separately,
and \eqref{qk1},
\begin{equation*}
  \begin{split}
\esk'(t)
&=
k\es2(t)\esk(t)+
\sum_{l=2}^{k-2}\frac12\binom kl \es{l+1}(t)\es{k-l+1}(t)
+O\Bigpar{\frac n{(1-nt)^{2k+1}}}
\\&
=\frac {kn}{1-nt}\esk(t)+
n^2 \sum_{l=2}^{k-2}\frac12\binom kl \pp{l+1}\unt\pp{k-l+1}\unt
\\&
\hskip 18em
{}+O\Bigpar{\frac n{(1-nt)^{2k+1}}}
\\
&=\frac {kn}{1-nt}\esk(t)+
\frac{n^2}{(1-nt)^{k+2}}\pqk'\unt+O\Bigpar{\frac n{(1-nt)^{2k+1}}}.
  \end{split}
\end{equation*}
Thus,
\begin{equation*}
  \begin{split}
\bigpar{(1-nt)^k\esk(t)}'
&=
\frac{n^2}{(1-nt)^{2}}\pqk'\unt+O\Bigpar{\frac n{(1-nt)^{k+1}}}.
\\&
=n\frac{\dd}{\dd t}
\pqk\unt+O\Bigpar{\frac n{(1-nt)^{k+1}}}.
  \end{split}
\end{equation*}
The result follows by integration, recalling that $\esk(0)=n$. For the
second form in \eqref{tesk}, with the error term written
multiplicatively, we note also that it follows from the recursion
\eqref{qk1} that $\ppk$ has degree $2k-3$ with a positive leading term;
since further $\pqk$ and $\ppk$ are non-decreasing on $[1,\infty)$, for
  example by \eqref{qk1} again, and thus strictly positive there, it
  follows that
$\ppk(x)\asymp x^{2k-3}$ for $x\ge1$. 
%(meaning that the ratio $\ppk(x)/x^{2k-3}$ is  bounded above and below).
\end{proof}

\section{The variance}\label{Svar}

\begin{theorem}
  \label{Tvar}
For every $k\ge2$,
$\Var(\skt) \le \es{2k}(t)$.
Hence, 
\begin{equation*}
 \Var(\skt) = O\bigpar{n(1-nt)^{-(4k-3)}},
\qquad 0\le t<1/n.
\end{equation*}
\end{theorem}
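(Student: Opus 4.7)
The plan is to reduce the variance bound to information we already have on $\E S_k$ and $\E S_{2k}$, via the same indicator decomposition used in \refL{L1}. Writing $S_k(t) = \sum_{A\in\cA_n} |A|^k I_A(t)$ and squaring, the product $I_A I_B$ vanishes whenever $A\neq B$ and $A\cap B\neq\emptyset$, so only the diagonal and the disjoint off-diagonal terms survive. The diagonal contribution equals $\sum_A |A|^{2k}I_A(t) = S_{2k}(t)$, and the off-diagonal contribution is exactly $S_{k,k}(t)$ as defined in \eqref{skl}. Hence
\begin{equation*}
\E\bigpar{\skt^2} = \es{2k}(t) + \E S_{k,k}(t).
\end{equation*}

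Applying \refL{L1}(i) with $l=k$ gives $\E S_{k,k}(t)\le \esk(t)^2$, so
\begin{equation*}
\Var(\skt) = \E\bigpar{\skt^2} - \esk(t)^2 \le \es{2k}(t),
\end{equation*}
which is the first inequality. The second follows immediately by invoking \refL{Lesk} with $k$ replaced by $2k$: since $\es{2k}(t) \le C_{2k}\,n/(1-nt)^{2(2k)-3} = O\bigpar{n(1-nt)^{-(4k-3)}}$ uniformly for $0\le t<1/n$, the stated estimate is obtained.

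There is essentially no obstacle here: the key idea is just that the ``diagonal'' part of $S_k^2$ is precisely the larger moment $S_{2k}$ and the ``off-diagonal'' part is $S_{k,k}$, which is already controlled by the product of means via the FKG/disjointness argument of \refL{L1}. The only thing to be slightly careful about is the case $k=2$, where nothing special happens: the decomposition $S_2^2 = S_4 + S_{2,2}$ is the same and the bound $\E S_{2,2}(t)\le \es2(t)^2$ from \refL{L1}(i) applies directly.
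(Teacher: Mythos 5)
Your proof is correct and follows the same route as the paper: the decomposition $S_k^2 = S_{k,k} + S_{2k}$ (which the paper reads off directly from \eqref{skl} rather than re-deriving via the indicator sum), followed by \refL{L1}(i) to bound $\E S_{k,k}$ by $(\E S_k)^2$, and then \refL{Lesk} applied with $2k$ in place of $k$.
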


\begin{proof}
  By \eqref{skl} and \refL{L1}(i),
  \begin{equation*}
\E(\skt^2)
=\E\s{k,k}(t)+\E\s{2k}(t)
\le(\E\skt)^2+\es{2k}(t).
  \end{equation*}
The final estimate follows by \refL{Lesk}.
\end{proof}

A more precise result will be given in \refS{Svar2}. This will show
that the bound in \refT{Tvar} is of the right order as
long as $1-nt\gg n\qqqw$.

\begin{corollary}
  \label{Cvar}
If $1-nt\gg n\qqqw$, then
$\skt\simp n\ppk\unt$
for every $k\ge2$.
\end{corollary}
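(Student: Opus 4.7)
The plan is a short Chebyshev argument combining the expectation estimate in \refT{Tesk} with the variance bound in \refT{Tvar}. First I would note that, by \refT{Tesk} and the fact that $\ppk(x)\asymp x^{2k-3}$ for $x\ge 1$ (observed at the end of the proof of \refT{Tesk}), we have
\begin{equation*}
\E\skt \asymp \frac{n}{(1-nt)^{2k-3}},
\end{equation*}
and moreover the multiplicative error in \refT{Tesk} is $O\bigpar{1/(n(1-nt)^3)}$, which is $o(1)$ under the hypothesis $1-nt\gg n\qqqw$ (using $nt\le 1$ to bound the numerator $nt$ by $1$).

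Next I would apply Chebyshev's inequality, using \refT{Tvar}, to estimate
\begin{equation*}
\frac{\Var(\skt)}{(\E\skt)^2}
= O\biggpar{\frac{n(1-nt)^{-(4k-3)}}{n^2(1-nt)^{-(4k-6)}}}
= O\biggpar{\frac{1}{n(1-nt)^3}}
= o(1)
\end{equation*}
under the assumption $1-nt\gg n\qqqw$. Hence $\skt/\E\skt \pto 1$, i.e.\ $\skt=\E\skt\,(1+\op(1))$.

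Finally, combining this with the asymptotic expression $\E\skt = n\ppk\unt\,(1+o(1))$ from \refT{Tesk} immediately gives $\skt\simp n\ppk\unt$, completing the proof. There is no real obstacle here: both ingredients have already been established, and the only point worth being careful about is verifying that the single condition $1-nt\gg n\qqqw$ simultaneously kills the deterministic error in \refT{Tesk} and the Chebyshev error coming from \refT{Tvar}, which it does because both error terms reduce to $O\bigpar{1/(n(1-nt)^3)}$.
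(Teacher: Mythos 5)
Your proof is correct and follows essentially the same route as the paper: combine the expectation asymptotics from \refT{Tesk} with the variance bound from \refT{Tvar}, note $\Var(\skt)/(\E\skt)^2=O(1/(n(1-nt)^3))=o(1)$, and conclude by Chebyshev's inequality. The only cosmetic difference is that you spell out the $\ppk(x)\asymp x^{2k-3}$ step explicitly, which the paper leaves implicit.
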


\begin{proof}
  By \refT{Tesk}, $\E\skt\sim n\ppk\untz$. 
Further, Theorems \refand{Tvar}{Tesk} show that
\begin{equation*}
\frac{\Var(\skt)}{(\E\skt)^2} =
O\parfracc{n(1-nt)^{3-4k}}{n^2(1-nt)^{6-4k}}  
=O\parfracc{1}{n(1-nt)^{3}}  =o(1),
\end{equation*}
and the result follows by Chebyshev's inequality.
\end{proof}

\begin{proof}[Proof of \refT{T1}]
As remarked above, \eqref{t1e} follows from \refT{Tes2}.
Similarly, the case $k\ge2$ of \refT{Tvar} yields \eqref{t1var}.
Together, these estimates yield \eqref{t1sus} for $1-np\ge n\qqqw$;
in the remaining case $0<1-np<n\qqqw$, \eqref{t1sus} follows trivially
from the estimate $\E\sus(\gnp)\le1/(1-np)$, which follows from
\refL{Lesk} provided $1-np\ge 1/n$, and otherwise from the trivial
$\sus(\gnp)\le n$.   
\end{proof}

\begin{proof}[Proof of \refC{C1}] 
  Let $A>0$ be so large that the $O$ term in \eqref{t1e} is $\le 1/2$
  for $1-np\ge A n\qqqw$. Then 
\eqref{t1e} yields,  for $np\le 1-An\qqqw$,
\begin{equation*}
  \frac1{\E\sus(\gnp)}=(1-np)\Bigpar{1+O\Bigpar{\frac1{n(1-np)^3}}}
=1-np+O\bigpar{n\qqqw},
\end{equation*}
which shows \eqref{c1e} for these $p$.
In particular, for $np=1-An\qqqw$ we find
$1/\E\sus(\gnp)=O(n\qqqw)$. This, and thus \eqref{c1e}, then holds for
  all larger $p$ too by monotonicity (\refL{L0}).

The proof of \eqref{c1sus} is similar, using \eqref{t1sus}.
\end{proof}

\section{Asymptotic normality}\label{Snormal}

The quadratic variation of the martingale $M_k(t)$ is 
\begin{equation*}
 \qmkt \=\sum_{0<u\le t}\gD M_k(u)^2
=\sum_{0<u\le t}\gD S_k(u)^2,
\end{equation*}
where $\gD X(s)\=X(s)-X(s-)$ denotes the jump (if any) of a process $X$ at $s$.
(This formula holds because $M_k$ is a martingale with paths of finite
variation and $M_k(0)=0$; see 
\eg{} \cite{JS} for a definition for general (semi)martingales.)
Using \eqref{dsk}, we find, in analogy with \eqref{v}, that $\qmkt$
has drift
\begin{equation}
  \label{w}
  \begin{split}
W_k(t)
&\=
\sumij\frac12\ccci\cccj\lrpar{\sumlki\binom kl\ccci^{l}\cccj^{k-l}}^2
\\&
\phantom:
=
\sumlki\summki\frac12\binom kl\binom km\s{l+m+1,\;2k+1-l-m}(t);	
  \end{split}
\end{equation}
\ie, $\qmkt-\intot W_k(u)\dd u$ is a martingale.

It turns out to be advantageous to work with a slightly different
martingale.
In order to cancel some terms later on, we multiply $\skt$ by
$(1-nt)^k$
(\cf{} the proof of \refT{Tesk} where we did the same with the
expectation in order to simplify the differential equation); we thus define
\begin{equation}\label{tsk}
  \tskt\=(1-nt)^k\skt,
\end{equation}
which (by a simple instance of Ito's formula) has the drift
\begin{equation}  \label{tv}
  \tvk(t)\=(1-nt)^k V_k(t)-kn(1-nt)^{k-1} S_k(t).
\end{equation}
Thus,
\begin{equation}\label{tmkt}
  \tmkt\=\tskt-n-\intot\tvk(u)\dd u
\end{equation}
is a martingale with $\tmk(0)=0$.
The quadratic variation is
\begin{equation*}
 \qtmkt \=\sum_{0<u\le t}\gD \tM_k(u)^2
=\sum_{0<u\le t}\gD \tS_k(u)^2
=\sum_{0<u\le t}(1-nu)^{2k}\gD S_k(u)^2.
\end{equation*}
This has drift 
\begin{equation}
  \label{tw}
\tW_k(t)\=(1-nt)^{2k} W_k(t),
\end{equation}
and thus
\begin{equation}\label{ttmkt}
\ttmk(t)\=  \qtmkt-\intot \tW_k(u)\dd u
\end{equation}
is another martingale with $\ttmk(0)=0$.

We repeat the argument and find that $\ttM_k$ has quadratic variation
\begin{equation*}
  \begin{split}
 \qttmkt 
&\=\sum_{0<u\le t}\gD \ttM_k(u)^2
=\sum_{0<u\le t}(\gD \qtmk_u)^2
=\sum_{0<u\le t}\gD \tmk(u)^4
\\ & \phantom:
=\sum_{0<u\le t}(1-nu)^{4k}\gD S_k(u)^4,
  \end{split}
\end{equation*}
which has drift, in analogy with \eqref{v} and \eqref{w},
\begin{equation}
\label{ttw}
  \begin{split}
\ttW_k(t)
&\=
(1-nt)^{4k}
\sumij\frac12\ccci\cccj\lrpar{\sumlki\binom kl\ccci^{l}\cccj^{k-l}}^4
\\&
\phantom:
=(1-nt)^{4k}
\sum_{l_1,l_2,l_3,l_4=1}^{k-1}\frac12\prod_{i=1}^4\binom k{l_i}
\cdot
\s{\sum_il_i+1,\;4k+1-\sum_il_i}(t);
  \end{split}
\end{equation}
thus, $\qttmkt-\intot \ttW_k(u)\dd u$ is 
yet another martingale which starts at 0.

Assume in the remainder of the section that $1-nt\ge n\qqqw$, \ie{}
\begin{equation}\label{tt}
  0 \le t \le  n\qw-n^{-4/3}.
\end{equation}
(Although some estimates require only $0\le t< 1/n$.)
By Lemmas \ref{L1}(i) and \ref{Lesk}, for any $k,l\ge2$,
\begin{equation*}
  \E \sklt = O\parfracx{n^2}{(1-nt)^{2k+2l-6}}.
\end{equation*}
Hence, \eqref{ttw} yields
\begin{equation*}
  \E\ttW_k(t) = O\biggpar{(1-nt)^{4k}\frac{n^2}{(1-nt)^{8k-2}}}
=O\lrpar{\frac{n^2}{(1-nt)^{4k-2}}}.
\end{equation*}
Since $\Var(M(t))=\E M^2=\E[M,M]_t$ for every square integrable martingale
with $M(0)=0$,
\begin{equation}
\label{ettmkt}
  \begin{split}
\E \bigpar{\ttM_k(t)}^2
&=
\E\qttmkt
=
\E\intot \ttW_k(u)\dd u
%\\ &
=
O\lrpar{\intot \frac{n^2}{(1-nu)^{4k-2}}\dd u}
\\ &
=
O\parfracx{n^2t}{(1-nt)^{4k-3}}.
  \end{split}
\raisetag{\baselineskip}
\end{equation}

We define, subtracting by \eqref{tesk} an approximation to the mean,
\begin{equation}
  \label{yx}
\xkt\=\skt-n\ppk\unt.
\end{equation}

\begin{lemma}\label{Lyx}
For every $k\ge2$ and $1-nt\ge n\qqqw$,
\begin{equation*}
  \begin{split}
\xkt=\OLii\parfracc{n\qq}{(1-nt)^{2k-3/2}}.
  \end{split}
\end{equation*}
\end{lemma}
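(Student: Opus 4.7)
\textbf{Proof plan for \refL{Lyx}.}
The plan is to decompose $\xkt$ into a mean-zero random fluctuation plus a small deterministic bias and bound each piece by quoting the estimates already established, rather than invoking the martingale machinery just set up (which will be needed for the finer CLT-type statement in the next section).

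First I would write
\begin{equation*}
\xkt = \bigpar{\skt - \E\skt} + \Bigpar{\E\skt - n\ppk\unt}.
\end{equation*}
Since the first bracket has mean zero and the second is deterministic, the cross term vanishes and
\begin{equation*}
\|\xkt\|_{L^2}^2 = \Var\skt + \Bigpar{\E\skt - n\ppk\unt}^2.
\end{equation*}

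Next, \refT{Tvar} gives $\Var\skt = O\bigpar{n/(1-nt)^{4k-3}}$, whose square root is exactly of the target order $n^{1/2}/(1-nt)^{2k-3/2}$. For the deterministic bias, the first form of \eqref{tesk} in \refT{Tesk} yields
\begin{equation*}
\bigabs{\E\skt - n\ppk\unt} = O\Bigpar{\frac{nt}{(1-nt)^{2k}}}.
\end{equation*}

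Finally I would check that the square of this bias is dominated by $n/(1-nt)^{4k-3}$; this reduces to the inequality $nt^2 = O\bigpar{(1-nt)^3}$. Since $nt \le 1$ we have $nt^2 \le t \le 1/n$, and the hypothesis $1-nt \ge n\qqqw$ gives $(1-nt)^3 \ge 1/n$, so the inequality holds. Combining, $\|\xkt\|_{L^2}^2 = O\bigpar{n/(1-nt)^{4k-3}}$, which is the claim. The only nontrivial point is this last arithmetic check: the balance between the fluctuation scale $\sqrt{\Var\skt}$ and the polynomial-approximation bias crosses over precisely at $1-nt \asymp n\qqqw$, which is the reason for the hypothesis of the lemma.
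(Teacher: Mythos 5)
Your proof is correct and is essentially the paper's own argument: the same orthogonal decomposition $\|\xkt\|_{L^2}^2 = \Var\skt + (\E\skt - n\ppk(\frac1{1-nt}))^2$, the same appeals to \refT{Tvar} and the first form of \eqref{tesk} in \refT{Tesk}, and the same arithmetic reduction to $n(1-nt)^3\ge1$. The paper states the final step more tersely, but the content is identical.
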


\begin{proof}
\begin{equation*}
  \begin{split}
\norm{\xkt}_{L^2}^2
={\Var\skt}	+\Bigabs{\E\skt-n\ppk\unt}^2,
  \end{split}
\end{equation*}
and the result follows by Theorems \refand{Tvar}{Tesk},
using $n(1-nt)^3\ge1$.
\end{proof}

\begin{lemma}
  \label{Lsklt}
For every $k,l\ge2$ and $1-nt\ge n\qqqw$,
\begin{equation*}
  \sklt=n^2\ppk\unt\pp l\unt +\OL\parfracx{n\qqc}{(1-nt)^{2k+2l-9/2}}.
\end{equation*}
\end{lemma}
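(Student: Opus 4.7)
The plan is to expand the identity $S_{k,l}(t)=S_k(t)S_l(t)-S_{k+l}(t)$ from \eqref{skl} around the approximation $\esk(t)\approx n\ppk\untz$ provided by \refT{Tesk}, controlling the resulting errors by the $L^2$ bound on the fluctuations supplied by \refL{Lyx}.

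Concretely, write $\skt=n\ppk\unt+\xkt$ and $\slt=n\pp l\unt+\xlt$, so that
\begin{equation*}
\skt\slt
=n^2\ppk\unt\pp l\unt
+n\ppk\unt\xlt+n\pp l\unt\xkt
+\xkt\xlt.
\end{equation*}
The first cross term has $L^2$-norm at most
$n\cdot O\bigpar{(1-nt)^{-(2k-3)}}\cdot\OLii\bigpar{n\qq(1-nt)^{-(2l-3/2)}}=\OLii\bigpar{n\qqc(1-nt)^{-(2k+2l-9/2)}}$,
using the estimate $\ppk\unt=O((1-nt)^{-(2k-3)})$ (which, for $k\ge2$, follows from the degree of $\ppk$ noted at the end of \refT{Tesk}, and is trivial for $k=2$). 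The second cross term is handled the same way, and by Cauchy--Schwarz
\begin{equation*}
\norm{\xkt\xlt}_{L^1}\le\norm{\xkt}_{L^2}\norm{\xlt}_{L^2}=O\Bigpar{\frac{n}{(1-nt)^{2k+2l-3}}}.
\end{equation*}
Because the hypothesis $1-nt\ge n\qqqw$ is equivalent to $n(1-nt)^3\ge1$, one has
$n/(1-nt)^{2k+2l-3}\le n\qqc/(1-nt)^{2k+2l-9/2}$, so the third term is also $\OL\bigpar{n\qqc/(1-nt)^{2k+2l-9/2}}$.

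Finally, $\s{k+l}(t)\ge0$ and \refL{Lesk} give $\norm{\s{k+l}(t)}_{L^1}=\E\s{k+l}(t)=O\bigpar{n/(1-nt)^{2k+2l-3}}$, which by the same comparison is $\OL\bigpar{n\qqc/(1-nt)^{2k+2l-9/2}}$. Subtracting $\s{k+l}(t)$ from $\skt\slt$ therefore absorbs into the stated error, yielding the claim.

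There is no real obstacle: everything reduces to bookkeeping of powers of $n$ and $(1-nt)$, with the only nontrivial ingredient being the use of $n(1-nt)^3\ge1$ to show that the $L^1$-norms of $\xkt\xlt$ and $\s{k+l}(t)$ are both dominated by the target error. The step one must be most careful with is the bound on $\ppk\unt$, which relies on the degree statement established at the end of the proof of \refT{Tesk}.
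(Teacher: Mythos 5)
Your proposal is correct and follows essentially the same route as the paper: decompose $\skt=n\ppk\unt+\xkt$, expand the product in \eqref{skl}, bound the cross terms and $\xkt\xlt$ via \refL{Lyx} and the \CSineq{}, bound $\s{k+l}(t)$ via \refL{Lesk}, and absorb the lower-order errors using $n(1-nt)^3\ge1$. The only difference is that you spell out the bookkeeping (including the bound $\ppk\unt=O((1-nt)^{-(2k-3)})$) which the paper leaves implicit.
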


\begin{proof}
By \eqref{skl} and \eqref{yx}, 
\begin{equation*}
  \sklt=\Bigpar{n\ppk\unt+\xkt}\Bigpar{n\pp l\unt+\xlt}-\s{k+l}(t)
\end{equation*}
and thus, using Lemmas \refand{Lyx}{Lesk} and the \CSineq{},
\begin{multline*}
\Bignorm{\sklt-n^2\ppk\unt\pp l\unt}_{L^1}
\\
=O\Bigpar{
n\qqc(1-nt)^{-2k-2l+9/2}
+
n(1-nt)^{-2k-2l+3}
},
\end{multline*}
which yields the result by our assumption 
$n(1-nt)^3\ge 1$.
\end{proof}

\begin{lemma}\label{Ltw}
For every $k\ge2$,   there exists a polynomial $\px k$ of degree
$2k-2$ given by
\begin{equation}\label{px}
  \begin{split}
 \px k(x)
&= 
x^{-2k}\sumlki\summki\frac12\binom kl\binom km\pp{l+m+1}(x)\pp{2k+1-l-m}(x)
\\
&= 
x^{2}\sumlki\summki\frac12\binom kl\binom km\pq{l+m+1}(x)\pq{2k+1-l-m}(x)
  \end{split}
\end{equation}
 such that, for $1-nt\ge n\qqqw$,
\begin{equation*}
  \tW_k(t)=n^2\px k\unt+\OL\parfracc{n\qqc}{(1-nt)^{2k-1/2}}.
\end{equation*}
\end{lemma}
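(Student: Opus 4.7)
The plan is to expand $\tW_k(t)$ using \eqref{tw} and \eqref{w} and apply \refL{Lsklt} termwise. We have
\[
  \tW_k(t) = (1-nt)^{2k} \sumlki\summki \frac12 \binom kl\binom km\, \s{l+m+1,\,2k+1-l-m}(t).
\]
For each $1\le l,m\le k-1$, both indices $l+m+1$ and $2k+1-l-m$ lie in $\{3,\ldots,2k-1\}$, hence are $\ge 2$, so \refL{Lsklt} applies to each summand. The error-term denominator exponent coming from that lemma is $2(l+m+1)+2(2k+1-l-m)-9/2=4k-1/2$, independent of $(l,m)$. After multiplying by $(1-nt)^{2k}$ this exponent becomes $2k-1/2$, and since only finitely many terms (a number depending on $k$ alone) are summed, the $L^1$-bound $\OL\parfracc{n\qqc}{(1-nt)^{2k-1/2}}$ survives the summation.

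The corresponding main term assembles as
\[
  n^2 (1-nt)^{2k}\sumlki\summki \frac12 \binom kl\binom km\, \pp{l+m+1}\unt\,\pp{2k+1-l-m}\unt,
\]
which, setting $x=1/(1-nt)$ so that $(1-nt)^{2k}=x^{-2k}$, is exactly $n^2 \px k(x)$ with $\px k$ defined by the first expression in \eqref{px}.

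To derive the second expression and pin down the degree, I would invoke the factorization $\pp j(x)=x^{j}\pq j(x)$ from \refT{Tesk}, valid since every index appearing here is $\ge 3$. This yields $\pp{l+m+1}(x)\pp{2k+1-l-m}(x)=x^{2k+2}\pq{l+m+1}(x)\pq{2k+1-l-m}(x)$, and pulling $x^{-2k}\cdot x^{2k+2}=x^{2}$ out of the sum gives the second form, which exhibits $\px k$ as a polynomial. Each $\pq j$ has degree $j-3$ with positive leading coefficient (by the recursion \eqref{qk1}), so the products $\pq{l+m+1}\pq{2k+1-l-m}$ have degree $2k-4$ with positive leading coefficients; summing positive multiples preserves degree, and multiplying by $x^{2}$ then yields the claimed degree $2k-2$. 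The only real work here is the exponent bookkeeping in the error term; beyond that, the lemma is a direct algebraic unfolding of the definitions combined with \refL{Lsklt}.
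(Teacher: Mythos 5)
Your argument is correct and is essentially the paper's own proof, which simply declares the lemma an immediate consequence of \eqref{tw}, \eqref{w} and \refL{Lsklt}; you have just spelled out the exponent bookkeeping and the degree count (including the observation that the $\pq j$ have positive leading coefficients, so no cancellation can lower the degree) that the paper leaves implicit.
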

\begin{proof}
  An immediate consequence of \eqref{tw}, \eqref{w} and \refL{Lsklt}.
\end{proof}

\begin{lemma}\label{Lqtm}
  \begin{thmenumerate}
	\item
For every $k\ge2$,   there exists a polynomial $\py k$ of degree
$2k-3$ given by 
\begin{equation}\label{py}
 \py k'(x)
= 
x^{-2}\pxk(x),
\end{equation}
with $\py k(1)=0$,
 such that, for $1-nt\ge n\qqqw$,
\begin{equation}\label{lqtm1} 
\qtmkt=n\py k \unt+\OL\parfracc{nt\qq}{(1-nt)^{2k-3/2}}.
\end{equation}
\item
If $n^2t\to\infty$ and $n(1-nt)^3\to\infty$, then
\begin{equation*}%\label{lqtm1} 
\qtmkt=n\py k \unt\bigpar{1+o_{L^1}(1)}
=n\py k \unt\bigpar{1+\op(1)}.
\end{equation*}
  \end{thmenumerate}
\end{lemma}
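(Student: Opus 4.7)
The plan is to realize $\qtmkt$ as a closed-form drift plus a small error by using the martingale identity from \eqref{ttmkt}:
\begin{equation*}
  \qtmkt = \int_0^t \tW_k(u)\dd u + \ttmk(t).
\end{equation*}
Then (i) reduces to (a) computing the integral of the leading part of $\tW_k$ in closed form, and (b) bounding the integrated error from \refL{Ltw} and the martingale $\ttmk(t)$ in $L^1$.

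For (a), I would first check that $\pyk$ as prescribed by $\pyk(1)=0$ and $\pyk'(x)=x^{-2}\pxk(x)$ is a well-defined polynomial of degree $2k-3$: in the second form of \eqref{px}, the prefactor $x^2$ absorbs $x^{-2}$, and each $\pqk$ ($k\ge 3$) is a polynomial of degree $k-3$, so the sum has degree $2k-4$, giving $\pyk'$ polynomial of degree $2k-4$ and $\pyk$ polynomial of degree $2k-3$. The main term then follows from the substitution $y=1/(1-nu)$, under which $\dd u=\dd y/(ny^2)$, so that
\begin{equation*}
  \int_0^t n^2\pxk\unu\dd u = n\int_1^{1/(1-nt)}\pyk'(y)\dd y = n\pyk\unt.
\end{equation*}
For (b), plugging the $\OL$ error from \refL{Ltw} into the integral and evaluating $\int_0^t n^{3/2}(1-nu)^{-(2k-1/2)}\dd u$ explicitly via its antiderivative, together with the mean-value estimate $(1-nt)^{-\beta}-1\le\beta nt(1-nt)^{-\beta}$, produces an $L^1$ error of order $n^{3/2}t(1-nt)^{-(2k-3/2)}$; and since $(nt)^{1/2}\le 1$ this is bounded by $nt^{1/2}(1-nt)^{-(2k-3/2)}$, matching the target. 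The martingale is controlled directly by \eqref{ettmkt}, which gives $\|\ttmk(t)\|_{L^2}=O(nt^{1/2}(1-nt)^{-(2k-3/2)})$ and hence the same bound in $L^1$. Summing the two contributions yields \eqref{lqtm1}.

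For (ii), the recursion \eqref{qk1} combined with \eqref{py} shows that both $\pxk$ and $\pyk$ have positive leading coefficients and are strictly positive on $(1,\infty)$, so $\pyk(y)\asymp y^{2k-3}-1$ for $y\ge 1$. Setting $y=1/(1-nt)$, (ii) reduces to checking
\begin{equation*}
  \frac{t^{1/2}(1-nt)^{-(2k-3/2)}}{\pyk(1/(1-nt))}=o(1).
\end{equation*}
The main obstacle is that the two regions $y\le 2$ (small $nt$) and $y\ge 2$ (near-critical) scale differently and must each be checked. For $y\le 2$ we have $\pyk(y)\asymp y-1\asymp nt$ and $(1-nt)^{-(2k-3/2)}\asymp 1$, so the ratio is $\asymp t^{1/2}/(nt)=(n^2t)^{-1/2}$, which is $o(1)$ iff $n^2t\to\infty$. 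For $y\ge 2$, $\pyk(y)\asymp y^{2k-3}$ and $t^{1/2}\asymp n^{-1/2}$, and the ratio simplifies to $\asymp y^{3/2}n^{-1/2}=[n(1-nt)^3]^{-1/2}$, which is $o(1)$ iff $n(1-nt)^3\to\infty$. Both hypotheses together yield the $o_{L^1}(1)$ claim, and Markov's inequality then upgrades this to $\op(1)$.
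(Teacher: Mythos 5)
Your proposal is correct and follows essentially the same route as the paper: the martingale decomposition $\qtmkt=\intot\tW_k(u)\dd u+\ttmkt$, the substitution $y=1/(1-nu)$ for the main term, \refL{Ltw} together with $n^{3/2}t\le nt^{1/2}$ for the drift error, \eqref{ettmkt} for the martingale error, and for part (ii) the positivity/monotonicity of $\pyk$ on $[1,\infty)$ plus a case split at $nt=1/2$. The only cosmetic difference is that you phrase the size of $\pyk$ as $\pyk(y)\asymp y^{2k-3}-1$ whereas the paper writes the equivalent $\pyk\untz\asymp nt(1-nt)^{3-2k}$.
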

\begin{proof}
\pfitem{i}
By \eqref{ttmkt}, \refL{Ltw} and \eqref{ettmkt},
\begin{equation*}
  \begin{split}
\qtmkt
&=\intot \tW_k(u)\dd u+\ttmkt
\\&
=\intot n^2\pxk\unu\dd u+
\OL\parfracc{n\qqc t+nt\qq}{(1-nt)^{2k-3/2}}
  \end{split}
\end{equation*}
and \eqref{lqtm1} follows, noting that
$n\qqc t\le nt\qq $.

\pfitem{ii}
By \eqref{py}, $\py k$ is increasing for $x>1$, and thus non-zero,
and it follows that $\py k\untz\asymp nt(1-nt)^{3-2k}$.
It remains only to verify that 
$nt\qq(1-nt)^{3/2}=o(n^2t(1-nt)^{3})$, which is obvious under our
conditions
if we consider
$nt\le1/2$ and $nt\ge1/2$ separately.
\end{proof}

We will use the following general result based on \cite{JS}; 
see \cite[Proposition 9.1]{SJ154} for a detailed proof.
(See also \cite{SJ79},
\cite{SJ94} and \cite{SJ196} for similar versions.)

\begin{proposition}\label{P:JS}
Assume that for each $n$, 
$\mn(x)$ is a 
martingale on $\oi$
with $\mn(0)=0$, and that 
$\gss(x)$, $x\in\oi$, 
is a  \rompar(non-random) 
continuous function 
such that
for every fixed $x\in\oi$, 
\begin{align}
&[\mn, \mn]_x\pto \gss(x)
\quad\text{as \ntoo,}
\label{js1}
\\
&\sup_n\E [\mn,\mn]_x <\infty.
\label{js2}
\end{align}
Then $\mn\dto M$ as $n\to\infty$, in 
$D\oi$, where
$M$ is a continuous $q$-dimensional Gaussian martingale with $\E M(x)=0$ and
covariances
\begin{equation*}%\label{x33}
\E \bigpar{M(x)M(y)}=\gss (x),\quad 0\le x\le y\le1.
\end{equation*}
In particular, $\mn(1)\dto N(0,\gss(1))$.
\end{proposition}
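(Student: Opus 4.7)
The plan is to invoke a martingale functional central limit theorem from \cite{JS} (\eg{} Theorem~VIII.3.11 of that book), which states that a sequence of c\`adl\`ag martingales $\mn$ on $\oi$ with $\mn(0)=0$ converges weakly in $D\oi$ to a continuous Gaussian martingale $M$ with $\E M(x)^2=\gss(x)$ whenever \textbf{(a)} $[\mn,\mn]_x\pto\gss(x)$ for each $x\in\oi$, with $\gss$ continuous and deterministic, and \textbf{(b)} the maximum jump $\sup_{0<s\le 1}\bigabs{\gD\mn(s)}$ tends to $0$ in probability. Hypothesis \eqref{js1} is exactly (a). For tightness and uniform integrability I would appeal to \eqref{js2} via Doob's $L^2$ maximal inequality, which gives $\E\sup_{x\in\oi}\mn(x)^2\le 4\E[\mn,\mn]_1$; so the family $\{\mn\}$ is uniformly bounded in $L^2$ and tightness in $D\oi$ follows by the standard argument of the JS framework.

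The main technical content is deducing condition (b), which is not assumed directly. The key observation is that $x\mapsto[\mn,\mn]_x$ is non-decreasing and its pointwise limit $\gss$ is continuous and monotone. A standard Dini-type argument (test convergence on a countable dense set and interpolate using monotonicity) will then upgrade \eqref{js1} to uniform convergence in probability:
$\sup_{x\in\oi}\bigabs{[\mn,\mn]_x-\gss(x)}\pto 0.$
Since the continuous limit $\gss$ has no jumps, this forces the maximum jump of $[\mn,\mn]$ on $\oi$ to vanish in probability, and using the identity $(\gD\mn(s))^2=\gD[\mn,\mn]_s$ I conclude
$\sup_{0<s\le 1}\bigabs{\gD\mn(s)}^2\le\sup_{0<s\le 1}\gD[\mn,\mn]_s\pto 0,$
which is precisely (b).

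With (a), (b), and tightness in hand, the JS theorem yields $\mn\dto M$ in $D\oi$, where $M$ is a continuous Gaussian martingale with $\E M(x)=0$ and quadratic variation $\langle M\rangle_x=\gss(x)$. The covariance structure then follows from the martingale property: for $0\le x\le y\le 1$, $\E\bigpar{M(y)\mid\cE_x}=M(x)$ for the natural filtration, whence $\E\bigpar{M(x)M(y)}=\E M(x)^2=\gss(x)$. Specializing to $x=y=1$ gives $\mn(1)\dto N(0,\gss(1))$. I expect the main obstacle to be the derivation of (b): convergence of the quadratic variation alone does not automatically kill jumps, and one has to exploit both the monotonicity of $[\mn,\mn]$ and the continuity of $\gss$ to convert pointwise into uniform convergence and then back into a jump-negligibility statement. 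If $\gss$ were merely measurable rather than continuous, the limit could genuinely have a jump component and the conclusion would fail.
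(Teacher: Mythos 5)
The paper does not actually prove this proposition; it is imported from the literature, with a pointer to Proposition~9.1 of Janson~(2004), reference \cite{SJ154}, for the detailed proof, so there is no in-paper argument to compare against line by line. That said, your sketch is sound and follows what is surely the intended route: invoke a Jacod--Shiryaev martingale functional CLT whose hypotheses are (a) pointwise convergence of the quadratic variation to a continuous deterministic function and (b) asymptotic negligibility of the maximal jump, and observe that (b) is not among the hypotheses \eqref{js1}--\eqref{js2} and must be derived. You correctly identify that derivation as the crux and handle it properly: fix $\eps>0$, use uniform continuity of $\gss$ to choose a finite grid with $\gss$-increments $<\eps$, and use monotonicity of $x\mapsto[\mn,\mn]_x$ to bound $\sup_{x\in\oi}\bigabs{[\mn,\mn]_x-\gss(x)}$ by the maximum over the (finitely many) grid points plus $\eps$; this gives $\sup_x\bigabs{[\mn,\mn]_x-\gss(x)}\pto0$, and then continuity of $\gss$ squeezes every jump of $[\mn,\mn]$, so $\sup_s\gD[\mn,\mn]_s\pto0$, which via $\gD[\mn,\mn]_s=(\gD\mn(s))^2$ is exactly (b) (and in particular implies the conditional Lindeberg condition in the alternative formulations). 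Your use of \eqref{js2} together with Doob's $L^2$ maximal inequality to get uniform $L^2$-boundedness, and hence tightness within the JS framework, is also the right role for that hypothesis --- indeed the paper's follow-up remark notes \eqref{js2} is equivalent to $\sup_n\E|\mn(x)|^2<\infty$, which is the form used in \cite{SJ154}. The only soft spot is the specific theorem number you quote from Jacod--Shiryaev, which I would not commit to without checking (the Chapter~VIII results come in several variants phrased with $[\cdot,\cdot]$ or $\langle\cdot,\cdot\rangle$ and with max-jump or Lindeberg conditions); but the conditions you verify are the correct ones, and the final covariance computation via the martingale property is standard.
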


\begin{remark}\label{R:JS}
\refP{P:JS} %the proposition 
extends to vector-valued martingales;
see the versions in \cite{SJ154,SJ196}.  
\end{remark}

\begin{remark}
The versions in \cite{SJ154,SJ196} are for martingales on $[0,\infty)$; it is
  easily seen that the  versions are equivalent by stopping the
  martingales at a fixed time; moreover, by a (deterministic) change
  of time, we may replace $\oi$ by any closed or half-open interval
  $[a,b]$ or $[a,b) \subseteq[-\infty,\infty]$.

Further,  \eqref{js2} is equivalent to
$\sup_n\E |\mn(x)|^2 <\infty$, the form used in \eg{} \cite{SJ154}.
\end{remark}

\begin{lemma}\label{Ltm}
If $n^2t\to\infty$ and $n(1-nt)^3\to\infty$, then
\begin{equation*}
\tmkt\sim\AsN\Bigpar{0,n\py k \unt}.
\end{equation*}
\end{lemma}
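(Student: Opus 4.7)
The plan is to apply the martingale CLT \refP{P:JS} to a rescaled, time-changed version of $\tmk$. Let $t_n$ denote the target time (satisfying $n^2 t_n\to\infty$ and $n(1-nt_n)^3\to\infty$), set $\gs_n^2\=n\py k\bigpar{1/(1-nt_n)}$, and define
\[
\mn(x)\=\tmk(xt_n)/\gs_n, \qquad x\in\oi.
\]
Then $\mn$ is a martingale on $\oi$ with $\mn(0)=0$ and quadratic variation $[\mn,\mn]_x=\qtmk_{xt_n}/\gs_n^2$. The target conclusion $\tmk(t_n)/\gs_n\dto N(0,1)$ is exactly $\mn(1)\dto N(0,1)$.

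To verify \eqref{js1}, fix $x\in(0,1]$. Since $n^2(xt_n)=x\,n^2 t_n\to\infty$ and $n(1-nxt_n)^3\ge n(1-nt_n)^3\to\infty$, \refL{Lqtm}(ii) applies at time $xt_n$ and gives
\[
[\mn,\mn]_x=\frac{\py k\bigpar{1/(1-nxt_n)}}{\py k\bigpar{1/(1-nt_n)}}\bigpar{1+\op(1)}.
\]
By the standard subsequence principle it suffices to work along any subsequence of $n$ on which $nt_n$ converges to some $a\in[0,1]$; along such a subsequence the deterministic ratio above converges to a function $\gss(x)$ with $\gss(1)=1$. Condition \eqref{js2} is immediate from the $L^1$ estimate \eqref{lqtm1} in \refL{Lqtm}(i), which bounds $\E[\mn,\mn]_x$ uniformly in $n$.

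Applying \refP{P:JS} then yields $\mn(1)\dto N(0,\gss(1))=N(0,1)$ along each such subsequence, and hence along the full sequence, giving the desired conclusion. The main technical obstacle I anticipate is that \refP{P:JS} produces a \emph{continuous} Gaussian limit, which typically needs a Lindeberg-type negligibility condition on the jumps in addition to \eqref{js1}--\eqref{js2}; moreover, in the subcase $nt_n\to 1$ the candidate $\gss$ above degenerates (it vanishes for $x<1$) so a non-trivial time change may be required to produce a continuous limit. Both issues should be handled using the deterministic bound $\gD\tmk(u)=(1-nu)^k\bigpar{(\ccci+\cccj)^k-\ccci^k-\cccj^k}\le 2^k(1-nu)^k|\cC_1(u)|^k$ combined with the standard subcritical bound $|\cC_1(t)|=O\bigpar{(1-nt)^{-2}\log n}$ \whp, which together make the largest jump of $\mn$ of order $\op(1)$ under our hypotheses.
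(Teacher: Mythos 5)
Your overall strategy --- rescale, change time to a fixed interval, pass to subsequences along which $nt\to a\in\oi$, and feed \refL{Lqtm} into \refP{P:JS} --- is the same as the paper's, and for $a\in[0,1)$ your linear time change $\mn(x)=\tmk(xt)/\gs_n$ does the job exactly as in the paper (your worry about an extra Lindeberg-type condition is unfounded: \refP{P:JS} needs only \eqref{js1}--\eqref{js2}, since convergence of the optional quadratic variation to a continuous deterministic function already forces the jumps to be negligible). The genuine gap is the case $a=1$, which the hypothesis $n(1-nt)^3\to\infty$ does allow (e.g.\ $1-nt=n^{-1/4}$). There, with your linear time change, \refL{Lqtm}(ii) gives $[\mn,\mn]_x\pto \gss(x)$ with $\gss(x)=0$ for every fixed $x<1$ (since $\py k(1/(1-nxt))$ stays bounded while $\py k\unt\to\infty$) and $\gss(1)=1$; this limit is discontinuous, so the hypotheses of \refP{P:JS} fail and the proposition cannot be invoked. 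Your proposed remedy --- a bound on the largest jump via the largest component --- does not repair this: the obstruction is not the jump size but the degeneracy of the limit of the quadratic variation under the linear parametrization, and no jump estimate turns a discontinuous $\gss$ into a continuous one. (Moreover, the bound on the largest component you quote is not established in the paper and plays no role in its argument.)

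What is actually needed in the case $nt\to1$ --- and what the paper does --- is a nonlinear reparametrization: one sets $\mn(x)\=n\qqw(1-nt)^{k-3/2}\tmk(\tnx)$ with $1-n\tnx=\min\bigl((1-nt)/x,\,1\bigr)$, so that \refL{Lqtm}(ii) now yields $[\mn,\mn]_x\pto c_kx^{2k-3}$, where $c_k>0$ is the leading coefficient of $\py k$; this limit is continuous and nondegenerate, \eqref{js2} follows from the same $L^1$ estimates, and \refP{P:JS} applies, giving $\mn(1)=n\qqw(1-nt)^{k-3/2}\tmk(t)\dto N(0,c_k)$, which is the assertion in this regime because $n\py k\unt\sim c_k\,n(1-nt)^{3-2k}$ when $nt\to1$. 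You correctly anticipated that ``a non-trivial time change may be required,'' but as written your proof leaves exactly this subcase unproved; to complete it you must carry out such a reparametrization (or an equivalent device) and re-verify \eqref{js1}--\eqref{js2} for it.
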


\begin{proof}
In order to apply \refP{P:JS}, we have to change the time scale to a
fixed interval so that the quadratic variation converges.
By considering subsequences, we may assume that $nt\to a$ for some
$a\in\oi$. We then define $\mn(x)$ for $x\in\oi$ as follows.
\begin{romenumerate}
\item
If $0< a<1$, we let $\mn(x)\=(n^2t)\qqw\tmk(xt)$, and see that
\refL{Lqtm}(ii) implies \eqref{js1} with $\gss(x)=a\qw\pyk(1/(1-ax))$.
\item
If $a=0$, we define $\mn(x)$ in the same way, and find now that
\refL{Lqtm}(ii) implies \eqref{js1} with $\gss(x)=x\pyk'(1)$.
\item
If $a=1$, we let $\mn(x)\=n\qqw(1-nt)^{k-3/2}\tmk(\tnx)$, where
\begin{equation*}
  \tnx\=
  \begin{cases}
0, & x\le 1-nt, \\
\frac1n\bigpar{1-\frac{1-nt}x}, & x\ge 1-nt;
  \end{cases}
\end{equation*}
thus $1-n\tnx=\min((1-nt)/x,\,1)$. In this case \refL{Lqtm}(ii)
implies \eqref{js1} with $\gss(x)=c_kx^{2k-3}$, where $c_k>0$ is the
leading coefficient in $\pyk$.
\end{romenumerate}

In all cases, the same calculation yields also \eqref{js2}, because
the factor $1+\ol(1)$ in \refL{Lqtm} is $\OL(1)$. The result follows
from the final statement in \refP{P:JS}.
\end{proof}

Let us now consider the case $k=2$. 

\begin{theorem}\label{TS2}
If $n^2t\to\infty$ and $n(1-nt)^3\to\infty$, then
\begin{equation*}
\s2(t)\sim\AsN\lrpar{\frac{n}{1-nt},\frac{2n^2t}{(1-nt)^5}}.
\end{equation*}
\end{theorem}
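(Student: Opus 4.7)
The plan is to exploit the martingale decomposition $\tilde S_2(t) = n + \int_0^t \tilde V_2(u)\dd u + \tilde M_2(t)$ from \eqref{tmkt}, and to show that $n+\int_0^t \tilde V_2(u)\dd u$ equals the deterministic function $n(1-nt)$ plus an $L^1$-error that is negligible compared to the standard deviation of $\tilde M_2(t)$. The asymptotic normality of $S_2(t) = (1-nt)^{-2}\tilde S_2(t)$ will then follow from \refL{Ltm} applied to $\tilde M_2$ together with Slutsky's theorem.

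First I would compute $\tilde V_2$ explicitly. By \eqref{v} one has $V_2(t) = S_{2,2}(t)$, so \eqref{tv} becomes $\tilde V_2(t) = (1-nt)^2 S_{2,2}(t) - 2n(1-nt) S_2(t)$. Applying \refL{Lsklt} with $k=l=2$ to $S_{2,2}$ and \refL{Lyx} with $k=2$ to $S_2$, the leading deterministic contributions ($n^2$ and $2n^2$) will cancel---this cancellation is the very reason for the reparametrization by $(1-nt)^k$---leaving $\tilde V_2(t) = -n^2 + \OL\bigpar{n^{3/2}(1-nt)^{-3/2}}$. Since $\int_0^t (1-nu)^{-3/2}\dd u = O\bigpar{(n(1-nt)^{1/2})\qw}$, Fubini should then yield
\[
n + \int_0^t \tilde V_2(u)\dd u = n(1-nt) + \OL\bigpar{n^{1/2}(1-nt)^{-1/2}},
\]
and hence
\[
S_2(t) = \frac{n}{1-nt} + \frac{\tilde M_2(t)}{(1-nt)^2} + \OL\bigpar{n^{1/2}(1-nt)^{-5/2}}.
\]

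Next I would evaluate $\tilde Q_2$: from \eqref{px} with $p_3(x)=x^3$ one gets $\tilde P_2(x) = 2x^2$; then \eqref{py} with $\tilde Q_2(1)=0$ gives $\tilde Q_2(x) = 2(x-1)$, so $n\tilde Q_2(1/(1-nt)) = 2n^2 t/(1-nt)$. \refL{Ltm} then supplies $\tilde M_2(t)/(1-nt)^2 \sim \AsN\bigpar{0,\,2n^2 t/(1-nt)^5}$, whose standard deviation is of order $n t^{1/2}(1-nt)^{-5/2}$ and thus exceeds the $L^1$-error above by a factor $(n^2 t)^{1/2}\to\infty$. Slutsky's theorem will then deliver the claimed asymptotic normality.

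The one technical point is the exact cancellation of the $O(n^2)$ contributions in $\tilde V_2$: without it the integrated drift would swamp the martingale and no Gaussian limit for $S_2$ would emerge. Everything after that is routine bookkeeping, matching the residual $L^1$-errors against the martingale's standard deviation.
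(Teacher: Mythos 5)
Your overall architecture is the same as the paper's: decompose $\tS_2$ via the martingale $\tM_2$, show the drift integral is $n(1-nt)$ plus a negligible $L^1$-error, identify $\py_2(x)=2(x-1)$, and invoke \refL{Ltm} together with Slutsky. The identification of $\px_2$ and $\py_2$ and the rescaling to get the variance $2n^2t/(1-nt)^5$ are all correct. The gap is in your estimate of $\tV_2$, and it is fatal for part of the allowed range of~$t$.

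You estimate $\tV_2 = (1-nt)^2 S_{2,2} - 2n(1-nt)S_2$ by plugging \refL{Lsklt} (for $S_{2,2}$) and \refL{Lyx} (for $S_2$) into the two terms separately, obtaining $\tV_2 = -n^2 + \OL\bigpar{n^{3/2}(1-nt)^{-3/2}}$. After integrating and dividing by $(1-nt)^2$, the resulting $L^1$-error term has the same order as, or larger than, the standard deviation of $\tM_2(t)/(1-nt)^2 \approx n t^{1/2}(1-nt)^{-5/2}$: the ratio of standard deviation to error is $n t^{1/2}/n^{1/2}=(nt)^{1/2}$, not the $(n^2t)^{1/2}$ you wrote. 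Since $nt<1$ always, $(nt)^{1/2}$ is bounded (and in fact $\le1$), so the error is \emph{not} negligible whenever $nt$ is bounded away from $0$, e.g.\ $nt\to c\in(0,1]$. This is a valid subcase of the hypotheses $n^2t\to\infty$, $n(1-nt)^3\to\infty$, so the argument breaks down there. The arithmetic slip $(n^2t)^{1/2}$ versus $(nt)^{1/2}$ is what hid this from you.

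The paper avoids the problem by \emph{not} passing through \refL{Lsklt}, which is a linear expansion of $S_{k,l}$ around its mean and so carries an error that is linear in the fluctuation $Y_2$. Instead it observes that, since $V_2=S_{2,2}=S_2^2-S_4$, the quadratic-in-$S_2$ part of $\tV_2$ can be completed to a perfect square:
\begin{equation*}
(1-nt)^2 S_2(t)^2 - 2n(1-nt)S_2(t) = \bigpar{(1-nt)S_2(t)-n}^2 - n^2 ,
\end{equation*}
so $\tV_2(t)=\bigpar{(1-nt)S_2(t)-n}^2 - n^2 - (1-nt)^2 S_4(t)$. The squared deviation has $L^1$-norm $\E\bigpar{(1-nt)S_2-n}^2 = (1-nt)^2\Var S_2 + \bigpar{(1-nt)\E S_2 - n}^2 = O\bigpar{n(1-nt)^{-3}}$ (by \refT{Tvar} and \refT{Tes2}, using $n(1-nt)^3\ge1$), and $(1-nt)^2 \E S_4 = O\bigpar{n(1-nt)^{-3}}$ by \refL{Lesk}. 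Thus $\tV_2 = -n^2 + \OL\bigpar{n(1-nt)^{-3}}$, which is smaller than your bound by exactly the factor $\bigpar{n(1-nt)^3}^{1/2}\to\infty$. After integration this gives an error $\OL\bigpar{nt(1-nt)^{-4}}$ in $S_2$, and the ratio of standard deviation to error is then $\bigpar{(1-nt)^3/t}^{1/2} \ge \bigpar{n(1-nt)^3}^{1/2} \to\infty$, which closes the proof. In short, you need to retain the quadratic structure in $S_2$ rather than linearize through the general lemma.
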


\begin{proof}
By \eqref{v} and \eqref{skl}, $V_2(t)=S_{2,2}(t)=S_2(t)^2-S_4(t)$, and thus
\eqref{tv} yields
\begin{equation*}
  \begin{split}
\tV_2(t)
&=(1-nt)^2V_2(t)-2n(1-nt)S_2(t)	
\\
&=\bigpar{(1-nt)S_2(t)-n}^2-n^2-(1-nt)^2S_4(t).
  \end{split}
\end{equation*}
By Theorems \ref{Tvar} and \ref{Tes2},
\begin{equation*}
  \begin{split}
\E\bigpar{(1-nt)S_2(t)-n}^2
&=(1-nt)^2\Var(S_2(t))+
\bigpar{(1-nt)\E S_2(t)-n}^2
\\&
=O\parfrac{n}{(1-nt)^3}+O\parfrac{1}{(1-nt)^6}
=O\parfrac{n}{(1-nt)^3}.
  \end{split}
\end{equation*}
By \refL{Lesk}, $\norm{(1-nt)^2S_4(t)}\xl$ is also
estimated by $O(n(1-nt)^{-3})$.
Hence, 
\begin{equation*}
  \begin{split}
\tV_2(t)
&=-n^2+\OL\parfrac{n}{(1-nt)^3}.
  \end{split}
\end{equation*}
We now obtain from \eqref{tmkt}
\begin{equation}\label{em1}
  \ts 2(t)=\tM_2(t)+n+\intot\tV_2(u)\dd u
=\tM_2(t)+n-n^2t+\OL\parfrac{nt}{(1-nt)^2}.
\end{equation}
For $k=2$, \eqref{px} and \eqref{py} yield $\px2(x)=2x^2q_3(x)^2=2x^2$
and $\py2(x)=2(x-1)$. Hence \refL{Ltm} yields
\begin{equation}\label{em2} 
\tM_2(t)\sim\AsN\Bigpar{0,\frac{2n^2t}{1-nt}}.
\end{equation}
It is easily verified that
$\frac{nt}{(1-nt)^2}\ll\bigpar{\frac{n^2t}{1-nt}}\qq$. Hence,
  \eqref{em1} and \eqref{em2} yield
\begin{equation*}
\tS_2(t)\sim\AsN\Bigpar{n(1-nt),\frac{2n^2t}{1-nt}}.
\end{equation*}
Recalling the definition $\ts2(t)=(1-nt)^2 S_2(t)$, we obtain the
assertion. 
\end{proof}

\begin{proof}[Proof of \refT{T2}, asymptotic normality]
Immediate from \refT{TS2}  by our usual relation
  $\sus(\gnp)=n\qw\s2(-\log(1-p))$. 
\end{proof}

For $k>2$, the argument is more involved, and we will be somewhat sketchy.
We assume $1-nt\gg n\qqqw$ and consider first $k=3$.
By \eqref{v} and \eqref{skl}, 
$V_3(t)=3S_{2,3}(t)=3S_2(t)\s3(t)-3S_5(t)$, and thus
\eqref{tv} yields, using \eqref{yx}, Lemmas \refand{Lesk}{Lyx} and the \CSineq,
\begin{equation*}
  \begin{split}
\tV_3(t)
&=(1-nt)^3V_3(t)-3n(1-nt)^2S_3(t)	
\\&
=3(1-nt)^3\Bigpar{\s2(t)-\frac n{1-nt}}S_3(t)-3(1-nt)^3S_5(t)	
\\&
=3(1-nt)^3\yx2(t)S_3(t)-3(1-nt)^3S_5(t)	
\\&
=3n(1-nt)^3\pp3\unt\yx2(t)+\OL\parfrac{n}{(1-nt)^4}.
  \end{split}
\end{equation*}
Hence, by \eqref{tmkt}, recalling $\pp3(x)=x^3$,
\begin{equation}\label{em3}
  \ts 3(t)=\tM_3(t)+n+3n\intot\yx2(u)\dd u
+\OL\parfrac{nt}{(1-nt)^3},
\end{equation}
where we may ignore the $O$ term but not the integral, unlike the
corresponding expression \eqref{em1} for $k=2$.
We find from \eqref{em1} 
\begin{equation*}
  \yx2(u)
=(1-nu)\qww\bigpar{\ts2(u)-n(1-nu)}
=(1-nu)\qww\tM_2(u)+\OL\parfrac{nu}{(1-nu)^4}.
\end{equation*}
Hence, \eqref{em3} yields
\begin{equation}\label{em4}
  \ts 3(t)-n
=\tM_3(t)+3n\intot(1-nu)\qww\tM_2(u)\dd u
+\OL\parfrac{nt}{(1-nt)^3}.
\end{equation}

We applied above \refP{P:JS} to $\tM_2$, but we only used the result
\refL{Ltm} for a single $t$. Now we use the full process statement of
\refP{P:JS}, from which we conclude (after a change of variables as in
the proof of \refL{Ltm}) that 
$\intot(1-nu)\qww\tM_2(u)\dd u$ also has an asymptotic normal
distribution.
Moreover, by the vector-valued version of \refP{P:JS} mentioned in
\refR{R:JS},  the argument in the proof of \refL{Ltm} yields joint
asymptotic normality of the processes $\tmk$ for different $k$; this
uses a straightforward extension of \refL{Lqtm} to quadratic
covariations $[\tM_{k_1}, \tM_{k_2}]_t$. As a result, the first two
terms on the right hand side of \eqref{em4} are jointly normal, and
the $O$ term can be ignored. (The right normalization here is, \cf{}
\refT{Tvar}, to divide by $n^2t\qq(1-nt)^{-9/2}$.) A careful but
rather tedious (even with \maple) calculation of the involved
covariances yields 
$\ts3(t)\sim\AsN(n,\qc3(1/(1-nt))$ with 
$\qc3(x)=96\,{x}^{3}-198\,{x}^{2}+126\,x-24$.
Hence, with $\pc3(x)=x^6\qc3(x)=96\,{x}^{9}-198\,{x}^{8}+126\,x^7-24 x^6$,
\begin{equation}\label{s3t}
  \s3(t)\sim\AsN\Bigpar{\frac{n}{(1-nt)^3},\pc3\unt}.
\end{equation}

We can argue in the same way for $k>3$ too, which leads to the 
recursive formula 
(for all $k\ge2$, \cf{} \eqref{em1} and \eqref{em3} for $k=2$ and 3)
  \begin{multline}\label{emk}
\yx k(t)
=(1-nt)^{-k}\tM_k(t)
+n(1-nt)^{-k}\sum_{j=2}^{k-1}
\binom{k}{j-1}
\\\times
\intot(1-nu)^k\pp{k+2-j}\unu\yx
j(u)\dd u
+\OL\parfracc{nt}{(1-nt)^{2k}}.
  \end{multline}
This yields, by induction, 
\cf{} \eqref{em1} and \eqref{em4} for $k=2$ and 3,
  \begin{multline}\label{emk2}
\yx k(t)
=(1-nt)^{-k}\tM_k(t)
+n(1-nt)^{-k}\sum_{j=2}^{k-1}
%\\
\intot \pmkj\unu\tM_j(u)\dd u\\
+\OL\parfracc{nt}{(1-nt)^{2k}},
  \end{multline}
for some polynomials $\pmkj(x)$ having degree at most $k+1-j$ and no
terms of degree $\le1$. The asymptotic joint normality of the
processes $\tM_k$ 
(with a careful count of the degrees of the involved polynomials)
now shows the following extension of \refT{TS2} and \eqref{s3t}.
\begin{theorem}\label{TSK}
  There exist polynomials $\qmkk(x)$ of degree (at most) $2k-3$ such
  that if\/ $1-nt\gg n\qqqw$, then
\begin{equation*}
S_k(t)\sim\AsN\Bigpar{n\ppk\unt,\qmkk\unt},
\qquad k\ge2.
\end{equation*}
Furthermore, this holds jointly for all $k\ge2$, with asymptotic
covariances given by polynomials $\qmkl(x)$ of degree (at most) $2k+2l-3$.
\end{theorem}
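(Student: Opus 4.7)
The plan is to establish the recursive identity \eqref{emk2} rigorously by induction on $k$, and then to invoke the vector form of \refP{P:JS} (\refR{R:JS}) applied to the full family $\{\tmk\}_{k\ge2}$. \emph{Step 1 (deriving \eqref{emk}).} Combining \eqref{tv}, \eqref{v} and \eqref{skl} one has
\begin{equation*}
\tvk(t) = (1-nt)^k \sum_{l=1}^{k-1} \tfrac12\binom{k}{l}\bigpar{S_{l+1}(t)S_{k+1-l}(t) - S_{k+2}(t)} - kn(1-nt)^{k-1}S_k(t).
\end{equation*}
Writing $S_j(t) = n\pp j\unt + \yx j(t)$ and expanding, the purely deterministic quadratic pieces $n^2\,\pp{l+1}\unt\,\pp{k+1-l}\unt$ sum, via the recursion \eqref{pk1}, to exactly the quantity that cancels the term $kn\cdot(1-nt)^{k-1}\cdot n\ppk\unt$. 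The $\yx i\yx j$ cross terms are $\OL$ of lower order by \refL{Lyx} and the \CSineq, and the $S_{k+2}$ contribution is controlled by \refL{Lesk}. After substituting into \eqref{tmkt} and integrating, the surviving terms linear in $\yx j$ yield \eqref{emk}.

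\emph{Step 2 (deriving \eqref{emk2}).} The argument is by induction on $k$, with base case $k=2$ given by \eqref{em1}. Assuming the claim for all $j<k$, one substitutes the inductive formula for $\yx j(u)$ into the integral in \eqref{emk}; each double integral $\intot \alpha(u)\int_0^u\beta(s)\tM_i(s)\dd s\dd u$ collapses by Fubini into a single integral of $\tM_i$ against a polynomial kernel in $1/(1-nu)$. Tracking degrees through the multiplications by $\pp{k+2-j}\unu$ then gives $\deg \pmkj \le k+1-j$ with no terms of degree $\le 1$.

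\emph{Step 3 (joint CLT for the martingales).} The vector form of \refP{P:JS} is applied to the family $(\tM_k)_{k\ge2}$, which requires extending Lemmas \refand{Ltw}{Lqtm} to the joint quadratic covariations
\begin{equation*}
[\tM_{k_1}, \tM_{k_2}]_t = \sum_{0<u\le t}(1-nu)^{k_1+k_2}\gD S_{k_1}(u)\gD S_{k_2}(u),
\end{equation*}
whose drifts are sums of $\s{l+m+1,\,k_1+k_2+1-l-m}(t)$-type terms obtained by multiplying two copies of the jump expansion \eqref{dsk}, and for which \refL{Lsklt} supplies the deterministic limit. After the time change used in the proof of \refL{Ltm}, \refP{P:JS} then yields joint asymptotic normality of the processes $(\tM_k(t))_{k\ge2}$ as a Gaussian martingale with explicit covariance kernel. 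The linear functionals $\intot \pmkj\unu\tM_j(u)\dd u$ in \eqref{emk2} are continuous in the Skorokhod paths of $\tM_j$, so the continuous mapping theorem furnishes joint asymptotic normality of $(\yx k(t))_{k\ge2}$, hence of $(S_k(t))_{k\ge2}$.

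The principal obstacle is the degree accounting. One has to check that the $\OL$ error term in \eqref{emk2} is strictly smaller, in the natural scale $n\qq(1-nt)^{-(2k-3/2)}$ suggested by \refT{Tvar}, than the Gaussian fluctuations, and that the combination of the limiting Gaussian kernels produces covariance polynomials $\qmkl$ of degree at most $2k+2l-3$. Because $\pmkj$ has no term of degree $\le 1$, integration never lowers the effective polynomial order, which keeps the degree count consistent; still, uniformly matching the $\OL$ bounds through the induction is the most delicate task and is probably cleanest when arranged around tracking only the leading coefficients of the involved polynomials.
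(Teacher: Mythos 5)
Your proposal is correct and follows essentially the same route as the paper's own (deliberately sketchy) argument: derive \eqref{emk} from \eqref{tv}, \eqref{v}, \eqref{skl} by expanding $S_j=n\pp j\unt+\yx j(t)$, pass by induction to \eqref{emk2}, and then apply the vector-valued version of \refP{P:JS} (see \refR{R:JS}), with \refL{Lqtm} extended to the covariations $[\tM_{k_1},\tM_{k_2}]_t$ via \refL{Lsklt}, followed by the degree count for $\pmkj$ and $\qmkl$. One small bookkeeping caveat in your Step 1: the deterministic quadratic pieces $n^2\pp{l+1}\unt\pp{k+1-l}\unt$ do not cancel $kn(1-nt)^{k-1}\cdot n\ppk\unt$ directly; rather, the terms linear in $\yx k$ cancel (this is precisely why $\tS_k$ is introduced, avoiding a Volterra equation), while the deterministic parts combine, via \eqref{pk1}, into the exact $t$-derivative of the centering $n(1-nt)^k\ppk\unt$, which is absorbed when passing to $\yx k$ — carried out carefully this still yields \eqref{emk}.
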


We have, for example, 
$\qm_2(x)=2x^5$,
$\qm_3(x)=96\,{x}^{9}-198\,{x}^{8}+126\,x^7-24 x^6$ (as said above),
and
$\qm_{2,3}(x)=12\,{x}^{7}-18\,{x}^{6}+6\,{x}^{5}$.
To find $\qm_k=\qm_{k,k}$ and $\qmkl$ in general by this method seems
quite difficult, although it is in principle possible using computer
algebra.
In the next section we will, by a different method, find the
asymptotics of the covariances of the variables $\skt$.
It is natural to conjecture that these coincide with the asymptotic
covariances in \refT{TSK}, which be general probability theory,
\eg{} \cite[Theorem 5.5.9]{Gut},
is equivalent to uniform square integrability of each of the
standardized variables $(\skt-\E\skt)/\Var(\skt)\qq$ as \ntoo.
This is very plausible (and thus verified for $k=2$ and $3$ by our
calculations of $\qm_2$ and $\qm_3$), but we have so far been unable to
verify it in general, and we leave this as an open problem and conjecture.
(It would suffice to consider the case $nt\le1/2$, say, and show
for example that then $\E|\skt-\E\skt|^4=O(n^2)$.)

\begin{conjecture}
$\qmkl$ equals the polynomial $\hpkl$ defined in \eqref{hpkl}.
%=\pp{k+l}(x)-\pp{k+1}(x)\pp{l+1}(x)/x.  
\end{conjecture}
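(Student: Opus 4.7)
The plan is to upgrade the convergence in distribution of \refT{TSK} to convergence of second moments, which combined with the explicit asymptotics $\hpkl$ to be established in the following section would immediately give $\qmkl=\hpkl$. By \cite[Theorem 5.5.9]{Gut}, this upgrade holds once the squared standardized variables $(\skt-\E\skt)^2/\Var(\skt)$ are uniformly integrable in $n$; by \CSineq{} it suffices, as the authors note, to establish the $L^4$ bound
\begin{equation*}
\E(\skt-\E\skt)^4 = O(n^2) \qquad\text{whenever } nt\le 1/2,
\end{equation*}
uniformly for each fixed $k\ge 2$.

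The natural attempt uses the martingale $\tmk(t)$ of \refS{Snormal} together with the Burkholder--Davis--Gundy inequality, which yields
\begin{equation*}
\E\tmk(t)^4 \le C\,\E\qtmkt^2.
\end{equation*}
Now $\qtmkt$ is non-negative with $\E\qtmkt=O(n)$ in the bounded regime (by \refL{Lqtm}(i) at a bounded argument) and variance $\E\ttM_k(t)^2 = O(n^2 t/(1-nt)^{4k-3}) = O(n)$ by \eqref{ettmkt}; hence $\E\qtmkt^2=O(n^2)$ and $\E\tmk(t)^4=O(n^2)$. Since
\begin{equation*}
\skt-\E\skt = (1-nt)^{-k}\Bigpar{\tmk(t)+\intot(\tvk(u)-\E\tvk(u))\dd u}
\end{equation*}
and $(1-nt)^{-k}=O(1)$ in the bounded regime, the martingale contribution is already of the right size.

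The remaining and genuinely delicate step is to control the $L^4$ norm of the integrated drift fluctuation $\intot(\tvk(u)-\E\tvk(u))\dd u$. Since $\tvk(u)$ is, by \eqref{tv} and \eqref{v}, a linear combination of the variables $\sklt(u)$ and a multiple of $\skt(u)$, and since $\sklt=\skt\slt-\s{k+l}(t)$, this reduces to $L^{8}$ bounds on the individual $\skt$. One would therefore set up a simultaneous induction on $k$ (and in the relevant $L^{2p}$) that propagates fourth- and higher-moment control on $\skt-\E\skt$ through the whole hierarchy of multi-indexed sums $\{S_{k_1,\dots,k_m}\}$ appearing as iterated quadratic-variation drifts, invoking BDG at each level exactly as above.

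The main obstacle, in my view, is closing this induction quantitatively: each level requires bounding a yet more intricate multi-indexed sum in a yet higher $L^p$, and the bootstrap does not obviously terminate without some independent \emph{a priori} moment bound. A cleaner alternative, alluded to by the authors as to be treated elsewhere, is the branching-process approximation of the neighbourhood exploration process: in the subcritical regime the size of the component of a uniformly random vertex is stochastically dominated by a Borel variable with parameter bounded away from $1$ and hence has all exponential moments, and combined with a decorrelation argument for distant components this should yield all polynomial moments of $\skt$ of the correct order directly, bypassing the martingale bookkeeping entirely.
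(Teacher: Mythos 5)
This statement is an open conjecture in the paper: the authors explicitly say they ``have so far been unable to verify it in general,'' and the paper contains no proof, only the observation that by \cite[Theorem 5.5.9]{Gut} it would suffice to establish uniform square integrability of the standardized $\skt$, for which the parenthetical reduction to $\E|\skt-\E\skt|^4=O(n^2)$ when $nt\le1/2$ is proposed. Your opening reduction therefore reproduces exactly the paper's suggestion, and your honest conclusion that the bootstrap does not obviously close is precisely the authors' own assessment.

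That said, there is a concrete slip in the BDG step that is worth flagging because it is not a mere bookkeeping issue but is itself a manifestation of the gap. You write $\E\tmk(t)^4\le C\,\E\qtmkt^2$ (correct) and then estimate $\E\qtmkt^2$ from $\E\qtmkt=O(n)$ together with ``variance $\E\ttM_k(t)^2=O(n)$.'' But $\Var(\qtmkt)$ is \emph{not} $\E\ttmk(t)^2$: by \eqref{ttmkt} one has $\qtmkt=\ttmk(t)+\intot\tW_k(u)\dd u$, and the compensator $\intot\tW_k(u)\dd u$ is random (it is a polynomial in the $\skl$ via \eqref{w} and \eqref{tw}), so
\begin{equation*}
\Var\bigpar{\qtmkt}=\E\ttmk(t)^2+2\Cov\Bigpar{\ttmk(t),\intot\tW_k\dd u}+\Var\Bigpar{\intot\tW_k\dd u}.
\end{equation*}
To bound the last term you would need second moments of $\tW_k$, hence of $S_{a,b}$ with $a+b=2k+2$, hence fourth moments of the $\skt$ themselves --- which is the quantity you set out to bound. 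So the apparent $\E\qtmkt^2=O(n^2)$ is not established; it already presupposes the higher-moment control whose absence you correctly identify two paragraphs later as the main obstacle. Your closing suggestion (branching-process domination by a subcritical Borel variable, plus a decorrelation argument) is the route the authors allude to as ``to be treated elsewhere,'' and is indeed the most promising way around the circularity; but as written neither your argument nor the paper's contains a proof, and the statement remains, as labelled, a conjecture.
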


\begin{remark}
The purpose of 	introducing $\tsk$ in \eqref{tsk} is that if we argued
directly with $\sk$ and $M_k$, we would obtain an equation similar to
\eqref{emk}, but with $\yx k(u)$ in one of the integrals on the right
hand side.
Thus, to derive the asymptotic normality of $\yx(t)$ from the
asymptotic normality of the processes $M_k$, we would have to invert a
Volterra equation  (also for $k=2$). This is effectively what we do by
introducing $\tsk$.
\end{remark}

\section{The variance again}\label{Svar2}

In \refT{Tvar} we gave a simple upper of the variance for the variance
of 
$\skt$. We shall now, using a more involved argument, find the precise
asymptotics. 

\begin{theorem}
    \label{Tvar2}
For every $k,l\ge2$ and $0\le t<1/n$,
\begin{equation*}
  \Cov\bigpar{\skt,\slt}
=n\hpkl\unt+O\parfrac{nt}{(1-nt)^{2k+2l}},
\end{equation*}
where $\hpkl$ is a polynomial of degree $2k+2l-3$ given by
\begin{equation}
  \label{hpkl}
\hpkl(x)=\pp{k+l}(x)-\frac{\pp{k+1}(x)\pp{l+1}(x)}{x}.
\end{equation}
\end{theorem}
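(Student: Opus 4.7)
The plan is to reduce the problem to estimating $D(t) := \E S_k(t)\E S_l(t) - \E S_{k,l}(t)$, since by \eqref{skl} we have $\Cov(S_k,S_l) = \E S_{k+l} - D(t)$, and \refT{Tesk} already supplies $\E S_{k+l}(t) = np_{k+l}(x) + O(nt/(1-nt)^{2(k+l)})$ with $x := 1/(1-nt)$. Thus the target becomes
\begin{equation*}
D(t) = n\,p_{k+1}(x)p_{l+1}(x)/x + O\!\left(\frac{nt}{(1-nt)^{2k+2l}}\right).
\end{equation*}
The right tool for $D(t)$ is the conditional decomposition from the proof of \refL{L1}(i): with $I_A(t) := \ett{A\text{ is a component of }\gnt}$ and $s_l(N,t) := \E S_l(\cG(N,t))$, one has
\begin{equation*}
D(t) = \sum_{A \in \cA_n} |A|^k\,\P(I_A(t)=1)\,\bigl[s_l(n,t) - s_l(n-|A|,t)\bigr],
\end{equation*}
which I would analyse by Taylor-expanding the bracket in $N$ around $N=n$.

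The crucial structural ingredient is an identity for the leading coefficient. Writing $s_l(N,t) = g_l(N) + R_l(N,t)$ with polynomial part $g_l(N) := Np_l(y(N))$, $y(N) := 1/(1-Nt)$, a direct calculation using $y'(N) = ty^2$ and $Nty = y-1$ gives $g_l'(N) = p_l(y) + y(y-1)p_l'(y)$. This equals $p_{l+1}(y)/y$ by the identity
\begin{equation*}
p_{l+1}(y) = yp_l(y) + y^2(y-1)p_l'(y),
\end{equation*}
which can be verified against \refTab{Tabpk} for small $l$ and proved in general from the recursion \eqref{pk1}, or more conceptually from the Borel interpretation of \refS{Sborel}: for $B\sim\operatorname{Borel}(\lambda)$, differentiating the explicit probability mass function yields $(1-\lambda)\E[B^l] = \E[B^{l-1}] + \lambda(d/d\lambda)\E[B^{l-1}]$, and translating via $y=1/(1-\lambda)$ gives the displayed identity. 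Hence $g_l'(n) = p_{l+1}(x)/x$, and the first-order Taylor term contributes $g_l'(n)\cdot\E S_{k+1}(t) = (p_{l+1}(x)/x)\bigl(np_{k+1}(x) + O(nt/(1-nt)^{2k+2})\bigr) = np_{k+1}(x)p_{l+1}(x)/x + O(nt/(1-nt)^{2k+2l})$, exactly the claimed main term.

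Two residual errors must then be controlled. The higher-order Taylor remainder for $g_l$ is routine: a short computation gives $|g_l''(N)| = O(ty(N)^{2l-1}) = O(tx^{2l-1})$ uniformly for $N\le n$, so its contribution is at most $(1/2)\cdot O(tx^{2l-1})\cdot\E S_{k+2}(t) = O(nt/(1-nt)^{2k+2l})$ by \refL{Lesk}. The main obstacle is the contribution from the error $R_l$: the triangle inequality $|R_l(n,t) - R_l(n-|A|,t)| \le 2\max|R_l| = O(nt/(1-nt)^{2l})$ gives only $O(n^2t/(1-nt)^{2k+2l-3})$, which is much too weak. My plan here is to sharpen \refT{Tesk} by one iteration of its proof: substituting the leading approximation $s_l \approx g_l$ back into the ODE \eqref{esk'b} yields a refined decomposition $s_l(N,t) = g_l(N) + F_l(Nt) + R_l^{(2)}(N,t)$, where the new correction $F_l(\lambda)$ is an explicit function of $\lambda = Nt$ alone (obtained by integrating $\partial_t R_l \approx $ explicit) satisfying $|F_l'(\lambda)| = O((1-\lambda)^{-(2l+1)})$, and $R_l^{(2)}$ obeys a strictly better pointwise bound. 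Taylor-expanding $F_l$ in $N$ then yields $\sum_A |A|^k\P(I_A=1)[F_l(nt) - F_l((n-|A|)t)] = O(t/(1-nt)^{2l+1})\cdot\E S_{k+1} = O(nt/(1-nt)^{2k+2l})$, and the improved bound on $R_l^{(2)}$ makes the triangle inequality adequate. Assembling all pieces produces $\Cov(S_k,S_l) = n\bigl[p_{k+l}(x) - p_{k+1}(x)p_{l+1}(x)/x\bigr] + O(nt/(1-nt)^{2k+2l}) = n\hat P_{k,l}(x) + O(nt/(1-nt)^{2k+2l})$, as required.
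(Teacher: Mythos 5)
Your skeleton matches the paper's proof up to the crucial step: the same decomposition $\Cov(S_k(t),S_l(t))=\E S_{k+l}(t)+\sum_A|A|^k\P(I_A(t)=1)\bigl[\E S_l(\cG(n-|A|,t))-\E S_l(\cG(n,t))\bigr]$ is used there, the main term is likewise identified through the identity \eqref{pki} (so that the $N$-derivative of the polynomial part is $\pp{l+1}(x)/x=\pdl(x)$), and your second-order Taylor error $O(tx^{2l-1})\E S_{k+2}(t)=O(nt(1-nt)^{-2k-2l})$ is exactly one of the paper's two error terms. The genuine gap is precisely at the point you yourself flag as the main obstacle, and your proposed fix does not work: "iterating the ODE \eqref{esk'b} once" cannot produce a decomposition $s_l(N,t)=g_l(N)+F_l(Nt)+R_l^{(2)}$ with $F_l$ a function of $Nt$ alone and a strictly smaller remainder, because the error term in \eqref{esk'b} is not an explicit quantity. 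By \eqref{skl}, $\E S_{a,b}-\E S_a\E S_b=\Cov(S_a,S_b)-\E S_{a+b}$, so the next-order correction to $\E S_l(\cG(N,t))$ is governed by the covariances $\Cov(S_{j+1},S_{l-j+1})$ — the very quantities \refT{Tvar2} is computing. At this stage only \refT{Tvar} plus Cauchy--Schwarz is available, which gives the correct order $O(N(1-Nt)^{-2l-1})$ for these terms but not the structure ``$N$ times a function of $Nt$ plus a smaller error'' that your telescoping of $F_l(nt)-F_l((n-|A|)t)$ requires; establishing that structure is essentially equivalent to the theorem itself, and an induction on $k+l$ does not close the circle, since refining $\E S_l$ needs covariances with indices summing to $l+2$, which equals $k+l$ already in the base case $k=2$.

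The paper escapes this by differentiating in $N$ instead of refining the $t$-evolution: \refL{Ladd} computes $\E S_l(t;n+1)-\E S_l(t;n)$ directly, by adding one vertex to $\gnt$ and evaluating the expected increase of $S_l$ conditionally on the component structure, using only the first-moment estimates of \refL{LV1} and \refT{Tesk}; this yields the increment $\pdl\bigpar{1/(1-nt)}+O\bigpar{t/(1-nt)^{2l+1}}$ per added vertex with no covariance input. Summing over the $|A|$ added vertices and Taylor-expanding $\pdl$ at $1/(1-nt)$ then gives exactly the control of $\E S_l(t;n)-\E S_l(t;n-|A|)$ that your $F_l$, $R_l^{(2)}$ decomposition was meant to supply, after which the assembly is the same as yours. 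If you replace the ODE-iteration step by such a one-vertex-addition lemma, your argument becomes the paper's proof. (Minor point: the theorem is stated for all $0\le t<1/n$; for $1-nt<n^{-1/3}$ it should be disposed of separately as a trivial consequence of \refT{Tvar} and Cauchy--Schwarz, as the paper notes.)
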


Some polynomials $\hpkl$ are given in \refTab{Tabhpkl}. 
In particular, $\hpxx22(1/y)=2(1-y)/y^5$ and thus
\begin{equation}\label{vars2}
  \Var(\s2(t)) 
= \frac{2n^2t}{(1-nt)^{5}}\biggpar{1  +O\parfracc{1}{n(1-nt)^{3}}}.
\end{equation}
\begin{table}%[ht]
  \begin{align*}
\hpxx22(x)&=2\,{x}^{5}-2\,{x}^{4}\\
\hpxx33(x)&=96\,{x}^{9}-198\,{x}^{8}+126\,{x}^{7}-24\,{x}^{6}\\
\hpxx44(x)&=10170\,{x}^{13}-34050\,{x}^{12}+43520\,{x}^{11}-26192\,{x}^{10}+7272\,{x}^{9}-720\,{x}^{8}\\
\hpxx32(x)&=12\,{x}^{7}-18\,{x}^{6}+6\,{x}^{5}\\
\hpxx42(x)&=90\,{x}^{9}-190\,{x}^{8}+124\,{x}^{7}-24\,{x}^{6}\\
\hpxx43(x)&=900\,{x}^{11}-2430\,{x}^{10}+2322\,{x}^{9}-912\,{x}^{8}+120\,{x}^{7}  \end{align*}
  \caption{The polynomials $\hpkl(x)$ for $k,l\le4$.} \label{Tabhpkl}
\end{table}

For $1-nt<n\qqqw$, \refT{Tvar2} is a trivial (and uninteresting) consequence of
\refT{Tvar} and the \CSineq, so we assume in the sequel that 
$1-nt\ge n\qqqw$. 
We precede the proof by several lemmas; 
we begin by defining,
extending \eqref{skl},
\begin{equation*}
  \skkm(G)\=\sumx_{i_1,\dots,i_m} \ccc{i_1}^{k_1}\dotsm\ccc{i_m}^{k_m},
\end{equation*}
where $\sumx$ denotes the sum over distinct indices only.
Then, \cf{} \eqref{skl},
%\begin{equation}\label{v1}
  \begin{multline}\label{v1}
  \skkm(G)
=
S_{k_1,\dots,k_{m-1}}(G)\s{k_m}(G)
\\
-S_{k_1+k_m,\dots,k_{m-1}}(G)
\dots
-S_{k_1,\dots,k_{m-1}+k_m}(G),
  \end{multline}
%\end{equation}
where we subtract $m-1$ terms with $k_m$ added to one of
$k_1,\dots,k_{m-1}$.
For $G=\gnt$ we write $\skkm(t)$ and have the following estimate,
\cf{} \refL{L1}.

\begin{lemma}
  \label{LV1}
For each $k_1,\dots,k_m$ and $1-nt\ge n\qqqw$,
\begin{equation*}
  \E\skkm(t)=n^mp_{k_1}\dotsm p_{k_m}\unt\Bigpar{1+O\parfrac1{n(1-nt)^3}}.
\end{equation*}
\end{lemma}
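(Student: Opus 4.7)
My plan is to proceed by induction on $m$, with base case $m=1$ being immediate from \refT{Tesk}. The key combinatorial tool is the identity
\begin{equation*}
\prod_{j=1}^m \sk_j(t) \;=\; \sum_{\pi\in \cP([m])} S_{(k_B)_{B\in\pi}}(t),
\end{equation*}
where $\pi$ runs over set partitions of $[m]\=\{1,\dots,m\}$ and $k_B\=\sum_{j\in B}k_j$. This follows by induction on $m$ from the $m=2$ rewriting $\sk\s l = \skl+\s{k+l}$ of \eqref{skl} (equivalently, by iterating \eqref{v1}). Isolating the finest partition $\hat 0$ of singletons, which contributes $\skkm(t)$, yields
\begin{equation*}
\E \skkm(t) \;=\; \E \prod_j \sk_j(t) \;-\; \sum_{\pi\neq\hat 0} \E S_{(k_B)_{B\in\pi}}(t).
\end{equation*}

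I then sandwich the left side by matching upper and lower bounds. For the upper bound, I iterate the argument of \refL{L1}(i): conditioning on $A_1$ being a component, the restriction of \gnt{} to $[n]\setminus A_1$ is $\cG(n-|A_1|,t)$, whose expected $S_{k_2,\dots,k_m}$ is by \refL{L0} at most that of \gnt; repeating gives
\begin{equation*}
\E\skkm(t) \;\le\; \prod_j \E \sk_j(t) \;=\; n^m\prod_j\pp{k_j}\unt\Bigpar{1+O\parfrac1{n(1-nt)^3}},
\end{equation*}
the last equality by \refT{Tesk}. For the lower bound, I apply Harris's inequality (the $m$-function version of the FKG inequality used in \refL{L1}(ii)) to $\sk_1(t),\dots,\sk_m(t)$: each is increasing in the edge indicators of \gnt{} by \refL{L0}, so $\E\prod_j\sk_j(t) \ge \prod_j \E\sk_j(t)$. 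Combined with the displayed identity, this reduces the lower bound on $\E\skkm(t)$ to controlling the coarser-partition sum.

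By the inductive hypothesis, each term with $|\pi|<m$ satisfies $\E S_{(k_B)_{B\in\pi}}(t) \asymp n^{|\pi|}\prod_B \pp{k_B}\unt$. Since $\pp k$ has degree $2k-3$ (\refT{Tesk}), this product has degree $2\sum_jk_j-3|\pi|$, so each such term is of order $n^{|\pi|}(1-nt)^{3|\pi|-2\sum_j k_j}$; compared to the target $n^m(1-nt)^{3m-2\sum_j k_j}$, it is smaller by a factor at least $n(1-nt)^3\ge 1$ (by the subcritical assumption $1-nt\ge n\qqqw$). Summing over $\pi\neq\hat 0$ gives a total error of the required size $O\bigpar{n^m\prod_j\pp{k_j}\unt/(n(1-nt)^3)}$, and the upper and lower bounds meet.

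The chief technical step is the degree-counting above, which is routine but requires the precise information from \refT{Tesk}. The subtler conceptual point, which I view as the main obstacle, is that Harris's inequality cannot be applied directly to $\skkm(t)$: although each marginal $\sk_j(t)$ is monotone in the edges, the multi-index $\skkm(t)$ generally is not (for instance, $\s{1,1}=n^2-\s 2$ \emph{decreases} when an edge is added). This is exactly why the partition identity is indispensable: it expresses $\E\skkm(t)$ as the gap between $\E\prod_j\sk_j(t)$, where Harris legitimately applies, and a sum of strictly lower-order terms that the induction controls.
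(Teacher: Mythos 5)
Your overall architecture --- the partition identity $\prod_j S_{k_j}=\sum_\pi S_{(k_B)_{B\in\pi}}$, isolating the finest partition, Harris for the lower bound on $\E\prod_j S_{k_j}$, and induction plus degree counting for the coarser partitions --- is sound, and it is essentially an explicit unpacking of the paper's own (one-line) proof, which just cites \refT{Tesk}, the identity \eqref{v1} and induction on $m$. The Harris step and the degree count (each coarser-partition term smaller than the target by a factor $n(1-nt)^3\ge1$ per lost block) are correct.

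The genuine gap is in your upper bound. You assert $\E S_{k_1,\dots,k_m}(t)\le\prod_j\E S_{k_j}(t)$ by ``iterating \refL{L1}(i)'', and you justify the key comparison $\E S_{k_2,\dots,k_m}(\cG(n-|A_1|,t))\le \E S_{k_2,\dots,k_m}(\cG(n,t))$ by \refL{L0}. But \refL{L0} concerns only the single-index $S_k$, and the multi-index sums are \emph{not} monotone under the subgraph relation: your own observation about $S_{1,1}=n^2-S_2$ (or $S_{2,2}$, which equals $2$ for two isolated vertices and $0$ after adding the edge) shows this, so the cited lemma does not give the inequality and its pathwise version is false. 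This is the same non-monotonicity you flag in your last paragraph as the obstacle to applying Harris to $\skkm$; it equally blocks this use of \refL{L0} once $m\ge3$ (for $m=2$ the inner quantity is single-index and your step is literally \refL{L1}(i)). The repair is short and stays inside your induction: prove, for all $N$ and $t$, that $\E S_{k_1,\dots,k_m}(\cG(N,t))\le\prod_{j}\E S_{k_j}(\cG(N,t))$ by induction on $m$ --- condition on $A$ being a component as in \refL{L1}(i), first apply the induction hypothesis to the $(m-1)$-index sum on $\cG(N-|A|,t)$ to replace it by $\prod_{j\ge2}\E S_{k_j}(\cG(N-|A|,t))$, and only then apply \refL{L0} to each \emph{single-index} factor to compare with $\cG(N,t)$. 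With those two steps taken in this order, your sandwich closes and the rest of your argument goes through as written.
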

\begin{proof}
  Immediate by \refT{Tesk}, \eqref{v1} and induction over $m$.
\end{proof}

We write $\sk(t;n)$ when needed to show the number of vertices explicitly.

\begin{lemma}
  \label{Ladd}
For each $k\ge2$ and $1-nt\ge n\qqqw$,
\begin{equation}\label{ladd}
  \E\sk(t;n+1)-\E\sk(t;n)
=\pdk\unt+O\parfrac{t}{(1-nt)^{2k+1}},
\end{equation}
where $\pdk$ is a polynomial of degree $2k-2$ given by
\begin{equation}
  \pdk(x)\=\ppk(x)+(x^2-x)\ppk'(x)= x\qw\pp{k+1}(x).
\end{equation}
\end{lemma}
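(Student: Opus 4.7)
The plan is to couple $\gnt$ with $\cG(n+1,t)$: realize the latter by first sampling $\gnt$ on the vertex set $[n]$ and then adding a new vertex $n+1$ joined to each vertex of $[n]$ independently with probability $1-e^{-t}$. Labelling the components of $\gnt$ as $\cci$, setting $c_i := \ccci$, and letting $B_i := \ett{n+1 \text{ is joined to } \cci}$, the variables $B_i$ are, conditionally on $\gnt$, independent $\Be(q_i)$ with $q_i := 1-e^{-c_i t}$. The components of $\cG(n+1,t)$ are then $\{\cci : B_i = 0\}$ together with the merged component $\{n+1\}\cup\bigcup_{B_i=1}\cci$, of size $1+Z$ where $Z := \sum_i B_i c_i$. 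Hence
\begin{equation*}
\sk(t;n+1) - \sk(t;n) = (1+Z)^k - \sum_i B_i c_i^k,
\end{equation*}
so that $\E\sk(t;n+1) - \E\sk(t;n) = \E(1+Z)^k - \E\sum_i q_i c_i^k$.

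The next step is to expand both terms on the right. Using $q_i = \sum_{r\ge1}(-1)^{r+1}(c_i t)^r/r!$, the binomial expansion of $(1+Z)^k$, and the identity $B_i^a = B_i$ to reduce products of indicators, the right hand side becomes a multiple $t$-series whose coefficients have the form $\E\skkm(t;n)$ for various tuples. Writing $x := 1/(1-nt)$ and applying Lemma~\ref{LV1} in the form $\E\skkm(t;n) = n^m\prod_\ell\pp{k_\ell}(x)(1+O(1/(n(1-nt)^3)))$, together with the identity $nt = (x-1)/x$, I organise the leading contributions---those obtained by keeping only $r=1$ in every $q_i$-expansion---via the exponential generating function $\Phi(z,x) := \sum_{a\ge1}(\pp{a+1}(x)/a!)\,z^a$. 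A manipulation using the recursion~\eqref{pk1} then identifies
\begin{equation*}
\E(1+Z)^k = \pp{k+1}(x) + R_1, \qquad \E\sum_i q_i c_i^k = (x-1)\pdk(x) + R_2,
\end{equation*}
where $R_1, R_2$ collect the sub-leading contributions. Since $\pdk(x) = \pp{k+1}(x)/x$, the leading difference is $\pp{k+1}(x) - (x-1)\pdk(x) = \pdk(x)$, matching the claimed main term.

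The main obstacle is to establish $R_1 - R_2 = O(t/(1-nt)^{2k+1})$. The naive route of applying Theorem~\ref{Tesk} to $n+1$ through the identity $\E(1+Z)^k = \E\s{k+1}(t;n+1)/(n+1)$ yields only $O(t/(1-nt)^{2k+2})$, losing a factor of $1/(1-nt)$; the required sharper bound must come from a cancellation between sub-leading corrections in the expansions of $\E(1+Z)^k$ and $\E\sum_i q_i c_i^k$, parallel to the cancellation that at leading order produces the factor $1/x$ (i.e.\ $\pdk = \pp{k+1}/x$). The higher-order ($b_\ell\ge2$) contributions of each $q_i$-expansion are controlled using the a priori bounds $nt\le1$, $tx^2\le n^{-2/3}$ (from $1-nt\ge n^{-1/3}$), and $\pp{k+r}(x) = O(x^{2(k+r)-3})$; a careful term-by-term matching against the leading structure then delivers the needed cancellation.
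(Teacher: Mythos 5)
Your setup (adding the vertex $n+1$, the indicators $B_i$, and the identity $\sk(t;n+1)-\sk(t;n)=(1+Z)^k-\sum_iB_ic_i^k$) is exactly the paper's, but your argument has a genuine gap at the one place where the lemma has real content: the error bound $O\bigpar{t(1-nt)^{-(2k+1)}}$. You correctly observe that estimating $\E(1+Z)^k$ and $\E\sum_iq_ic_i^k$ separately (via \refT{Tesk} at $n+1$, or via \refL{LV1} applied to $t\s{k+1}(t)$) loses a factor $1/(1-nt)$, and you then assert that the loss is recovered by ``a cancellation between sub-leading corrections'' obtained by ``careful term-by-term matching'' --- but you never exhibit that cancellation, and it is precisely the heart of the proof. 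The resolution is elementary and combinatorial, not analytic: the top single-component term $\sum_iB_ic_i^k$ occurs with coefficient exactly $1$ inside $(1+Z)^k$ (from $Z^k$, using $B_i^a=B_i$; lower powers of $Z$ cannot produce it) and is subtracted exactly, so it cancels identically \emph{before} any expectations are taken. This is what the paper does by expanding $\gD\sk$ directly (note its formulas for $\gD\s2,\gD\s3$ contain no $c_i^k$ term in the single sum). After this cancellation, every remaining main term $t^mS_{a_1+1,\dots,a_m+1}(t)$ has either $m\ge2$ or $a_1\le k-1$, hence size $O\bigpar{(nt)^mx^{2\sum_\ell a_\ell-m}}$ with $2\sum_\ell a_\ell-m\le 2k-2$; the relative error $O\bigpar{1/(n(1-nt)^3)}$ from \refL{LV1}/\refT{Tesk}, together with $nt\le1$ and the $O(c_i^2t^2)$ corrections to $q_i$, then gives $O\bigpar{t(1-nt)^{-(2k+1)}}$ with no further matching needed. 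Without isolating this exact cancellation, your estimate of $R_1-R_2$ remains an unproven assertion.

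A second, related gap is the identification of the main term. You propose to show that the leading contributions to $\E(1+Z)^k$ sum to $\pp{k+1}(x)$ by ``a manipulation using the recursion~\eqref{pk1}'' through an exponential generating function; this polynomial identity (essentially the linear recursion~\eqref{pki}, i.e.\ the branching decomposition behind the Borel distribution) is nontrivial and is not carried out. The paper sidesteps it: it only records that the main terms form \emph{some} polynomial $\pdk$ of degree at most $2k-2$, identifies $\pdk(x)=\ppk(x)+(x^2-x)\ppk'(x)$ by iterating \eqref{ladd} $\eps n$ times and comparing with \refT{Tesk} at $(1+\eps)n$, and only then obtains $\pdk=x\qw\pp{k+1}$ from \eqref{pki} (proved independently in \refS{Sborel}) or from the symmetry of $\Cov(\skt,\slt)$. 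If you want to keep your route, a clean fix is a limiting identification (fix $x=1/(1-nt)$, let $n\to\infty$ so that $t\to0$ and all $O(t\,\cdot)$ errors vanish, then compare with \refT{Tesk}); but as written, both the identity and the crucial error bound are asserted rather than proved.
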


The formula \eqref{ladd} is, not surprisingly, essentially what 
a formal differentiation of \eqref{tesk} with respect to $n$ would give.

\begin{proof}
  Let $\gnt$ have the components $\cc1,\dots,\cc{K}$.
Add a new vertex and add edges to it with the correct probabilities,
and let $\gD S_k\=\sk(t;n+1)-\sk(t;n)$ be the resulting increase of
$\skt$.
Let $J_i$ be the indicator of the event that there is an edge between
the new vertex and $\cci$. Then
\begin{align*}
  \gD\s2 &=
1+\sum_i2\ccci J_i + \frac12\sumx_{i,j}2\ccci\cccj J_iJ_j,
\\
  \gD\s3 &=
1+\sum_i(3\ccci+3\ccci^2)J_i 
+ \frac12\sumx_{i,j}(3\ccci^2\cccj+3\ccci\cccj^2+6\ccci\cccj )J_iJ_j
\\&
%\phantom{= 1}{}
\qquad+ \frac16\sumx_{i,j,k} 6\ccci\cccj\ccc{k}J_iJ_jJ_k,
\end{align*}
and so on. Given the components $\cc1,\cc2,\dots$, the indicators
$J_i$ are independent with $\E J_i=1-e^{-\ccci t}=\ccci t+O(\ccci^2t^2)$.
Hence, for $k=2$, 
using $\ccci t\le nt <1$ to simplify terms like $\ccci^2t^2\cccj^2t^2$,
\begin{equation*}
  \E\bigpar{\gD\s2\mid\gnt}
= 1+2t\s2(t)+O(t^2\s3(t))+t^2S_{2,2}(t)+O(t^3S_{3,2}(t)).
\end{equation*}
Taking the expectation we find, using \refL{LV1},
\begin{equation}\label{eds2}
  \E\gD\s2
= 1+2nt\pp2\unt+(nt)^2\pp2\unt^2+O\parfrac{t}{(1-nt)^5}.
\end{equation}
The same argument applies to every $k$, and yields an expression for
$\E\gD\sk$ where the main terms are of the type
$c(nt)^m\pp{k_1+1}\dotsm\pp{k_m+1}\untx$, where $c$ is a positive
combinatorial constant, $0\le m<k$, $1\le k_i\le k-1$ and
$\sum_ik_i\le k$; the error terms are all $O(1/(n(1-nt)^3)$ of some
such terms.
The main terms are polynomials in $1/(1-nt)$ of degree
$\sum_i(2k_i-1)\le2k-2$, so the result can be written as \eqref{ladd}
for some polynomial $\pdk$.

To identify $\pdk$, fix $y\in(0,1/2)$ and a rational $\eps\in(0,1)$,
consider only $n$ such that $\eps n$ is an integer and let $t=y/n$ and
repeat \eqref{ladd} $\eps n$ times. This yields
\begin{equation*}
  \E\sk(t;(1+\eps)n) -\E\sk(t;n)
=\eps n\Bigpar{\pdk\uny+O(\eps)} +O(\eps),
\end{equation*}
and thus, by \refT{Tesk},
\begin{equation*}
(1+\eps)n\ppk\parfrac{1}{1-(1+\eps)y} -n\ppk\uny
=
\eps n\pdk\uny+O(\eps^2n) %+O(\eps)
+O(1).
\end{equation*}
Divide by $n$ and let \ntoo; this gives
\begin{equation*}
\eps \pdk\uny
=
(1+\eps)\ppk\parfrac{1}{1-(1+\eps)y} -\ppk\uny
+O(\eps^2).
\end{equation*}
Divide by $\eps$ and let $\eps\to0$; this gives, with $x=1/(1-y)$,
\begin{equation*}
\pdk(x)
=
\ppk(x)+\frac{y}{(1-y)^2}\ppk'(x)
=\ppk(x)+(x^2-x)\ppk'(x).
\end{equation*}

The final identification of this as $x\qw\pp{k+1}(x)$ follows by
\eqref{pki} proved in \refS{Sborel} below.
Alternatively, the proof of \refT{Tvar2} below and the symmetry of
$\Cov(S_k,S_l)$ shows that
$\pp{k+l}-\pp{k+1}\pd{l}=\pp{k+l}-\pp{l+1}\pd{k}$, and thus, choosing
$l=2$,
$\pdk=\pp{k+1} \cdot\pd2/\pp3$, which yields the formula, since it
follows from \eqref{eds2} that $\pd2(x)=x^2$.
(This thus gives an alternative proof of \eqref{pki}.)
\end{proof}

\begin{proof}[Proof of \refT{Tvar2}]
Let $\cA_n$ and $I_A(t)$ be as in the proof of \refL{L1}. Conditioned on
$I_A(t)=1$, the complement of $A$ is a random graph
equivalent to $\cG(n-|A|,t)$. Thus,
\begin{equation*}
  \begin{split}
\Cov\bigpar{\skt,\slt}
=\E\Bigpar{\sum_{A\in\cA_n}|A|^k I_A(t) \sum_{B\in\cA_n}|B|^l I_B(t)}
-\E\skt\E\slt
\\
=\E\sum_{A\in\cA_n}|A|^{k+l} I_A(t)
+
\E\sum_{A\in\cA_n}|A|^k I_A(t)
 \Bigpar{\sum_{B\cap A=\emptyset}|B|^l I_B(t)-\E\slt}
\\
=\E\s{k+l}(t)
+
\E\sum_{A\in\cA_n}|A|^k I_A(t)
 \Bigpar{\E\s l(t;n-|A|)-\E\s l(t;n)}.
  \end{split}
\end{equation*}
By \refL{Ladd}, for some $\theta\in\oi$,
\begin{equation*}
  \begin{split}
\E\s l(t;n)-\E\s l(n;n-|A|)
=\aaa\pdl\parfrac1{1-nt+\theta\aaa t} + O\parfrac{\aaa t}{(1-nt)^{2l+1}}
\\
=\aaa\pdl\parfrac1{1-nt} 
+ O\parfrac{ t\aaa^2}{(1-nt)^{2l-1}}
+ O\parfrac{t\aaa }{(1-nt)^{2l+1}}.
  \end{split}
\end{equation*}
Consequently,
\begin{multline*}
  \Cov(\skt,\slt)
=
\E\s{k+l}(t)-\E\s{k+1}(t)\pd l\unt 
\\
+ O\Bigpar{\frac{t}{(1-nt)^{2l-1}}\E\s{k+2}(t)}
+O\Bigpar{\frac{t}{(1-nt)^{2l+1}}\E\s{k+1}(t)},
\end{multline*}
and the result follows by \refT{Tesk}.
\end{proof}

In the case $nt\to1$, only the leading term of $\hpkl$ is significant 
in \refT{Tvar2}. Since the
leading term of $\ppk$ is $(2k-5)!!\,x^{2k-3}$, as follows by
\eqref{pki} in \refS{Sborel}, we have the following corollary.

\begin{corollary}
  \label{Cvar2}  
For every $k,l\ge2$, if $nt\to1$ with $1-nt\gg n\qqqw$, then
\begin{equation*}
  \Cov\bigpar{\skt,\slt}
%\sim n\hpkl\unt
\sim 
\ckl n(1-nt)^{3-2k-2l},
\end{equation*}
with $\ckl\={(2k+2l-5)!! -(2k-3)!!\,(2l-3)!!} $.
\end{corollary}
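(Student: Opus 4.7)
The plan is to read off the leading order from \refT{Tvar2}, which already supplies
\begin{equation*}
\Cov(\skt,\slt) = n\, \hpkl\unt + O\parfracc{nt}{(1-nt)^{2k+2l}},
\end{equation*}
with $\hpkl(x) = \pp{k+l}(x) - \pp{k+1}(x)\pp{l+1}(x)/x$. So the task is essentially algebraic: identify the leading coefficient of $\hpkl$, and then check that both sources of error are negligible under $nt\to1$, $1-nt\gg n\qqqw$.

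First, I would use the fact (mentioned just before the corollary, and following from \eqref{pki} in \refS{Sborel}) that $\ppk$ has degree $2k-3$ with leading coefficient $(2k-5)!!$. Then $\pp{k+l}(x)$ has degree $2k+2l-3$ with leading coefficient $(2k+2l-5)!!$, while $\pp{k+1}(x)\pp{l+1}(x)/x$ has degree $(2k-1)+(2l-1)-1 = 2k+2l-3$ with leading coefficient $(2k-3)!!\,(2l-3)!!$. Hence
\begin{equation*}
\hpkl(x) = \ckl\, x^{2k+2l-3} + O\bigpar{x^{2k+2l-4}} \quad\text{as } x\to\infty,
\end{equation*}
with $\ckl = (2k+2l-5)!! - (2k-3)!!\,(2l-3)!!$, exactly as claimed.

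Second, substituting $x = 1/(1-nt)$ yields
\begin{equation*}
n\,\hpkl\unt = \ckl\, n(1-nt)^{3-2k-2l} + O\bigpar{n(1-nt)^{4-2k-2l}},
\end{equation*}
so the subleading polynomial terms are smaller than the main term by a factor $(1-nt)\to 0$. Comparing the main term with the remainder $O(nt/(1-nt)^{2k+2l})$ coming from \refT{Tvar2}, the ratio is $O\bigpar{1/(n(1-nt)^3)}=o(1)$ under our hypothesis. Both error sources are therefore absorbed into the $\sim$, and the corollary follows.

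The only point deserving a brief sanity check is that $\ckl\neq 0$ (otherwise the $\sim$ would be vacuous); this is immediate from the strict inequality $(2k+2l-5)!!>(2k-3)!!\,(2l-3)!!$ for $k,l\ge2$, since every factor on the left exceeds the corresponding factor on the right in the product of odd integers. There is no real obstacle here: the corollary amounts to extracting the top coefficient of $\hpkl$ once the leading coefficient of $\ppk$ is in hand, plus a routine comparison of error scales.
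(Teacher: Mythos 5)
Your proposal is correct and follows the paper's own route: apply \refT{Tvar2}, note that the leading coefficient of $\ppk$ is $(2k-5)!!$ (via \eqref{pki}), so the top coefficient of $\hpkl$ is $(2k+2l-5)!!-(2k-3)!!\,(2l-3)!!$, and check that both the lower-order polynomial terms and the $O(nt/(1-nt)^{2k+2l})$ error are negligible since $1-nt\to0$ and $n(1-nt)^3\to\infty$. Your extra verification that $\ckl>0$ (which the paper leaves implicit) is a welcome addition, though the pairing of factors should be done after sorting, e.g.\ matching $1,3,\dots,2l-3$ against $2k-1,2k+1,\dots,2k+2l-5$ termwise, rather than claiming every left factor exceeds its counterpart.
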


In particular, under these conditions, 
\begin{align*}
\Var(\s2(t))&\sim2n(1-nt)^{-5},\\
\Var(\s3(t))&\sim 96\,n(1-nt)^{-9},\\
\Var(\s4(t))&\sim 10170\,n(1-nt)^{-11},
\end{align*}
\cf{} \refTab{Tabhpkl} and \eqref{vars2}.

\begin{proof}[Proof of \refT{T2}, asymptotic variance]  
Immediate from \refT{TS2}, see \eqref{vars2}.
%,  by our usual relation $\sus(\gnp)=n\qw\s2(-\log(1-p))$. 
\end{proof}

\section{The Borel distribution}\label{Sborel}

Let $T(z)$
be the \emph{tree function} 
\begin{equation*}%\label{t}
T(z)\=\sum_{j=1}^\infty\frac{j^{j-1}z^j}{j!},
\qquad |z|\le e^{-1},
\end{equation*}
and recall the well-known formulas $T(z)e^{-T(z)}=z$
($|z|\le e^{-1}$),
$T(\ga e^{-\ga})=\ga$ ($0\le\ga\le1$),
and
\begin{equation}
\label{tprim}
T'(z)=\frac{T(z)}{z\bigpar{1-T(z)}}.
\end{equation}

A random variable $\ba$ has the
\emph{Borel distribution} $\Bo(\gl)$ 
with parameter $\gl\in\oi$
if
\begin{equation}
\label{borel}
\P(\ba=j)=\frac{j^{j-1}}{j!}\gl^{j-1}e^{-j\gl}
=\frac1{T(\gl e^{-\gl})} \frac{j^{j-1}}{j!}(\gl e^{-\gl})^j,
\qquad j=1,2,\dots
\end{equation}
%($\ba$ always denotes such a variable.)
The \pgf{} of the Borel distribution is thus
\begin{equation}
\label{borelg}
\E z^{\ba}=\sum_{l=1}^\infty\P(\ba=l)z^l
=\frac{T(\gl e^{-\gl}z)}{T(\gl e^{-\gl})}
=\frac{T(\gl e^{-\gl}z)}{\gl }.
\end{equation}
It is well-known that $\Bo(\gl)$ is 
the distribution of the total progeny of a Galton--Watson branching
process where each individual has $\Po(\gl)$ children; for this and related
results, see \eg{}
\cite{Borel,Otter,Kendall,Tanner,Takacs:Comb,Dwass,Takacs:ballots,Pitman:enum,
  JohnsonKK}.

Now consider $\gnp$ with $p=\gl/n$ for a fixed $\gl<1$, and let $\cc v$
be the component containing a fixed vertex $v$. It is
easily seen that as \ntoo, for every fixed $j\ge1$,
$\P(\ccc v=j)\to\P(\ba=j)$ given by \eqref{borel},
either by the usual branching process approximation and the result
just quoted, or by 
a direct estimation of the probability, using 
Cayley's formula for the number of trees of order $j$
and
the fact that \whp{} the component $\cc v$ is a tree. 
In other words, $\ccc v\dto\ba$. For any integer $m$, the moment
$\E\ccc v^m=\E\s{m+1}(\gnp)/n$, and \refT{Tesk} shows, with $t=-\log(1-p)$
and thus $nt\to\gl$, that
\begin{equation*}
\E\ccc v^m
=\frac{\E\s{m+1}(G(n,\gl/n))}{n}
\to\pp{m+1} \ul.  
\end{equation*}
Since thus $\ccc v$ converges in distribution and all moments converge
(to finite limits),
the moments have to converge to the moments of the limit
distribution. We have thus shown the following.

\begin{theorem}\label{TBorel}
  The polynomials $\ppk$ describe the moments of the Borel
  distribution $\Bo(\gl)$ by the formula
\begin{equation*}
\E\ba^m
=\pp{m+1} \ul,
\qquad m\ge1.
\end{equation*}
\end{theorem}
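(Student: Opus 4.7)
The plan is to fix $\gl\in[0,1)$, work in $\gnp$ with $p=\gl/n$, and identify $\E\ba^m$ as the common value of $\lim_{\ntoo}\E\ccc v^m$ computed in two different ways: algebraically using \refT{Tesk} on one side, and probabilistically via the branching-process approximation on the other. A method-of-moments (uniform integrability) step will then yield the formula.

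The first, algebraic computation is essentially bookkeeping. By vertex symmetry,
\[
\E\ccc v^m \;=\; \frac1n\sum_{w=1}^n\E|\cc w|^m
\;=\; \frac1n\E\sumiK\ccci^{m+1}
\;=\; \frac{\E\s{m+1}(\gnp)}{n}.
\]
Setting $t\=-\log(1-p)=\gl/n+O(n\qww)$, so that $\gnp=\gnt$ and $nt\to\gl<1$, I would then apply \refT{Tesk} with $k=m+1$ to conclude $\E\ccc v^m\to\pp{m+1}\ul$ as \ntoo.

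For the probabilistic computation, I would establish $\ccc v\dto\ba$ by a standard coupling of the breadth-first exploration of $\cc v$ with a $\Po(\gl)$-Galton--Watson tree: conditional on seeing $j$ vertices so far, the number of new neighbours of the next explored vertex is $\Bi(n-j,p)\dto\Po(\gl)$, and the total progeny of the Galton--Watson tree has distribution $\Bo(\gl)$ by the classical Borel--Tanner theorem. (Alternatively, one can verify $\P(\ccc v=j)\to\P(\ba=j)$ directly for each fixed $j$ using Cayley's formula for the number of trees of order $j$ together with the observation that the probability $\cc v$ has order $j$ and contains a cycle is $O(n\qw)$, so the tree case dominates and matches \eqref{borel} exactly.)

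To close, I would upgrade distributional convergence to convergence of moments. Applying the first step with $m$ replaced by $m+1$ gives $\sup_n\E\ccc v^{m+1}<\infty$, hence $(\ccc v^m)_n$ is uniformly integrable; together with $\ccc v\dto\ba$ this yields $\E\ccc v^m\to\E\ba^m$. Comparison with the limit $\pp{m+1}\ul$ from the algebraic side then completes the proof. The only step that is not pure bookkeeping is the distributional convergence $\ccc v\dto\ba$, but this is classical subcritical random graph theory and presents no real obstacle.
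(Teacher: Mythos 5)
Your proposal is correct and follows essentially the same route as the paper: identify $\E\ccc v^m$ with $\E\s{m+1}(\gnp)/n$ and apply \refT{Tesk}, establish $\ccc v\dto\ba$ by the branching-process (or Cayley's formula) argument, and pass from convergence in distribution plus convergence of all moments to convergence of moments of the limit. Your explicit uniform-integrability step via boundedness of $\E\ccc v^{m+1}$ merely spells out what the paper leaves as a standard remark, so there is no substantive difference.
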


For example, as is well-known,
$\E \ba=(1-\gl)\qw$ and $\E \ba^2=(1-\gl)^{-3}$.

\begin{remark}
  By \refT{TBorel}, \refC{Cvar} can be written 
  \begin{equation*}
\skt\simp n\E \bax{nt}^{k-1},
\qquad 1-nt\gg n\qqqw.	
  \end{equation*}
This is not surprising since we have $S_k(G)=\sum_v\ccc v^{k-1}$, 
%where $\cc v$ denotes the component containing vertex $v$ in $G$, 
and we
expect only a weak dependence between the components $\cc v$ in this
range, so this is a kind of law of large numbers.
\end{remark}

Let, \cf{} \eqref{borelg}, for $|t|$ small enough,
  \begin{equation}\label{mgf}
\psitl=\E e^{t\ba}=\sum_{m=0}^\infty\frac{t^m}{m!}\E\ba^m
= \frac{\TT}{\gl}
  \end{equation}
be the moment generating function of $\ba\sim\Bo(\gl)$. 
The moments of $\ba$ can be obtained by differentiation of $\psitl$ at $t=0$.

\begin{lemma}
  \label{LBomom}
For each $m\ge0$ there exists a polynomial $\pr m$ such that
\begin{equation}
  \label{rm}
\frac{\dd^m}{\dd t^m} \psitl
=\frac{\TT}{\gl}\prm\parfrac{1}{1-\TT}.
\end{equation}
We have $\pr0(x)=1$, $\pr0(x)=x$, and
\begin{equation}
  \label{rmi}
\pr{m+1}(x)=x\prm(x)+(x^3-x^2)\prm'(x),
\qquad m\ge0.
\end{equation}
\end{lemma}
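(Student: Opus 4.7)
The plan is to prove the lemma by induction on $m$, with the key ingredient being a clean expression for how $\TT$ evolves as a function of $t$.

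The base case $m=0$ is the formula \eqref{mgf}, which gives $\pr0(x)=1$. (The ``$\pr0(x)=x$'' in the statement is a typo for $\pr1(x)=x$.) For $m=1$, I would first compute $\dd/\dd t\,\TT$: writing $z(t)\=\gl e^{-\gl}e^t$, we have $z'(t)=z(t)$, so by \eqref{tprim}
\begin{equation*}
\frac{\dd}{\dd t}\TT = T'(z(t))\,z(t) = \frac{\TT}{1-\TT}.
\end{equation*}
Dividing by $\gl$ gives $\psi'(t;\gl)=\TT/(\gl(1-\TT))$, so $\pr1(x)=x$.

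The inductive step reduces to a chain-rule calculation. Set $u\=\TT$ and $x\=1/(1-u)$, so that $u=(x-1)/x$. From the computation above, $\dd u/\dd t = u/(1-u) = x-1$, and consequently
\begin{equation*}
\frac{\dd x}{\dd t} = \frac{1}{(1-u)^2}\cdot\frac{\dd u}{\dd t} = x^2(x-1)=x^3-x^2.
\end{equation*}
Assuming inductively that $\dd^m\psi/\dd t^m = (u/\gl)\prm(x)$, I differentiate once more using the product and chain rules:
\begin{equation*}
\frac{\dd^{m+1}}{\dd t^{m+1}}\psitl
=\frac{1}{\gl}(x-1)\prm(x) + \frac{u}{\gl}\prm'(x)(x^3-x^2).
\end{equation*}
Since $(x-1)/u = x$, the right-hand side equals $(u/\gl)\bigpar{x\prm(x)+(x^3-x^2)\prm'(x)}$, which gives the recursion \eqref{rmi} and shows that $\pr{m+1}$ is again a polynomial.

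There is no real obstacle here: the only point that requires care is noticing the identity $\dd u/\dd t = u/(1-u)$, which comes directly from \eqref{tprim} together with the fact that $z(t)=\gl e^{-\gl}e^t$ satisfies $z'=z$. Once this is in hand, reparametrizing by $x=1/(1-u)$ turns the derivative into a polynomial operation on $\prm$, and the induction runs automatically.
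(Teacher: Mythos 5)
Your proof is correct and follows essentially the same argument as the paper: both rely on the observation that $\frac{\dd}{\dd t}\TT=\frac{\TT}{1-\TT}$ (from \eqref{tprim} together with $z'(t)=z(t)$), followed by the chain rule applied in the reparametrization $x=1/(1-\TT)$; your version simply makes the substitution $u,x$ and the derivative $\dd x/\dd t = x^3-x^2$ explicit, where the paper carries out the equivalent computation directly in the variable $T$. You also correctly identify the ``$\pr0(x)=x$'' in the statement as a typo for $\pr1(x)=x$.
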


\begin{proof}
  For $m=0$, \eqref{rm} is just \eqref{mgf}.

Suppose that \eqref{rm} holds for some $m\ge0$. Then, by the chain
rule and \eqref{tprim}, with $T=\TT$,
\begin{equation*}
  \begin{split}
\frac{\dd^{m+1}}{\dd t^{m+1}} \psitl
&=\frac{\dd}{\dd T}
\Bigpar{\frac{T}{\gl}\prm\parfrac{1}{1-T}}	
\cdot\frac{T}{1-T}
\\&
=\frac1\gl\frac{T}{1-T}\prm\parfrac{1}{1-T}
+\frac{T^2}{\gl(1-T)^3}\prm'\parfrac{1}{1-T}
\\&
=\frac{T}\gl
\lrpar{\frac{1}{1-T}\prm\parfrac{1}{1-T}
+\Bigpar{\frac1{(1-T)^3}-\frac1{(1-T)^2}}\prm'\parfrac{1}{1-T}},
  \end{split}
\end{equation*}
which verifies \eqref{rm} for $m+1$ with $\pr{m+1}$ given by \eqref{rmi}.
\end{proof}

Since $\pr1(x)=x$, it follows from \eqref{rmi} by induction that
$\prm$ has degree $2m-1$ for $m\ge1$.

Setting $t=0$ in \eqref{rm} yields
\begin{equation*}
\E\ba^m
=\frac{\dd^{m}}{\dd t^{m}} \psitl\bigrestr{t=0}
=\pr{m} \ul,
\qquad m\ge0.
\end{equation*}
Consequently, \refT{TBorel} shows that 
\begin{equation}
 \prm(x)=\pp{m+1}(x), 
\qquad m\ge1.
\end{equation}
In particular, \eqref{rmi} yields the simple linear recursion
\begin{equation}
  \label{pki}
\pp{k+1}(x)=x\ppk(x)+(x^3-x^2)\ppk'(x),
\qquad k\ge2.
\end{equation}
It is evident from \eqref{pki} and induction that, for $k\ge2$, the
leading term of $\ppk$ is $(2k-5)!!\,x^{2k-3}$ 
(with the standard interpretation $(-1)!!=1$)
and that 
for $k\ge3$, 
the lowest
order non-zero term is $(-1)^{k-1}(k-2)!\,x^k$, see \refTab{Tabpk}.

\begin{remark}
The quadratic recursion \eqref{pk1} can be seen to be equivalent to 
the quadratic partial differential equation
\begin{equation*}
\frac{\partial}{\partial\gl} \psitl
=\bigpar{\psitl-1}
\frac{\partial}{\partial t} \psitl,
\end{equation*}
while the linear recursion \eqref{pki} is equivalent to 
the linear partial differential equation
\begin{equation*}
\frac{\partial\psi}{\partial t} (t;\gl)
=\ulx\psitl+\frac{\gl}{1-\gl}\frac{\partial\psi}{\partial \gl}(t;\gl).
\end{equation*}
\end{remark}

\begin{remark}
  By \refT{TBorel},
the recursion \eqref{pk1} can be written
\begin{equation*}
\frac{\dd}{\dd\gl}\E\ba^{k-1}
=(1-\gl)\qww\ppk'\ul
=\frac1{2}\sumlki\binom kl \E\ba^{l}\E\ba^{k-l},
\end{equation*}
or, if $\ba'$ and $\ba''$ are independent copies of $\ba$, 
using $\frac{\dd}{\dd\gl}\P(\ba=j)=\bigpar{\frac{j-1}{\gl}-j}\P(\ba=j)$
from \eqref{borel},
\begin{equation*}
  \begin{split}
  \E(\ba'+\ba'')^k
&=2\E\ba^k+2\frac{\dd}{\dd\gl}\E\ba^{k-1}
\\&
=\sum_{j=1}^\infty\P(\ba=j)\cdot
 \Bigpar{2j^k+2j^{k-1}\bigpar{\frac{j-1}{\gl}-j}}	
\\&
=\sum_{j=1}^\infty\P(\ba=j)\cdot\frac{2(j-1)}{j\gl}j^k,
  \end{split}
\end{equation*}
which is equivalent to the well-known formula 
%for the distribution of the sum of two independent
\begin{equation*}
  \P(\ba'+\ba''=j)
=\frac{2(j-1)}{j\gl}\P(\ba=j)
%=2\frac{(j-1)j^{j-2}}{j!}\gl^{j-2}e^{-j\gl}
%=2\frac{j^{j-2}}{j(j-2)!}\gl^{j-2}e^{-j\gl}
=2\frac{j^{j-3}}{(j-2)!}\gl^{j-2}e^{-j\gl},
\qquad j\ge2;
\end{equation*}
see \eg{} \cite{Tanner,Takacs:ballots,Pitman:enum,JohnsonKK} and note that
$\ba'+\ba''$ can be seen as the total progeny of a Galton--Watson process
with $\Po(\gl)$ offspring started with 2 individuals, or as the limit
distribution of $|\cc v\cup\cc w|$ if $\cc v$ and $\cc w$ are the
components containg two given vertices in $G(n,\gl/n)$.
\end{remark}

\begin{remark}
  The \emph{cumulants} $\gk_m$ of the Borel distribution $\Bo(\gl)$
  are 
the Taylor coefficients of $\log\psitl$ at $t=0$
(times $m!$).
Since $T(z)=ze^{T(z)}$, \eqref{mgf} yields
  \begin{equation*}
\log\psitl= \TT-\gl+t=\gl\psitl-\gl+t,
  \end{equation*}
and thus
\begin{equation*}
  \gk_m(\ba)
=\frac{\dd^{m}}{\dd t^{m}}\log \psitl\bigrestr{t=0}
=\gl\E\ba^m
=\gl\pp{m+1}\ul,
\qquad m\ge2,
\end{equation*}
while, of course, $\gk_1(\ba)=\E\ba=(1-\gl)\qw$.
\end{remark}

We can  interpret the asymptotic covariances and the polynomials 
$\hpkl$ in \refS{Svar2} by introducing the 
\emph{size-biased Borel distribution} $\hba$ defined by
\begin{equation}
  \P(\hba=j)=\frac{j\P(\ba=j)}{\E\ba}
=(1-\gl)\frac{j^{j}}{j!}\gl^{j-1}e^{-j\gl}.
\end{equation}
Then 
\begin{equation}
  \E\hba^m=\E\ba^{m+1}/\E\ba
=(1-\gl)\pp{m+2}\ul,
\qquad m\ge0,
\end{equation}
and thus, by \eqref{hpkl},
\begin{equation}
  \hpkl\unt = \untx \Cov\bigpar{\hbnt^{k-1},\hbnt^{l-1}}.
\end{equation}

Hence, by \refT{Tvar2},
the random variables $n\qqw(1-nt)\qq\skt$, $k\ge2$, 
have asymptotically the same covariance structure as $\hbnt^{k-1}$.

\appendix

\section{The supercritical case}\label{Asuper} 

Consider $\gnp$ with $np-1\gg n\qqqw$. It is well-known, see \eg{}
\cite[Chapter 5]{JLR}, that \whp{} $\gnp$ has a unique giant
component. More precisely, there is a deterministic function $\rho>0$
on $(1,\infty)$ such that, if the components $\cc1,\cc2,\dots$ of
\gnp{} are ordered with $\ccc1\ge\ccc2\ge\dots$, then 
$\ccc1\simp n\rho(np)\gg n\qqa$, while $\ccc2=\op(n\qqa)$.
The function $\rho(\gl)$ is the survival probability of a
Galton--Watson branching process with $\Po(\gl)$ offspring, and is
given by the equation
\begin{equation}\label{rho}
  \rho(\gl)=1-e^{-\gl\rho(\gl)}.
\end{equation}

The largest component is thus much larger than the others, and it
turns out that it dominates all other terms in the sums $\sk$.
We write in this appendix $\sknp$ for $\sk(\gnp)$, and continue to
let $\cc1$ denote the largest component of \gnp{}.

\begin{theorem}
  \label{Tsuper}
If $np-1\gg n\qqqw$, then for every $k\ge2$,
\begin{equation*}
  \sknp =\ccc1^k+\Op\parfracc{n}{(np-1)^{2k-3}}
\simp\ccc1^k
\simp\bigpar{n\rho(np)}^k.
\end{equation*}
\end{theorem}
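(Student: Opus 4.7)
The plan is to use the standard \emph{duality} for the \ER{} random graph: once the giant component is removed, the remainder behaves like a subcritical random graph, at which point the upper bound from \refL{Lesk} controls the non-giant contribution.

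First I would write $\sknp=\ccc1^k+\tilde S_k$ with $\tilde S_k\=\sum_{i\ge2}\ccci^k$, and reduce the theorem to showing $\tilde S_k=\Op\bigpar{n/(np-1)^{2k-3}}$. Combined with the cited $\ccc1\simp n\rho(np)$, this gives the additive error. Using \eqref{rho} and a Taylor expansion one has $\rho(\gl)\sim 2(\gl-1)$ as $\gl\downto1$, so $n\rho(np)\asymp n(np-1)$ near criticality; the inequality $(n\rho(np))^k\gg n/(np-1)^{2k-3}$ is then equivalent to $n(np-1)^3\gg1$, \ie{} exactly our hypothesis $np-1\gg n\qqqw$. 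Hence the two $\simp$ assertions follow from the additive one.

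For the bound on $\tilde S_k$, I would reveal the giant component $\cc1$ in a first round of exposure and then expose the remaining edges. Conditional on $\cc1$, the induced graph on $V\setminus\cc1$ is distributed as $\cG(n-\ccc1,p)$ conditioned on an event about component sizes. Its effective parameter is $\mu\=(n-\ccc1)p\simp np(1-\rho(np))$, and from the defining equation $\rho=1-e^{-np\rho}$ one sees $1-\mu\asymp np-1$, placing this companion firmly in the subcritical regime. On the \whp-event $\cE\=\{|\ccc1-n\rho(np)|\le\eps n\}$ (with $\eps$ a small multiple of $np-1$), applying \refL{Lesk} to the unconditioned $\cG(n-\ccc1,p)$ yields $\E[\tilde S_k\mid\cc1]=O(n/(np-1)^{2k-3})$, and Markov's inequality delivers the required $\Op$ bound.

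The main obstacle is making the duality step fully rigorous: the complement of $\cc1$ is not literally $\cG(n',p)$ for a fixed $n'$, the conditioning on components being no larger than the giant is awkward, and the number of non-giant vertices is itself random. I would handle this by a sandwich coupling with $\cG(\lfloor n(1-\rho(np))\pm\eps n\rfloor,p)$ using the monotonicity \refL{L0}, so that only the subcritical estimate of \refL{Lesk} at parameter $\mu$ is actually invoked on $\cE$; the behaviour off $\cE$ is absorbed into the $\Op$ since $\P(\cE^c)\to0$.
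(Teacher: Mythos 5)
Your route --- condition on the giant, use duality to view the rest as a subcritical random graph, and invoke \refL{Lesk} --- is genuinely different from the paper's, but as written it has a real gap: it does not give the claimed error bound uniformly over the whole range $np-1\gg n^{-1/3}$. Your event $\cE=\{|\ccc1-n\rho(np)|\le\eps n\}$ with $\eps$ a small multiple of $np-1$ is tailored to the barely supercritical regime. When $np\to\infty$ the theorem's error term $n/(np-1)^{2k-3}$ is $o(n)$ (indeed $o(1)$ once $np\gg n^{1/(2k-3)}$), whereas your event only guarantees that at most $n(1-\rho(np))+\eps n$ vertices lie outside the giant; with $\eps\asymp np-1$ (or even a fixed small constant) the sandwich graph $\cG(n(1-\rho(np))+\eps n,p)$ has expected degree roughly $\eps np\to\infty$, so it is supercritical and \refL{Lesk} does not apply, and in any case the best \refL{Lesk} could ever give is $O(\text{number of remaining vertices})$, which is far larger than $n/(np-1)^{2k-3}$ in this regime. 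To cover large $np$ you need the additional input that \whp{} all but $O\bigpar{n(np)^{-(2k-3)}}$ vertices belong to the giant; this is precisely what the paper's \refL{Lsuper}, parts (iii) and (iv) (the estimate $1-\ga(\gl)=O(\gl^{-m})$, proved by a two-round exposure argument), supplies before \refL{Lesk} is used. Your argument, with an appropriate small constant in $\eps$, proves the theorem for $np=O(1)$, but not beyond without some such ingredient.

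A second, smaller issue is the duality step you flag yourself. Conditional on the vertex set of the largest component, the remaining graph is $\cG(n-\ccc1,p)$ conditioned on the \emph{decreasing} event that no component exceeds $\ccc1$, and \refL{L0} (monotonicity under subgraphs and added vertices) alone does not remove that conditioning. What does is Harris/FKG: conditioning a product measure on a decreasing event stochastically decreases every increasing functional such as $S_k$, so $\E\bigsqpar{\tilde S_k\mid\cc1}\le\E S_k\bigpar{\cG(n-\ccc1,p)}$; this is available in the paper's toolkit (Harris is already used in \refL{L1}), but it must be invoked explicitly. The paper sidesteps the issue entirely: it never conditions on maximality, but sums over all components $A$ with $|A|>\ga n$ --- given only that $A$ is a component, the rest is exactly $\cG(n-|A|,p)$ --- and controls the number $N$ of such large components by a second-moment bound, concluding that \whp{} $\ccc1^k\le\sknp\le\ccc1^k+Z_k$ with $\E Z_k$ of the right order. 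That device buys both exact conditional independence and, via \refL{Lsuper}, the uniformity in $np$ that your proposal currently lacks.
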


In particular, then $\sus(\gnp)\simp n\rho(np)^2$.
We first prove a technical lemma.

\begin{lemma}\label{Lsuper}
  There exists a function $\ga:(1,\infty)\to(0,1)$ such that the
  following holds, for some $c>0$:
  \begin{romenumerate}
\item\label{Lsuperx} For any $p=p(n)$ with $np-1\gg n\qqqw$,
\whp{} $\ccc1>\ga(np) n$.
\item\label{Lsupera}
If $1<\gl\le2$, then $\gl(1-\ga(\gl)) \le 1-c(\gl-1)$.
\item\label{Lsuperb}
If $\gl\ge2$, then $\gl(1-\ga(\gl)) \le 1-c$.
\item\label{Lsuperm}
For each $m\ge0$, $1-\ga(\gl)=O(\gl^{-m})$.
  \end{romenumerate}
\end{lemma}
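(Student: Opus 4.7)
The plan is to define $\ga(\gl)$ explicitly in terms of the survival probability $\rho(\gl)$, with a small constant $\eta>0$ to be fixed at the end, and then verify the four properties in turn. I would set
\[
\ga(\gl)\=\rho(\gl)\bigpar{1-\eta\sqrt{1-\rho(\gl)}}, \qquad \gl>1,
\]
and write $\gb(\gl)\=\eta\rho(\gl)\sqrt{1-\rho(\gl)}$, so $\ga=\rho-\gb$ and $0<\ga<\rho<1$; throughout the proof I would exploit the identity $\gl(1-\ga)=\gl(1-\rho)+\gl\gb$.

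The analytic parts \ref{Lsupera}--\ref{Lsuperm} are routine. For \ref{Lsuperm}, use $1-\ga\le 2\sqrt{1-\rho}$ together with $1-\rho(\gl)=e^{-\gl\rho(\gl)}\le e^{-\gl/2}$ once $\rho(\gl)\ge1/2$, giving $1-\ga=O(e^{-\gl/4})$. For \ref{Lsupera}, the expansion $\rho(\gl)=2(\gl-1)+O((\gl-1)^2)$ near $\gl=1$ yields $\gl(1-\rho)=1-(\gl-1)+O((\gl-1)^2)$ and $\gl\gb=2\eta(\gl-1)+O((\gl-1)^2)$, so that
\[
\gl(1-\ga)=1-(1-2\eta)(\gl-1)+O\bigpar{(\gl-1)^2};
\]
for $\eta\le1/4$ this gives the bound on a right neighbourhood of $1$, and continuity/compactness extends it to all of $(1,2]$. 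For \ref{Lsuperb}, put $M\=\sup_{\gl\ge2}\gl(1-\rho(\gl))$; this supremum is strictly less than $1$ because $\gl(1-\rho(\gl))<1$ on $(1,\infty)$, tends to $0$ as $\gl\to\infty$, and is continuous. Simultaneously, $\gl\gb$ is bounded on $[2,\infty)$ since $\gl^2(1-\rho)\le\gl M$ for moderate $\gl$ and $\gl^2e^{-\gl/2}=O(1)$ for large $\gl$. Choosing $\eta$ small enough gives $\gl(1-\ga)\le(1+M)/2<1$.

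The substantive content is \ref{Lsuperx}, and this is where I expect the main difficulty. The classical supercritical theory (see \eg{} \cite[Chap.~5]{JLR}) yields $\ccc1=n\rho(np)(1+\op(1))$ together with quantitative fluctuation bounds: the standard deviation of $\ccc1/n$ about $\rho(np)$ is of order $(n(np-1))\qqw$ in the barely-supercritical regime and $\sqrt{(1-\rho(np))/n}$ once $np$ is bounded away from $1$. The key virtue of the choice $\gb=\eta\rho\sqrt{1-\rho}$ is that it dominates these fluctuations uniformly: for $np\to1^+$, $\gb\asymp\eta(np-1)\gg(n(np-1))\qqw$ is precisely the hypothesis $n(np-1)^3\to\infty$; for $np$ bounded, $\gb$ is bounded below while the fluctuation is $O(n\qqw)$; for $np\to\infty$ with $np=O(\log n)$, $\gb\sim\eta e^{-np/2}$ beats $\sqrt{(1-\rho)/n}\asymp e^{-np/2}/\sqrt n$ by a factor $\eta\sqrt n$. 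The delicate subcase is $np\asymp\log n$, where the non-giant part is essentially a Poisson-like population of isolated vertices of mean $\sim ne^{-np}$; here $n-\ccc1=\Op(1+ne^{-np})$, which is still dominated by $n\gb\sim\eta ne^{-np/2}$. For $np\ge(1+\eps)\log n$ the graph is connected \whp{} so $\ccc1=n>\ga n$ trivially. Assembling these regimes and choosing $\eta$ small enough to be compatible with \ref{Lsupera} and \ref{Lsuperb} completes the proof.
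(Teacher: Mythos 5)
Your construction $\ga(\gl)=\rho(\gl)\bigl(1-\eta\sqrt{1-\rho(\gl)}\bigr)$ is a legitimate choice, and your verifications of \ref{Lsupera}, \ref{Lsuperb} and \ref{Lsuperm} are correct. For \ref{Lsuperx} in the regime where $np$ stays bounded, your argument is also sound: there $\rho-\ga=\eta\rho\sqrt{1-\rho}\asymp\rho$, so the classical law of large numbers $\ccc1\simp n\rho(np)$ (valid for $np-1\gg n\qqqw$, and indeed in \cite[Chap.~5]{JLR}) already gives $\ccc1>n\ga$ \whp; no CLT is needed.

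The gap is in the regime $np\to\infty$. There your $\ga$ is exponentially close to $1$, namely $1-\ga\asymp e^{-np/2}$, so verifying $\ccc1>n\ga$ \whp{} amounts to showing $n-\ccc1=o\subp\bigl(ne^{-np/2}\bigr)$. You assert $n-\ccc1=\Op(1+ne^{-np})$ as a consequence of ``classical supercritical theory,'' but this is not a consequence of the LLN $\ccc1\simp n\rho(np)$, nor does it follow from the cited reference across the whole range $np\to\infty$ (uniformly through $np\asymp\log n$). Controlling $n-\ccc1$ requires first knowing which component is the giant, so even the first-moment bound $\E[n-\ccc1]=O(ne^{-np})$ is not a one-line calculation; a Markov bound on the naive count of ``vertices in components of size $<n/2$'' only helps after one has independently established that a component of size $\ge n/2$ exists \whp, which is the very thing at issue. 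In short, your proof of \ref{Lsuperx} for large $np$ invokes a sharp quantitative fact that itself needs a proof.

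The paper avoids this entirely by not insisting on a single closed formula. It takes $\ga(\gl)=(1-\eps)\rho(\gl)$ on a bounded range $1<\gl\le M$, where the plain LLN suffices, and for $\gl>M$ switches to $\ga(\gl)=1-\exp(2\gam-\gam\gl/2)$ with $\gam<\rho(2)$, proving \ref{Lsuperx} there by a two-round exposure: first reveal $G(n,2/n)$, which \whp{} contains a component of size $\ge\gam n$; then reveal the remaining edges and observe that the number of vertices not attached to that component is stochastically dominated by $\Bi\bigl(n,\exp(-\gam n(p-2/n))\bigr)$, so a binomial tail/Markov bound gives $n-\ccc1< n\exp(2\gam-\gam\gl/2)$ \whp. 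This uses only the LLN for the single fixed parameter $np=2$ plus an elementary tail bound, and is the elementary replacement for the fluctuation estimates your sketch presupposes. To complete your proof you would need to supply an argument of this kind (or a genuine reference for $n-\ccc1=\Op(ne^{-np})$ uniformly for $np\to\infty$) for the large-$np$ case.
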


\begin{proof}
For any fixed $M>1$, we can take $\ga(\gl)=(1-\eps)\rho(\gl)$ for
$1<\gl\le M$, if $\eps$ is sufficiently small. 
This choice satisfies \ref{Lsuperx}, 
in this range
\ref{Lsuperm} is trivial, and 
it is easily seen that \ref{Lsupera} and \ref{Lsuperb} follow
(provided $\eps$ is small enough)
from the
facts that $\rho(\gl)\sim 2(\gl-1)$ and 
$\gl(1-\rho(\gl))=1-(\gl-1)+O(\gl-1)^2$ as $\gl\downto1$,
and $\gl(1-\rho(\gl))<1$ for $\gl>1$. (All three are easily verified by
writing \eqref{rho} as $\gl=-\log(1-\rho)/\rho$.)

For large $\gl$, we argue as follows. Take $\gam<\rho(2)$. Thus, \whp{}
$G(n,2/n)$ has a giant component of order at least $\gam n$. For $\gl=np>2$,
construct $G(n,p)$ by the usual two-round method:
first take $G(n,2/n)$ and then add further edges independently
in a second round with probabilities $p-2/n$ (or, to be precise,
$(np-2)/(n-2)>p-2/n$). If we obtain a component of order at least
$\gam n$
in the first round, then the probability that a given vertex will \emph{not}
be joined to this component in the second round is less than
$\exp(-\gam n(p-2/n))=\exp(2\gam-\gam\gl)$. Hence, \whp{} the number of such
vertices is less than $n\exp(2\gam-\gam\gl/2)$; for $\gl=O(1)$ by
concentration of the binomial distribution and for $\gl\to\infty$ by
Markov's inequality. Consequently, there is \whp{} a component with
more than $n-n\exp(2\gam-\gam\gl/2)$ vertices; hence \ref{Lsuperx} holds
with $\ga(\gl)=1-\exp(2\gam-\gam\gl/2)$. This $\ga$ satisfies \ref{Lsuperm}
too, and \ref{Lsuperb} for large enough $\gl$. We thus can use this $\ga$
for $\gl> M$ for some large $M$, and the first construction for
smaller $\gl$.
\end{proof}

\begin{proof}[Proof of \refT{Tsuper}]
  Let $\ga=\ga(np)$ be as in \refL{Lsuper}, and use the notation of the proof
  of \refL{L1}. 
Let $N$ be the number of components of size $>\ga n$ in \gnp{}
(thus \whp{} $N\ge1$ by \refL{Lsuper}\ref{Lsuperx}), and let
\begin{equation*}
  Z_k\=\sum_{|A|>\ga n}\sum_{B\cap A=\emptyset} |B|^k I_AI_B.
\end{equation*}
Then
\begin{equation}\label{a4}
\E  Z_k\=\E\sum_{|A|>\ga n} I_A\E \sk(n-|A|,p)
\le \E N\E\sk\nap.
\end{equation}

If $1<np\le2$, then by \refL{Lsuper}\ref{Lsupera}, 
$(\nna)p\le np(1-\ga)\le 1-c(np-1)$, and thus by \refL{Lesk},
\begin{equation*}
  \E\sknap = O\parfracc{\nna}{(np-1)^{2k-3}}
= O\parfracc{n}{(np-1)^{2k-3}}.
\end{equation*}
If instead $np>2$, then by \refL{Lsuper}\ref{Lsuperb}, 
$(\nna)p\le np(1-\ga)\le 1-c$, and thus by Lemmas \ref{Lesk} and
\ref{Lsuper}\ref{Lsuperm}, with $m=2k-3$,
\begin{equation*}
  \E\sknap = O(\nna)
= O\parfracc{n}{(np)^{2k-3}}.
\end{equation*}
Hence, for all $np$, 
\begin{equation}\label{a7}
  \E\sknap 
= O\parfracc{n}{(np-1)^{2k-3}}
=o\bigpar{n^{2k/3}}.
\end{equation}

Note first that $Z_k\ge N(N-1)\ga^k$. Hence, by \eqref{a4} and \eqref{a7},
\begin{equation*}
  \E N(N-1)\le \ga^{-k}\E Z_k \le o(\E N n^{2k/3}\ga^{-k})=o(\E N).
\end{equation*}
Since $N\le 1+N(N-1)$, it follows that $\E N(N-1)=o(1)$ and $\E N=O(1)$;
hence \eqref{a4} and \eqref{a7} yield $\E Z_k=O(n/(np-1)^{2k-3})$.
By \refL{Lsuper}\ref{Lsuperx}, \whp{} $\ccc1>\ga n$; in this case, 
$\ccc1^k\le\sknp\le\ccc1^k+ Z_k$, and the result follows.
\end{proof}

\section{The critical case}\label{Acritical} 

The critical case is $np=1+O(n\qqqw)$. By considering subsequences, it
suffices to consider the case $n\qqq(np-1)\to \tau$ for some
$\tau\in(-\infty,\infty)$, \ie, $np=1+(\tau+o(1)) n\qqqw$.

We continue to use the notations of \refApp{Asuper}.
It is well-known that in the critical case, $\ccc1$ is of the order
$n\qqa$, in the sense that $\ccc1/n\qqa$ converges in distribution to
some non-degenerate random variable, and the same holds for $\ccc2$,
$\ccc3$, \dots
Moreover,
\citet{Aldous} has shown that, with notations as in \refApp{Asuper},
the sequence $(n\qqaw\ccc1,n\qqaw\ccc2,\dots)$ (extended by an
infinite number of 0's) 
converges in distribution to a certain random sequence
$(\ct(1),\ct(2),\dots)$ that can be described as the sequence of
excursion lengths of a certain reflecting Brownian motion with
inhomogeneous drift (depending on $\tau$) that is defined in
\cite{Aldous}.
The convergence is in the $\ell^2$-topology, and thus immediately
implies convergence of the sums of squares. Moreover, convergence in
$\ell^2$ implies convergence in $\ell^k$ for every $k\ge2$, and thus
we also have convergence of the sums of $k$th powers.
Consequently, 
\begin{theorem}
If $np=1+(\tau+o(1)) n\qqqw$ with $-\infty<\tau<\infty$, then for
every $k\ge2$,
\begin{equation*}
  n^{-2k/3}\sk(n,p)\dto W_k\=\sum_i \ct(i)^k.
\end{equation*}
\end{theorem}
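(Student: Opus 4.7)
The plan is to deduce the theorem directly from Aldous's $\ell^2$-convergence result \cite{Aldous} via two applications of the continuous mapping theorem. Set $\mathbf{X}^{(n)} \= (n\qqaw\ccc1, n\qqaw\ccc2, \ldots)$ (with the components ordered by decreasing size and extended by zeros) and $\mathbf{X}^\tau \= (\ct(1), \ct(2), \ldots)$; the theorem cited in the paragraph preceding the statement asserts that $\mathbf{X}^{(n)} \dto \mathbf{X}^\tau$ in $\ell^2$.

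The first step is to upgrade this to convergence in $\ell^k$ for each $k\ge2$. For any $z\in\ell^2$ one has $|z_i|\le\|z\|_2$ for every $i$, and hence
\begin{equation*}
\|z\|_k^k = \sum_i |z_i|^k \le \|z\|_2^{k-2}\sum_i|z_i|^2 = \|z\|_2^k,
\end{equation*}
so $\|z\|_k\le\|z\|_2$. Thus the inclusion $\ell^2\hookrightarrow\ell^k$ is a contraction, in particular continuous, and a first use of the continuous mapping theorem gives $\mathbf{X}^{(n)}\dto\mathbf{X}^\tau$ in $\ell^k$.

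Next, the functional $\Phi_k(x)\=\sum_i |x_i|^k = \|x\|_k^k$ is continuous on $\ell^k$, since it is the $k$-th power of the norm. A second application of the continuous mapping theorem then yields
\begin{equation*}
n^{-2k/3}\sk(n,p) = \sum_i\bigpar{n\qqaw\ccci}^k = \Phi_k\bigpar{\mathbf{X}^{(n)}} \dto \Phi_k\bigpar{\mathbf{X}^\tau} = \sum_i \ct(i)^k = W_k,
\end{equation*}
which is the desired conclusion. There is really no obstacle to speak of: all the substantive work is packaged into the cited Aldous theorem, and the present proof reduces to the two elementary continuity observations above. The only point worth a sanity check is that $W_k=\sum_i\ct(i)^k$ is finite almost surely, but this follows from the same bound $\|\mathbf{X}^\tau\|_k\le\|\mathbf{X}^\tau\|_2<\infty$ a.s.\ together with the fact that $\mathbf{X}^\tau$ lies in $\ell^2$ a.s.\ by Aldous's description.
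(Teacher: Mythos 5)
Your proof is correct and takes essentially the same route as the paper: Aldous's $\ell^2$-convergence, the norm inequality $\norm{\cdot}_k\le\norm{\cdot}_2$ to pass from $\ell^2$ to $\ell^k$, and continuity of the $k$-th power sum on $\ell^k$. The paper simply states these steps without spelling out the two applications of the continuous mapping theorem, which you have written out carefully.
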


Note that we here have limits that are non-degenerate random variables
and not constants, unlike the subcritical and supercritical cases
where $\sk(n,p)\simp a_n$ for a suitable sequence $a_n$.

\begin{remark}
\citet{SJ179} give a related description 
of the limit of the component
sizes as a point process $\Xitau$ on $(0,\infty)$. It follows that we
also have $W_k=\int_0^\infty x^k\dd\Xitau(x)$, and thus
$\E W_k=\int_0^\infty x^k\dd\Latau(x)$, where $\Latau$ is the
intensity of $\Xitau$ given in \cite[Theorem 4.1]{SJ179}.  
\end{remark}

\newcommand\AAP{\emph{Adv. Appl. Probab.} }
\newcommand\JAP{\emph{J. Appl. Probab.} }
\newcommand\JAMS{\emph{J. \AMS} }
\newcommand\MAMS{\emph{Memoirs \AMS} }
\newcommand\PAMS{\emph{Proc. \AMS} }
\newcommand\TAMS{\emph{Trans. \AMS} }
\newcommand\AnnMS{\emph{Ann. Math. Statist.} }
\newcommand\AnnPr{\emph{Ann. Probab.} }
\newcommand\CPC{\emph{Combin. Probab. Comput.} }
\newcommand\JMAA{\emph{J. Math. Anal. Appl.} }
\newcommand\RSA{\emph{Random Struct. Alg.} }
\newcommand\ZW{\emph{Z. Wahrsch. Verw. Gebiete} }
\newcommand\DMTCS{\jour{Discr. Math. Theor. Comput. Sci.} }

\newcommand\AMS{Amer. Math. Soc.}
\newcommand\Springer{Springer}
\newcommand\Wiley{Wiley}

\newcommand\vol{\textbf}
\newcommand\jour{\emph}
\newcommand\book{\emph}
\newcommand\inbook{\emph}
\def\no#1#2,{\unskip#2, no. #1,} %(typeset after year) 
\newcommand\toappear{\unskip, to appear}

\newcommand\webcite[1]{%\hfil  %???
   %\penalty0 %???
\texttt{\def~{{\tiny$\sim$}}#1}\hfill\hfill}
\newcommand\webcitesvante{\webcite{http://www.math.uu.se/~svante/papers/}}
\newcommand\arxiv[1]{\webcite{arXiv:#1.}}

\def\nobibitem#1\par{}

\end{document}